\newtheorem{theorem}{Theorem}[section]
\newtheorem{corollary}{Corollary}[theorem]
\newtheorem{lemma}[theorem]{Lemma}
\newtheorem{proposition}[theorem]{Proposition}
\newtheorem{definition}[theorem]{Definition}
\newtheorem{example}[theorem]{Example}
\newtheorem{remark}[theorem]{Remark}
\theoremstyle{definition}
\crefname{thm}{Thm}{Thms}
\newcommand*\circled[1]{\tikz[baseline=(char.base)]{
            \node[shape=circle,draw,inner sep=0.5pt] (char) {\footnotesize #1};}}
\DeclareMathOperator*{\argmin}{arg\,min}
\def\a{\mathbf{a}}
\def\c{\mathbf{c}}
\def\o{\mathbf{o}}
\def\I{\mathbf{I}}
\def\R{\mathbb{R}}
\begin{document}

\title{Filtration learning in exact multi-parameter persistent homology and classification of time-series data}

\author{Keunsu Kim}
\address{
Department of Mathematics \& POSTECH MINDS (Mathematical Institute for Data
 Science), Pohang University of Science and Technology, Pohang 37673, Korea
}
\email{keunsu@postech.ac.kr}

\author{Jae-Hun Jung}
\address{
Department of Mathematics \& POSTECH MINDS (Mathematical Institute for Data
 Science), Pohang University of Science and Technology, Pohang 37673, Korea
}
\email{jung153@postech.ac.kr}


\pagenumbering{arabic}

\begin{abstract}
To analyze the topological properties of the given discrete data, one needs to consider a continuous transform called filtration. Persistent homology serves as a tool to track changes of homology in the filtration. The outcome of the topological analysis of data varies depending on the choice of filtration, making the selection of filtration crucial. Filtration learning is an attempt to find an optimal filtration that minimizes the loss function. Exact Multi-parameter Persistent Homology (EMPH) has been recently proposed, particularly for topological time-series analysis, that utilizes the exact formula of rank invariant instead of calculating it. In this paper, we propose a framework for filtration learning of EMPH. We formulate an optimization problem and propose an algorithm for solving the problem. We then apply the proposed algorithm to several classification problems. Particularly, we derive the exact formula of the gradient of the loss function with respect to the filtration parameters, which makes it possible to directly update the filtration without using automatic differentiation, significantly enhancing the learning process.
\end{abstract}

\keywords{Filtration learning, Topological time-series analysis, Optimization problem} 
\subjclass{37M10, 55N31, 65K10}

\maketitle

\section{Introduction}
Topological Data Analysis (TDA) is an analysis method that infers the inherent shape of data from sampled data. Sampled data is typically given discrete, so it is necessary to transform the given discrete data into continuous data, as shown in Figure \ref{fig:filtration}. This transform is referred to as filtration. Persistent homology is a primary tool of TDA, utilizing barcode and persistence diagram that capture the homology changes during the filtration.

\begin{figure}[h]
    \centering
    \includegraphics[width=1 \linewidth]{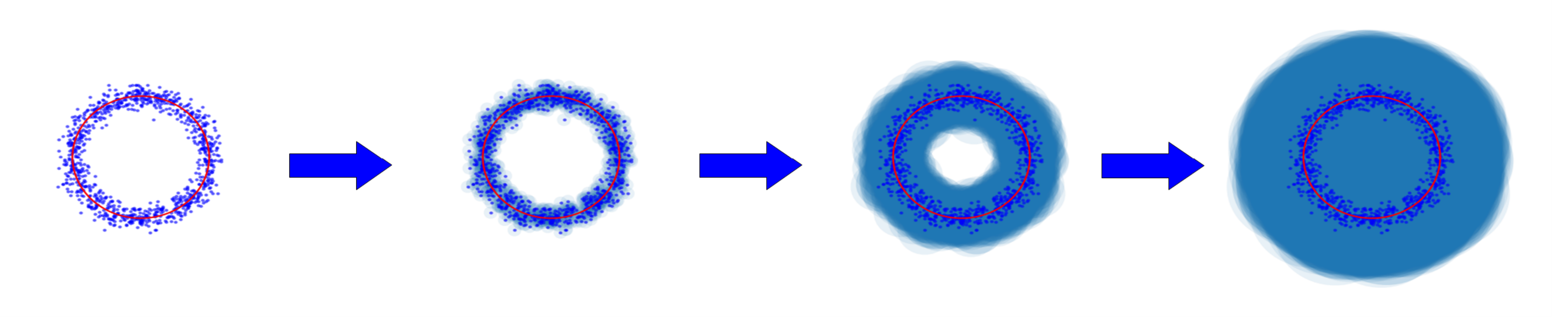}
    \caption{Offset filtration of the point cloud. The figure shows how the given discrete data forms a continuous object through the filtration.}
    \label{fig:filtration}
\end{figure}

\textbf{Topological data analysis and machine learning:} Over the past decade, there has been continuous progress in integrating TDA and Machine Learning (ML). Such integrations are generally composed of the following three steps: 1) For the given data set, the first step considers a filtration corresponding to specific complexes such as Vietoris-Rips complex, cubical complex, DTM filtration etc. and extracts a barcode, which is the summary of the topological and geometric features of the data. 2) The second step transforms the barcode obtained from the first step into vectors using the persistence image, persistence landscape, etc., which are now suitable for ML. 3) The last step takes the transformed vectors and feed them into the ML model for training.

\begin{figure}[h]
    \centering
\includegraphics[width=1 \linewidth]{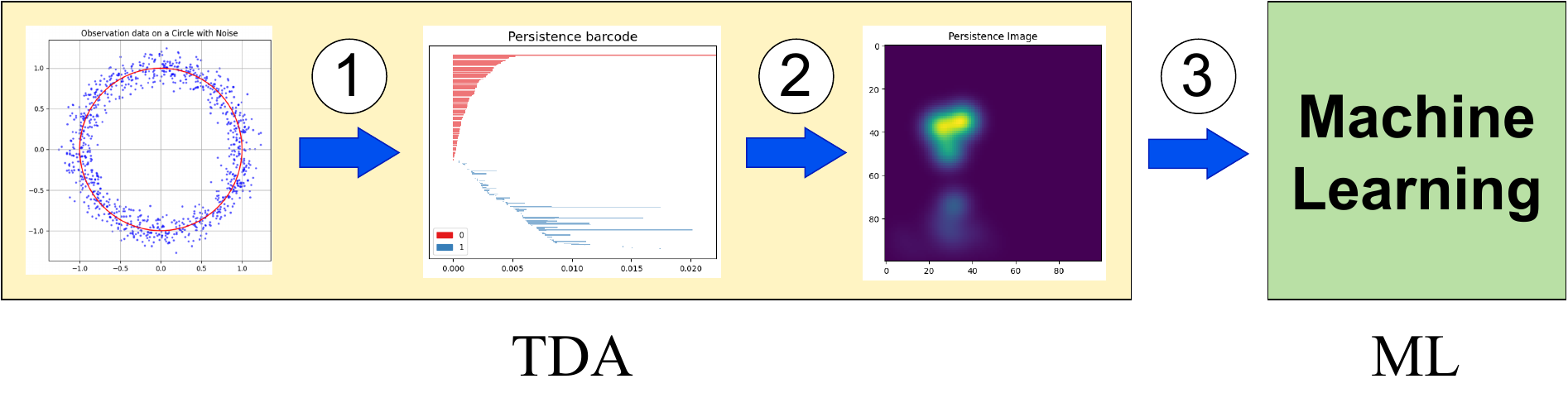}
    \caption{Schematic illustration of the three steps integrating TDA and ML with the given raw data (left). The blue arrow indicates each step.}
\end{figure}

Step \circled{1}: Since topological analysis is intrinsically designed for continuous objects, it is necessary to transform the given discrete data into continuous data, as shown in Figure \ref{fig:filtration}. This transformation is referred to as filtration. Therefore, extracting the topological information of the data depends on the choice of filtration. Commonly used filtrations include the Vietoris-Rips complex, alpha complex, and cubical complex.

Step \circled{2}: The main outcome of persistent homology is represented by barcode, but it lacks a natural vector space structure. Therefore, we need to rearrange or transform the barcode into a vector form in a Hilbert space. This process is called the vectorization of barcode. The persistence landscape \cite{bubenik2015statistical} and persistence image \cite{adams2017persistence} are among the well-known methods for the vectorization of barcode.

Step \circled{3}: This step deals with the main procedure in ML. In this paper, we propose a consistent algorithm of TDA combined with ML that minimizes a task-dependent loss function using the transformed vector calculated in Step \circled{2}.

\textbf{Filtration learning:} Filtration learning is a process in which ML algorithms are involved in Step \circled{1}. For this, it is necessary for each step as a function to be smooth, at least locally Lipschitz. It is also important to ensure convergence when updating each step using backpropagation. One challenge is that the space of barcodes lacks a natural smooth structure. In \cite{bruel2020topology, hofer2020graph, horn2021topological}, it is shown that filtration learning is a smooth process under restricted conditions and it can be applied to ML problems. \cite{leygonie2021framework} proposed a smooth structure on the barcode space and the mentioned studies can be explained within this framework. \cite{carriere2021optimizing} demonstrated the convergence of stochastic gradient descent for loss functions based on persistent homology. \cite{nishikawa2024adaptive} considered filtration learning for the weight of the point in a point cloud.

Before the filtration learning was introduced in the realm of TDA, the design of filtration was based on human understanding and perception, yet filtration learning automatically designs the optimized filtration. TDA with all three steps requires heavy computation as all these steps should be recalculated at every epoch of backpropagation.

\textbf{Exact Multi-parameter Persistent Homology:} Topological time-series analysis differs from traditional statistical time-series analysis. The assumption of topological time-series analysis is that the time-series data are the observations from a dynamical system, and that it involves analyzing the time-series data by examining the topological information of the dynamical system \cite{takens2006detecting, perea2019topological}. In statistics, time series data $\left\{ z_t \right\}_{t \in \mathbb{Z}_{\ge 0}}$ is a realized value of a certain random variable $\left\{ X_t \right\}_{t \in \mathbb{Z}_{\ge 0}}$ \cite{brockwell2002introduction}. Exact Multi-parameter Persistent Homology (EMPH) is one of the topological time-series analysis proposed in \cite{kim2022exact}. Given time-series data, EMPH considers Fourier decomposition and regards each Fourier mode as an uncoupled harmonic oscillator, proposing the analysis based on the Liouville torus. One of the advantages of the proposed method is its ability to provide the exact formula of persistent homology. Traditional topological time-series analysis, such as the one using the sliding window embedding is computationally expensive and the resulting topological measures are difficult to interpret. However, EMPH equipped with the derived exact formula, calculates persistent homology promptly. Furthermore, by constructing multi-parameter persistent homology for the Liouville torus, we can explore various topological inferences through the selection of one-parameter filtration within the multi-parameter space. 

\textbf{Contributions:} In this paper, we propose an optimization problem for filtration learning with EMPH. We provide a detailed algorithm for filtration learning and a proof of its convergence.

Our main contributions can be summarized as follows: 

\begin{enumerate}
    \item We demonstrated the existence of a filtration curve that minimizes the loss function in the multi-parameter space (Theorem \ref{thm:variation principle} in Section \ref{subsec:smoothness}). Also, we formulated a constrained optimization problem to find such a filtration curve and provided the algorithm (\eqref{eq:objective2} in Section \ref{subsec:curved} and Algorithm \ref{alg:filtration main} in Section \ref{sec:examples}).

    \item The vectorization of multi-parameter persistent homology focuses on multiple rays in the multi-parameter space, whereas our proposed method focuses on an optimal curve when constructing EMPH (see Remark \ref{rmk:comparison} in Section \ref{subsec:curved}). This approach can be generalized to an $n$-parameter space ($n \ge 2$) in a calculable manner and yield interpretable results. Additionally, we applied filtration learning to synthetic data and achieved improved accuracy (Examples \ref{Ex:toy1} and \ref{Ex:toy2}). We also compared classification accuracy on a benchmark dataset in Example \ref{example:ucr}. With these experiments, the effectiveness of the proposed method of finding an optimal filtration curve in the multi-parameter space is demonstrated. Additionally, we provided the example of the multi-parameter persistence image and landscape in Appendix \ref{app:vectorization}.

    \item We derived the exact gradient formula of the loss function with respect to the filtration parameters (Corollary \ref{cor:filtration curve} in Section \ref{subsec:curved}). This allows us to update the filtration directly without using automatic differentiation, thereby accelerating the backpropagation in filtration learning. Its efficiency is demonstrated in Example \ref{Ex:time} in Section \ref{sec:examples}.
\end{enumerate}

\textbf{Outline:} Section \ref{sec:Prerequisite} reviews the basics of topological time-series analysis, EMPH theory, a smooth structure in the barcode space and the persistence image. Section \ref{sec:main}, proposes an optimization problem aimed at finding an optimized curve within the EMPH framework. We compare the multi-parameter persistence image and landscape with our method. In Section \ref{sec:examples}, we applied our theory to synthetic and benchmarking data and showed the effectiveness of the proposed mehtod. Section \ref{sec:conclusion} provides a concluding remark and future work.

\section{Background}
\label{sec:Prerequisite}

\begin{subsection}{Basic topological time-series analysis}
\label{subsec:topological time-series analysis}
Basic assumption in topological time-series analysis is that time-series data $\left\{ z_n \right\}_{n \in \mathbb{Z}_{\ge 0}}$ arises from a dynamical system $(\mathcal{M},\phi)$. Here, $\mathcal{M}$ is a smooth manifold that encompasses all possible states, $\phi : \mathcal{M} \rightarrow \mathcal{M}$ is a diffeomorphism describing the state dynamics, and $y : \mathcal{M} \rightarrow \R$ is a smooth function representing measurements. In this setting, our assumption is summarized as follows: $z_n$ is expressed by $y(\phi^n(z_0))$ \cite{takens2006detecting, perea2019topological}. Under this assumption, various characteristics of the dynamical system are reflected in the time-series data. Topological time-series analysis examines the time-series data by focusing on topological features of the dynamical system.

The traditional approach to time-series data analysis in TDA is to translate time-series data into a point cloud via sliding window embedding, then apply the filtration via Vietoris-Rips complex and calculate persistent homology (see Figure \ref{fig:Sliding_pipeline}). A periodicity score, which quantifies the periodicity of the time-series data, is proposed in \cite{perea2015sliding}. This approach utilizes one-dimensional persistence information (the duration of homology) from the trajectory in a dynamical system.
\begin{figure}[h]
    \centering
    \includegraphics[width=1 \linewidth]{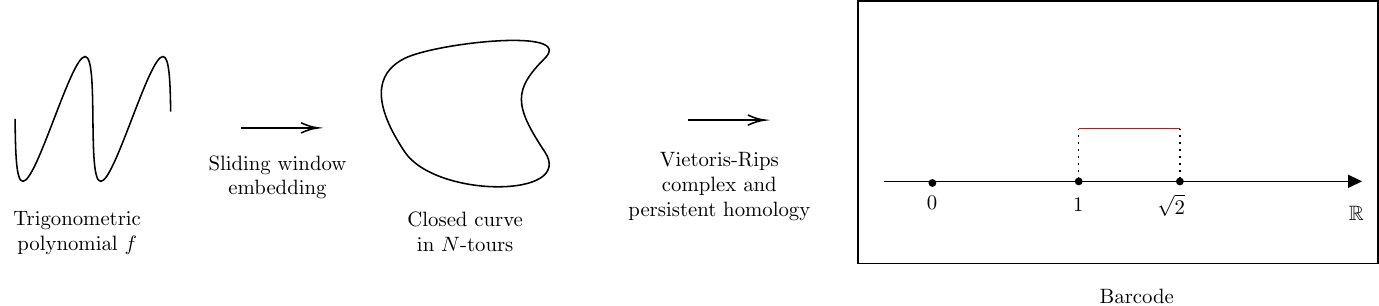}
    \caption{Standard approach of time-series analysis in TDA.}
    \label{fig:Sliding_pipeline}
\end{figure}

\begin{definition}[Sliding window embedding, \cite{perea2015sliding, packard1980geometry}]
For a (periodic) time-series data $f : \R / 2\pi\mathbb{Z} \rightarrow \R$, sliding window embedding $SW_{D,\tau}f$ of $f$ is a point cloud in $\R^{D+1}$ defined by
\begin{equation*}
SW_{D,\tau}f:= \left\{ \begin{bmatrix} f(t) \\ f(t+\tau) \\ \vdots \\ f(t+D\tau) \end{bmatrix} \in \mathbb{R}^{D+1} : t \in \R / 2\pi\mathbb{Z} \right\}.    
\end{equation*}
\end{definition}

\begin{definition}[Vietoris-Rips complex, \cite{adamaszek2017vietoris}]
Given a metric space $(X,d)$, the Vietoris-Rips complex of $(X,d)$ is a filtration of simplicial complexes $\mathcal{R}(X) = \left\{\mathcal{R}_{\epsilon}(X)\right\}_{\epsilon \in \mathbb{R}}$, where
$\mathcal{R}_{\epsilon}(X) := \bigl\{ \left\{ x_{0}, \cdots, x_{l} \right\} \subset P : l \in \mathbb{Z}_{\ge 0} \text { and }\bigr.$ $\bigl. \max\limits_{0 \le i,j \le n} d(x_{i},x_{j}) < \epsilon \bigr\}$.
\end{definition}
Sliding window embedding is motivated by Takens' embedding theorem \cite{takens2006detecting}. This mapping translates time-series data into a point cloud. We then apply the filtration with Vietoris-Rips complex and obtain a barcode. The Vietoris-Rips complex is a popular one-parameter filtration. A one-parameter filtration $\mathcal{X} = \left\{ X_i \right\}_{i \in \R}$ is a family of topological spaces with $X_i \subseteq X_j$ for $i \le j \in \R$.
\begin{proposition}[\cite{latschev2001vietoris}]
\label{prop:rips2}
Let $X$ be a sampled point cloud on a closed Riemannian manifold $\mathcal{M}$, with the Gromov-Hausdorff distance between $X$ and $\mathcal{M}$ being sufficiently close. Then, for a sufficiently small $\epsilon > 0$, $\mathcal{R}_{\epsilon}(X)$ is homotopic to $\mathcal{M}$.
\end{proposition}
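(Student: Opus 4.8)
The statement is Latschev's theorem, and I would prove it by the Hausmann--Latschev strategy: construct explicit maps between the Vietoris--Rips complex of $P$ and $\mathcal{M}$ and check that they are mutually inverse up to homotopy. First fix the geometric data of the closed Riemannian manifold: a lower bound $r_{\mathrm{inj}}>0$ on its injectivity radius, an upper bound $\kappa$ on the absolute value of its sectional curvature, and the resulting convexity radius $\rho>0$; let $r_0=r_0(r_{\mathrm{inj}},\kappa)>0$ be a threshold below which geodesic balls are geodesically convex and any finite set of pairwise $\epsilon$-close points (with $\epsilon<r_0$) lies in such a ball and therefore has a well-defined, smoothly varying weighted geodesic center of mass (Karcher mean). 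From the hypothesis that the Gromov--Hausdorff distance between $P$ and $\mathcal{M}$ is small, extract a density parameter $\delta>0$ so that, after identifying $P$ with a $\delta$-dense subset of $\mathcal{M}$ (the gap between intrinsic and ambient distances at scales below $r_{\mathrm{inj}}$ being absorbed into $\delta$, since they are bi-Lipschitz comparable there), every point of $\mathcal{M}$ lies within $\delta$ of a point of $P$.

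Next, with $\epsilon<r_0$ and $2\delta<\epsilon$, define $\Phi\colon |\mathcal{R}_\epsilon(P)|\to\mathcal{M}$ on a point with barycentric coordinates $(t_j)$ supported on a simplex $\{p_{i_0},\dots,p_{i_k}\}$ by sending it to the Karcher mean of $p_{i_0},\dots,p_{i_k}$ with weights $t_j$; since the vertices of any simplex of $\mathcal{R}_\epsilon(P)$ are pairwise within $\epsilon<r_0$, this is well defined, continuous, and compatible with face inclusions. In the other direction choose a partition of unity $\{\psi_p\}_{p\in P}$ subordinate to the cover $\{B(p,\delta)\}_{p\in P}$ of $\mathcal{M}$ and set $\Psi(x)=\sum_{p}\psi_p(x)\,p$; if $\psi_p(x)$ and $\psi_q(x)$ are both nonzero then $d(p,q)<2\delta<\epsilon$, so the active vertices span a simplex of $\mathcal{R}_\epsilon(P)$ and $\Psi$ is a well-defined continuous map $\mathcal{M}\to|\mathcal{R}_\epsilon(P)|$. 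One then checks that $\Phi\circ\Psi$ is homotopic to $\mathrm{id}_{\mathcal{M}}$ along the geodesic from $x$ to $\Phi(\Psi(x))$, which is unique because $\Phi(\Psi(x))$ stays within $O(\delta)<r_{\mathrm{inj}}$ of $x$, and that $\Psi\circ\Phi$ is homotopic to the identity of $|\mathcal{R}_\epsilon(P)|$ via the linear homotopy inside each simplex, provided the union of all vertices involved always spans a simplex of $\mathcal{R}_\epsilon(P)$.

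The conceptual content is routine; the obstacle is the simultaneous calibration of $\epsilon$ and $\delta$. Each of the four ingredients --- convexity of balls, definedness of the Karcher mean, the inequality $2\delta<\epsilon$, and the two homotopy checks --- is valid only in an explicit window for $\epsilon$ depending on $r_{\mathrm{inj}}$, $\kappa$, $\rho$, and $\delta$, and one must arrange these windows to overlap, which forces $P$ to be dense enough; the sharpest constraint is the last one, since a crude triangle-inequality estimate mixing the $\epsilon$-cliques of $\mathcal{R}_\epsilon(P)$ with the $\delta$-cliques produced by $\Psi$ only controls the relevant diameters up to roughly $\epsilon+\delta$, so keeping the homotopy track inside $\mathcal{R}_\epsilon(P)$ rather than inside $\mathcal{R}_{c\epsilon}(P)$ requires a sharper bound on how near the Karcher mean sits to the generating vertices, or a rescaling of the parameter --- this is exactly the step that pins down the admissible range of $\epsilon$ in Latschev's theorem. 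Once it is in place $\Phi$ and $\Psi$ are homotopy inverses, so $|\mathcal{R}_\epsilon(P)|\simeq\mathcal{M}$ for $\epsilon$ in that range; a complementary check via the nerve lemma --- the balls $B(p,\epsilon)$ form a good cover of $\mathcal{M}$, whence its nerve $\mathcal{C}_\epsilon(P)$ satisfies $|\mathcal{C}_\epsilon(P)|\simeq\mathcal{M}$ --- recovers the same conclusion, and the same maps prove the stronger statement in which $P$ is an arbitrary metric space Gromov--Hausdorff close to $\mathcal{M}$.
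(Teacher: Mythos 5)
The paper does not prove this proposition at all: it is quoted verbatim from Latschev's paper \cite{latschev2001vietoris} as an imported result, so there is no in-paper argument to compare yours against. Your sketch does follow the actual Hausmann--Latschev strategy (a center-of-mass map $\Phi\colon |\mathcal{R}_\epsilon(P)|\to\mathcal{M}$ against a partition-of-unity map $\Psi\colon\mathcal{M}\to|\mathcal{R}_\epsilon(P)|$, with the two composites contracted along short geodesics and along straight lines in simplices), so the architecture is the right one.

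However, as written this is a proof outline with an open hole at exactly the place you flag: the verification that $\Psi\circ\Phi\simeq\mathrm{id}$ stays inside $\mathcal{R}_\epsilon(P)$. Your own diameter estimate only puts the union of the original vertices and the vertices activated by $\Psi(\Phi(x))$ at pairwise distance about $\epsilon+\delta$, which is not $<\epsilon$ for any $\delta>0$, and you defer the fix (``a sharper bound on how near the Karcher mean sits to the generating vertices, or a rescaling of the parameter'') without supplying it. Since this is the step that actually determines the admissible range of $\epsilon$ and is the substance of Latschev's contribution beyond Hausmann's, the proposal cannot be accepted as a proof until that calibration is carried out --- for instance by interpolating through inclusions $\mathcal{R}_{\epsilon'}(P)\hookrightarrow\mathcal{R}_{\epsilon}(P)$ shown to be homotopy equivalences for suitable $\epsilon'<\epsilon$, which is how the literature handles it. Separately, the closing remark that the nerve lemma ``recovers the same conclusion'' is not right as stated: the nerve of the cover by $\epsilon$-balls is the \v{C}ech complex, not the Vietoris--Rips complex, and the two are only interleaved, not equal, so that aside does not give an independent verification of the Rips statement.
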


Proposition \ref{prop:rips2} provides justification for using the Vietoris-Rips complex of the point cloud $X$ to understand $\mathcal{M}$, but we do not know a priori how to pick such a suitable scale $\epsilon$. Therefore, we consider varying $\epsilon$ and calculate the homology of $\mathcal{R}_{\epsilon}(X)$. This procedure is known as persistent homology.

\begin{definition}[Persistent homology, \cite{oudot2017persistence}]
Given a filtration $\mathcal{X} = \left\{ X_i \right\}_{i \in \R}$, the collection of the $n$-dimensional homology $\left\{ H_n (X_i) \right\}_{i \in \R}$ is called the $n$-dimensional persistent homology of $\mathcal{X}$.   
\end{definition}

Multi-parameter persistence theory was introduced to address the shortcomings of one-parameter persistence theory. For example, one-parameter persistent homology of the Vietoris-Rips complex is known to be sensitive to outliers.
This problem can be rectified by adding one more filtration with the consideration of other data characteristics such as density \cite{carlsson2007theory, lesnick2015interactive}. To use multiple filtrations simultaneously, multi-parameter persistence theory is required \cite{carriere2020multiparameter}. Persistence module is a generalization of persistent homology. Now we define $m$-parameter persistence module.

\begin{definition}[$s$-parameter persistence module \cite{lesnick2015interactive}]
\label{Multidimensiona Persistence Module}
Let $(\mathbb{R}^{s}, \le)$ be an order category, where $(a_1,\cdots,a_s) \le (b_1, \cdots, b_s) \iff a_i \le b_i$ for all $i$, and let $\mathbf{Vect}_{\mathbb{F}}$ be the category of finite dimensional vector spaces over the field $\mathbb{F}$.
The $s$-parameter persistence module is defined by a functor $\mathsf{MP} : \mathbb{R}^{s} \rightarrow \mathbf{Vect}_{\mathbb{F}}$. Persistent homology is a special case of a persistence module, which requires a filtration for its construction.
\end{definition}

The challenge in multi-parameter persistence theory lies in the absence of a complete discrete invariant, which would determine isomorphism classes and represent the characteristics of an object in discrete values, summarizing multi-parameter persistence modules \cite{carlsson2007theory}. As an alternative, we consider an incomplete but practical invariant known as the fibered barcode. This approach involves one-dimensional reductions of multi-parameter persistence modules and is equivalent to the rank invariant \cite{lesnick2015interactive}.

\begin{definition}[Barcode, \cite{carlsson2007theory}]
Let $\mathsf{MP}$ be a one-parameter persistence module. Then $\mathsf{MP} \cong \bigoplus\limits_{j} I(b_j, d_j]$, where $I(b_j, d_j] : \mathbb{R} \rightarrow \mathbf{Vect}_{\mathbb{F}}$ defined by $a \mapsto \mathbb{F}$ if $a \in (b_j,d_j]$ and $a \mapsto \left\{ 0 \right\}$ otherwise. Barcode of $\mathsf{MP}$ is defined by the collection of such $(b_j, d_j]$. It is a complete invariant in the one-parameter persistence module category.   
\end{definition}
A barcode is a summary of persistent homology that simplifies our analysis. It is a complete invariant in the one-parameter persistent module category that determines the isomorphism class.

\begin{definition}[Fibered barcode, \cite{lesnick2015interactive}]
Let $\mathcal{L}$ be a collection of affine lines within $\mathbb{R}^{s}$, each with a nonnegative slope. For any given line $\ell$ in $\mathcal{L}$, we consider a restriction $\mathsf{MP}^{\ell} : \ell \to \mathbf{Vect}_{\mathbb{F}}$. Since this is a one-parameter filtration, we can consider the barcode of this filtration. We refer to $\left\{ \mathsf{bcd}_{n}(\mathsf{MP}^{\ell}) \right\}_{\ell \in \mathcal{L}}$ as the $n$-dimensional fibered barcode of $\mathsf{MP}$.
\end{definition}

\begin{definition}
[Rank invariant, \cite{carlsson2007theory, lesnick2015interactive}]
\label{rank invariant}
Given an $s$-parameter persistence module $\mathsf{MP}$, the rank invariant of $\mathsf{MP}$ is defined as a function $rank(\mathsf{MP}) : \left\{ (t_1,t_2) \in \mathbb{R}^{s} \times \mathbb{R}^{s} : t_1 < t_2 \right\} \rightarrow \mathbb{Z}_{\ge 0}$, which maps $(t_1,t_2) \mapsto rank\left(\mathsf{MP}(t_1) \rightarrow \mathsf{MP}(t_2)\right)$. 
\end{definition}
\end{subsection}

\begin{subsection}{Exact Multi-parameter Persistent Homology}
Recall that the assumption of topological time-series analysis is that time-series data arises from a dynamical system. Therefore, it is natural to explore which properties of time-series data can be further analyzed for the specific dynamical system. A Hamiltonian dynamical system $(\mathcal{M}, \phi_H)$ is a special case of dynamical system in which the trajectory of the state is governed by the Hamilton's equations. Here, $\mathcal{M}$ is a symplectic manifold and $\phi_H : \mathcal{M} \rightarrow \mathcal{M}$ represents the trajectory, called a Hamiltonian diffeomorphism. EMPH considers the Liouville torus of time-series data under the Hamiltonian system of uncoupled one-dimensional harmonic oscillators. It involves constructing multi-parameter persistent homology for the Liouville torus via a multi-parameter sublevel filtration for each Fourier mode and extracting information through rays in the multi-parameter space. Unlike the traditional sliding window embedding method, this approach allows us to derive an exact barcode formula, providing calculable and interpretable inference of the given time-series data.

\begin{definition}[Liouville torus of time-series data, \cite{kim2022exact}]
\label{def:Liouville torus}
Given time-series data $f : \mathbb{R} / 2\pi\mathbb{Z} \rightarrow \mathbb{R}$, we define the ($N$-truncated) Liouville torus $\Psi_{f,N}$ of $f$ as $\Psi_{f,N} := r_1^f \cdot \mathbb{S}^1 \times \cdots \times r_N^f \cdot \mathbb{S}^1$, where $r_L^f := 2\left| \hat{f}(L) \right|$. Here, $\hat{f}(L)$ denotes the $L$th Fourier coefficient of $f$ and $\mathbb{S}^1$ is a unit circle with Euclidean metric. This torus can be represented by the Liouville torus of the system of uncoupled one-dimensional $N$ harmonic oscillators. For the details refer to \cite{kim2022exact}.  From now on, unless otherwise specified, we will abbreviate $\Psi_{f,N}$ as $\Psi_f$.
\end{definition}
Now, we construct multi-parameter persistent homology through a multi-parameter sublevel filtration for each Fourier mode. For each parameter $(\epsilon_1, \cdots, \epsilon_N) \in \R^N$ in the multi-parameter space, we designate the product of Vietoris-Rips complexes $\mathcal{R}_{\epsilon_1}\left(r_1^f \cdot \mathbb{S}^1\right) \times \cdots \times \mathcal{R}_{\epsilon_N}\left(r_N^f \cdot \mathbb{S}^1\right)$. Then, we consider the one-parameter reduction of the constructed multi-parameter persistent homology along a ray $\ell$. Since this reduction involves a one-parameter filtration, we can define the barcode within this one-parameter filtration. That is, we consider the rank invariant of the multi-parameter persistent homology. 
\begin{definition}{\cite{kim2022exact}}
\label{Def:multi-parameter}
Define a multi-parameter filtration $\mathcal{R}\left(\Psi_{f}\right) : \mathbb{R}^{N} \rightarrow \mathbf{Simp}$ by $\boldsymbol{\epsilon} = (\epsilon_1,\cdots,\epsilon_N) \mapsto \mathcal{R}_{\epsilon_1}\left(r_1^f \cdot \mathbb{S}^1\right) \times \cdots \times \mathcal{R}_{\epsilon_N}\left(r_N^f \cdot \mathbb{S}^1\right),$ where $\mathbf{Simp}$ is the category of simplicial complexes. Now, let a ray with direction vector $\mathbf{a}$ be $\ell(t) = \sqrt{N} t \cdot {\mathbf{a} \over \lVert \mathbf{a} \rVert}$ in the multi-parameter space, and define a one-parameter reduced filtration $\mathcal{R}^{\ell}\left(\Psi_{f}\right) : \mathbb{R} \rightarrow \mathbf{Simp}$ specified by $\mathcal{R}^{\ell}_{t} \left(\Psi_{f}\right) = \mathcal{R}_{\sqrt{N} t \cdot {a_1 \over \lVert \mathbf{a} \rVert}}\left(r_1^f \cdot \mathbb{S}^1\right) \times \cdots \times \mathcal{R}_{\sqrt{N} t \cdot {a_N \over \lVert \mathbf{a} \rVert}}\left(r_N^f \cdot \mathbb{S}^1\right)$. We refer to the barcode $\mathsf{bcd}_{n}^{\mathcal{R},\ell}(\Psi_{f})$ of this filtration as the EMPH of $f$.
\end{definition}

\begin{example}
\label{ex:multi-filtration}
For the time-series data of $f(t) = \cos t + {1\over 2} \cos 2t$, its Liouville torus is $\Psi_f = \mathbb{S}^1 \times {1 \over 2} \cdot \mathbb{S}^1$. We assign each simplicial complex $\mathcal{R}_{\epsilon_1} (\mathbb{S}^1) \times \mathcal{R}_{\epsilon_2} ({1 \over 2} \cdot \mathbb{S}^1)$ to a point $(\epsilon_1,\epsilon_2) \in \R^2$ in the multi-parameter filtration. Figure \ref{fig:multi-filtration} serves as a supplement to this example.

\begin{figure}[h]
    \centering
    \includegraphics[width=0.6\linewidth]{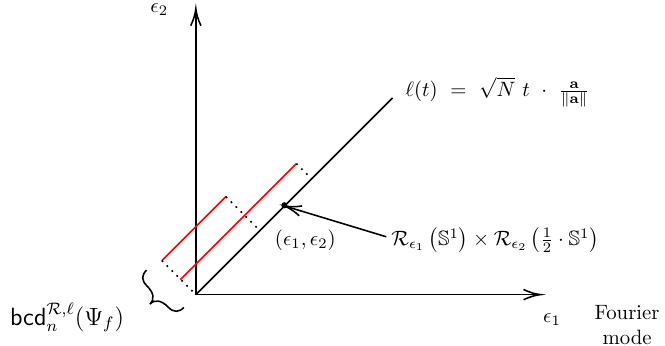}
    \caption{Multi-parameter filtration defined in Example \ref{ex:multi-filtration}.}
    \label{fig:multi-filtration}
\end{figure}    
\end{example}

\begin{theorem}{\cite{kim2022exact}}
\label{thm:EMPH}
If a  ray $\ell$ in the filtration space has the  direction vector $\mathbf{a} = (a_{1},\cdots , a_{N})>0$, then the $n$-dimensional barcode is given by the following:  
\begin{equation}
\mathsf{bcd}_{n}^{\mathcal{R},\ell}(\Psi_{f}) = \left\{ J_{1}^{n_1, \ell}\bigcap \cdots \bigcap J_{N}^{n_N, \ell} :  J_{L}^{n_L, \ell} \in  \mathsf{bcd}_{n_L}^{\mathcal{R},\ell} 	\left( r_L^f \cdot \mathbb{S}^1 \right)  \ \text{and} \ \sum\limits_{L=1}^{N} n_L = n \right\},
\label{eq:EMPH}
\end{equation}
where
\begin{equation}
J_{L}^{n_L, \ell} = \begin{cases} \left(0, \infty\right), & \mbox{if }n_L =0 \\ \left({2r_L^f \sin\left(\pi {k \over 2k+1}\right)  \over \sqrt{N}a_L / \lVert \mathbf{a} \rVert}  , {2r_L^f \sin\left(\pi {k+1 \over 2k+3}\right)  \over \sqrt{N}a_L / \lVert \mathbf{a} \rVert}\right], & \mbox{if }n_L =2k+1 \\ \ \emptyset, & \mbox{otherwise}
\end{cases}.
\label{eq:barcode in EMPH}
\end{equation}
\end{theorem}

With this theorem, we can discern which information of the given time-series data is encoded in the barcode of EMPH. Table \ref{table:relation} shows the information of time-series data encoded in the barcode.

\begin{table}[h]
    \centering
    \includegraphics[width=1 \linewidth]{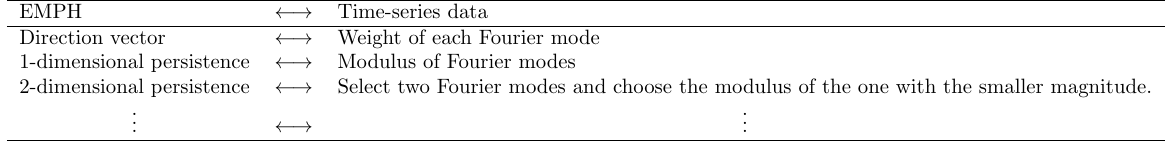}
    \caption{Relationship between EMPH and time-series data.}
    \label{table:relation}
\end{table}
The EMPH consists of the following pipeline \cite{kim2022exact}:

$$\begin{array}{ccccc}
f & \longrightarrow & \mathcal{R}_{\epsilon_1}(r_1^f \cdot \mathbb{S}^1) \times \cdots \times \mathcal{R}_{\epsilon_N}(r_N^f \cdot \mathbb{S}^1)  & \longrightarrow & \mathsf{bcd}_{n}^{\mathcal{R},\ell}(\Psi_{f}) \\
\text{Time-series data} &&  \begin{array}{c} \text{Multi-parameter filtration} \\ \text{of the Liouville torus} \end{array} && \text{Barcode}
\end{array}$$

In \cite{kim2022exact}, the authors utilized EMPH within the contexts of both unsupervised and supervised learning problems. For the unsupervised learning, it was demonstrated that the choice of ray provides a variety of inferences. In this paper, our goal is to develop a method of finding an optimal ray. We utilize the gradient descent method to minimize the loss function and update the ray. To achieve this, it is essential to introduce a smoothness of the update process.

\end{subsection}

\subsection{Smooth structure in the space of Barcodes}
The motivation for updating the filtration in persistent homology, rather than adhering to a fixed filtration, is to more accurately capture the characteristics of the data within the filtration process. Therefore, we assign parameters to filtrations and attempt to find a filtration which minimizes the loss function. The challenging aspect is that persistent homology is summarized by a barcode, but the space of barcodes lacks a natural smooth manifold structure. This difficulty hinders the application of gradient descent. Diffeology is a conceptual generalization of smooth charts that enables us to define smoothness in the space of barcodes. Motivated by diffeology theory, the following definitions are introduced in \cite{leygonie2021framework}.
\begin{definition}{\cite{leygonie2021framework}}
\label{def:smooth}
Consider smooth manifolds $\mathcal{M}$ and $\mathcal{N}$, and denote the space of barcodes by $\mathsf{Bar}$. For a map $B : \mathcal{M} \rightarrow \mathsf{Bar}$, we define it as $C^k$-differentiable at $x \in \mathcal{M}$ if there exists a neighborhood $U$ of $x$ for $p, q \in \mathbb{Z}_{\ge 0}$ and a $C^k$-differentiable map $\tilde{B} : \mathcal{M} \rightarrow \mathbb{R}^{2p} \times \mathbb{R}^{q}$ such that $B = Q_{p,q} \circ \tilde{B}$ on $U$. Here, $p$ and $q$ represent the number of finite and infinite bars, respectively, and $Q_{p,q} (b_1,d_1, \cdots, b_p,d_p, v_1, \cdots, v_q) = \left\{ (b_1, d_1], \cdots, (b_p, d_p], \cdots, (v_1, \infty) , \cdots, (v_q, \infty) \right\}$. It is important to note that both spaces, $\mathcal{M}$ and $\mathbb{R}^{2p} \times \mathbb{R}^{q}$, are smooth manifolds, each equipped with a natural smooth structure. A map $V : \mathsf{Bar} \rightarrow \mathcal{N}$ is called $C^k$-differentiable at $\mathcal{B} \in \mathsf{Bar}$ if for all $p,q \in \mathbb{Z}_{\ge 0}$ and $\tilde{\mathcal{B}} \in \R^{2p} \times \R^{q}$ such that $Q_{p,q}(\tilde{\mathcal{B}}) = \mathcal{B}$, the map $\tilde{V} = V \circ Q_{p,q} : \R^{2p} \times \R^{q} \rightarrow \mathcal{N}$ is a $C^k$-differentiable map on a neighborhood of $\tilde{\mathcal{B}}$. Figure \ref{fig:smooth} is a schematic illustration of this definition.
\end{definition}

\begin{figure}[h]
    \centering
    \includegraphics[width=1\linewidth]{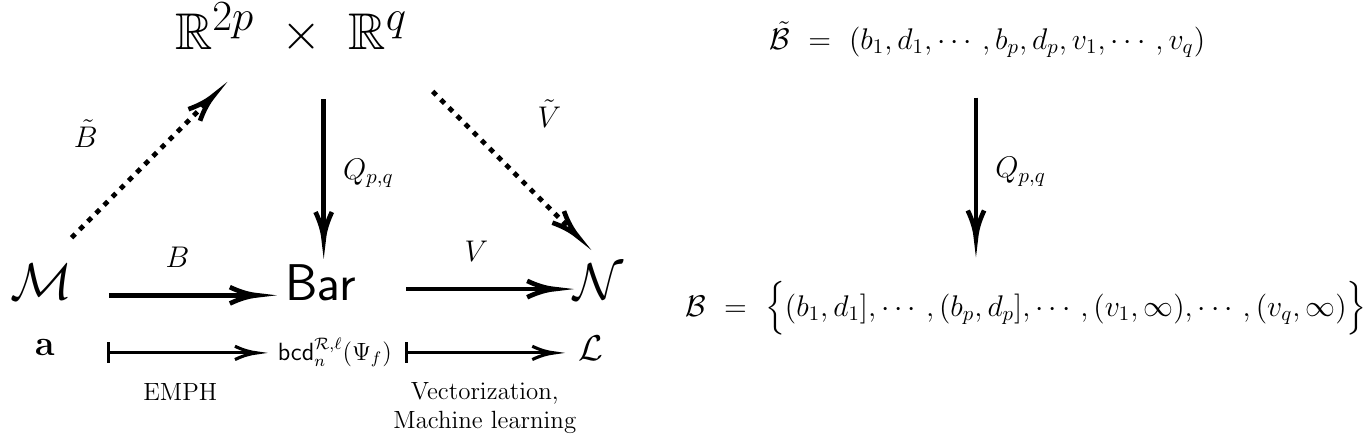}
    \caption{Schematic illustration of smooth structure on the space of barcodes.}
    \label{fig:smooth}
\end{figure}



In order to utilize machine learning, it is necessary to introduce such a differential structure to the barcode space when searching for parameters that minimize the loss functions. Our goal is to ensure that when $V$ and $B$ are smooth defined in Definition \ref{def:smooth}, $V \circ B : \mathcal{M} \rightarrow \mathcal{N}$ is also smooth within the smooth manifold category.

\begin{proposition}{\cite{leygonie2021framework}}
Suppose that $B : \mathcal{M} \rightarrow \mathsf{Bar}$ and $V : \mathsf{Bar} \rightarrow \mathcal{N}$ are $C^k$-differentiable. Then $V \circ B : \mathcal{M} \rightarrow \mathcal{N}$ is a well-defined (the derivative of $V \circ B$ is independent of $\tilde{B}$ and $\tilde{D}$) $C^k$-differentiable function in the smooth manifold category.
\end{proposition}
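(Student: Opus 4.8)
\emph{The plan} is to prove the two halves of the statement separately: first that $V\circ B$ is, near each point, a composition of $C^k$ maps between ordinary smooth manifolds (hence $C^k$), and then that the differential one reads off from such a local factorization is independent of the factorization used.

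\emph{Step 1: local factorization.} Fix $x\in\mathcal{M}$. Since $B$ is $C^k$-differentiable at $x$, pick a neighbourhood $U\ni x$, integers $p,q\ge 0$, and a $C^k$ map $\tilde B:\mathcal{M}\to\R^{2p}\times\R^{q}$ with $B=Q_{p,q}\circ\tilde B$ on $U$. In particular $Q_{p,q}(\tilde B(x))=B(x)$, so the defining property of $C^k$-differentiability of $V$ at the barcode $\mathcal{B}:=B(x)$, applied to this very pair $(p,q)$ and to $\tilde{\mathcal B}:=\tilde B(x)$, shows that $\tilde V:=V\circ Q_{p,q}$ is $C^k$ on a neighbourhood of $\tilde B(x)$. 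Shrinking $U$ so that $\tilde B(U)$ lies in that neighbourhood, we get $V\circ B=\tilde V\circ\tilde B$ on $U$, a composition of $C^k$ maps of smooth manifolds; hence $V\circ B$ is $C^k$ near $x$, and the natural candidate for its differential is $D\tilde V_{\tilde B(x)}\circ D(\tilde B)_x$.

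\emph{Step 2: reducing the independence claim.} Let $(U_1,p_1,q_1,\tilde B_1)$ and $(U_2,p_2,q_2,\tilde B_2)$ be two such data at $x$, with $\tilde V^{(i)}=V\circ Q_{p_i,q_i}$; one must show $D\tilde V^{(1)}_{\tilde B_1(x)}\circ D(\tilde B_1)_x=D\tilde V^{(2)}_{\tilde B_2(x)}\circ D(\tilde B_2)_x$. Since $Q_{p,q}(\tilde B_i(x))$ always has exactly $q_i$ infinite bars (an infinite interval never collapses), matching the infinite-bar count of $B(x)$ forces $q_1=q_2$. Next, $Q_{p,q}$ is invariant under the permutation action of $\mathfrak{S}_p\times\mathfrak{S}_q$ on $\R^{2p}\times\R^{q}$, so each $\tilde V^{(i)}$ is too; hence replacing $\tilde B_i$ by $\Sigma\circ\tilde B_i$ for a fixed permutation matrix $\Sigma$ changes neither $\tilde V^{(i)}\circ\tilde B_i$ nor its differential. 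On a connected neighbourhood, any continuous matching between the non-trivial slots of $\tilde B_1$ and $\tilde B_2$ is locally constant; after shrinking $U_1\cap U_2$ around $x$ and applying such a fixed permutation, we may assume $\tilde B_1$ and $\tilde B_2$ share a common coordinate block recording the bars of $B$ that are non-trivial at $x$, and differ only in extra coordinate pairs recording bars that lie on the diagonal (length zero) at $x$.

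\emph{Step 3 (the crux): the diagonal part contributes nothing.} It remains to show that these extra ``diagonal'' blocks do not affect $D\tilde V^{(i)}_{\tilde B_i(x)}\circ D(\tilde B_i)_x$. Here the full quantifier ``for all $p,q$ and all $\tilde{\mathcal B}$ with $Q_{p,q}(\tilde{\mathcal B})=\mathcal B$'' in the hypothesis on $V$ is essential: it gives $C^k$-regularity of $\tilde V^{(i)}$ even at representatives of $\mathcal B$ carrying arbitrary diagonal bars. The argument is to test along an arbitrary $C^k$ curve $\gamma$ through $x$ and compare $t\mapsto\tilde V^{(i)}(\tilde B_i(\gamma(t)))$: on the common block the two agree, while on each diagonal block one uses that $\tilde V^{(i)}$ is $C^k$ at a point with $b_j=d_j$, together with the identity relating $\tilde V^{(i)}$ to the lift obtained by deleting that coordinate pair on the locus where it stays diagonal, to see that the block's contribution to the derivative at $t=0$ is determined by $\gamma$ and $B\circ\gamma$ alone, hence is the same for both lifts. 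I expect this coherence statement for diagonal bars to be the main technical obstacle. Once it is in place, $D(V\circ B)_x$ is well defined, its $C^{k-1}$ dependence on $x$ is automatic from Step 1, and running the argument at every point of $\mathcal{M}$ gives the claim.
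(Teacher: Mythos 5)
First, note that the paper itself offers no proof of this proposition: it is quoted verbatim from \cite{leygonie2021framework}, so there is no in-paper argument to compare against. Judged on its own terms, your Step 1 is correct and is, in fact, essentially the entire proof. On a neighbourhood $U$ of $x$ you have $V\circ B=\tilde V\circ\tilde B$, where $\tilde B$ is a $C^k$ map into $\R^{2p}\times\R^{q}$ and $\tilde V=V\circ Q_{p,q}$ is $C^k$ near $\tilde B(x)$ --- the universal quantifier over $(p,q,\tilde{\mathcal B})$ in the differentiability of $V$ is exactly what guarantees this for whichever lift $\tilde B(x)$ happens to be. Hence $V\circ B$ is $C^k$ near $x$.

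The gap is in Steps 2--3, and it is a gap you did not need to open. The independence of the derivative requires no matching of coordinate slots, no permutation argument, and no analysis of diagonal bars: if $(\tilde B_1,\tilde V^{(1)})$ and $(\tilde B_2,\tilde V^{(2)})$ are two local factorizations at $x$, then on a common neighbourhood of $x$ both $\tilde V^{(1)}\circ\tilde B_1$ and $\tilde V^{(2)}\circ\tilde B_2$ are equal, \emph{as functions}, to $V\circ B$. Two $C^1$ maps between manifolds that agree on a neighbourhood of $x$ have the same derivative at $x$, and by the chain rule that common derivative equals $D\tilde V^{(i)}_{\tilde B_i(x)}\circ D(\tilde B_i)_x$ for either $i$. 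That is the whole independence statement. Your Step 3, by contrast, is explicitly left unproved (``I expect this coherence statement for diagonal bars to be the main technical obstacle''), so as written the proposal is incomplete; and the coordinate-matching in Step 2 has its own problems (bars occurring with multiplicity admit no canonical matching, and $p_1\neq p_2$ in general). Delete Steps 2 and 3, replace them with the two-line observation above, and the proof is complete.
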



\subsection{Persistence Image}
Note that the space of barcodes lacks not only a natural differential structure but also a vector space structure. This limitation makes it difficult to integrate TDA and ML. To overcome this limitation, several studies have been conducted to develop smooth mappings from barcodes to a Hilbert space; among these, the persistence landscape \cite{bubenik2015statistical} and persistence image \cite{adams2017persistence} are well-known methods. In this study, we use vectorization with the persistence image because it is proven to be a smooth map in \cite{leygonie2021framework}, allowing us to prove the smoothness of our workflow.

\begin{definition}{\cite{adams2017persistence}}
Given a barcode $\mathcal{B}$, its persistence image is an $r^2$-dimensional vector $\I_{\mathcal{B}} = \bigl( I_{\mathcal{B}} (x_1, y_1), \cdots,$ $I_{\mathcal{B}} (x_{r^2}, y_{r^2}) \bigr) \in \mathbb{R}^{r^2}$, where
\begin{equation}
I_{\mathcal{B}}(x,y) = \sum\limits_{(b_i,d_i] \in \mathcal{B}} \omega(d_i - b_i)g_{b_i, d_i}(x,y),
\label{eq:persistence image}
\end{equation}
and $\omega : \mathbb{R} \rightarrow \mathbb{R}$ is a weight function and $g_{b_i, d_i}(x,y)$ is defined as ${1 \over 2\pi\sigma^2} e^{-{{\left[ (x-b_i)^2 + (y - (d_i - b_i))^2 \right]} \over 2\sigma^2}}$. Here, $(x_j, y_j)$ represents a subdivided point within the given square. From now on, unless otherwise specified, we will abbreviate $\I_{\mathcal{B}}$ as $\I$.
\end{definition}

In \cite{adams2017persistence}, the persistence image is defined as the integration of $g_{b_i, d_i}$. However, for practical purpose, the above discrete form is usualy used, e.g. in Gudhi \cite{gudhi:urm}. The following propositions highlight the properties of the persistence image.

\begin{proposition}[\cite{adams2017persistence}]
\label{prop:persistence image}
Let $W_1$ be the $1$-Wasserstein distance on the space of barcodes. The persistence image $\I : (\mathsf{Bar}, W_1) \rightarrow \mathbb{R}^n$ is a Lipschitz function.  
\end{proposition}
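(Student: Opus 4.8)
The plan is to reduce the statement to a uniform pointwise estimate. Concretely, I will show that for every grid point $(x_j,y_j)$ one has $|I_{\mathcal{B}}(x_j,y_j) - I_{\mathcal{B}'}(x_j,y_j)| \le C\, W_1(\mathcal{B},\mathcal{B}')$ with a constant $C$ that does not depend on the grid point, and then pass to the Euclidean norm of $\I_{\mathcal{B}} - \I_{\mathcal{B}'} \in \R^{n}$ via the trivial bound $\lVert v\rVert_2 \le \sqrt{n}\,\lVert v\rVert_\infty$, which yields Lipschitz constant $\sqrt{n}\,C$. At the outset I may assume that $\mathcal{B}$ and $\mathcal{B}'$ are finite and have the same number of infinite bars, since otherwise $W_1(\mathcal{B},\mathcal{B}') = \infty$ and there is nothing to prove; and because $\I$ only sums over finite bars, it suffices to compare the finite parts.

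Write $\phi_{x,y}(b,d) := \omega(d-b)\,g_{b,d}(x,y)$ for the contribution of a single bar, so that $I_{\mathcal{B}}(x,y) = \sum_{(b,d]\in\mathcal{B}} \phi_{x,y}(b,d)$, and extend $\phi_{x,y}$ to the diagonal $\Delta = \{\,b=d\,\}$ by the value $0$, consistently with the standing assumption $\omega(0)=0$. The two estimates on $\phi_{x,y}$ that I would establish are: (i) $\phi_{x,y}$ is globally Lipschitz in $(b,d)$ with a constant $C_1$ independent of $(x,y)$ over the fixed grid, since $\nabla_{(b,d)}\phi_{x,y} = (\nabla\omega)\,g + \omega\,(\nabla_{(b,d)}g)$, where $\nabla\omega$ is bounded ($\omega$ Lipschitz) and $g \le \tfrac{1}{2\pi\sigma^2}$, while in the second term the Gaussian decay dominates the at-most-linear growth of $\omega$ (recall $|\omega(d-b)| = |\omega(d-b)-\omega(0)| \le \mathrm{Lip}(\omega)\,|d-b|$), so the whole gradient is bounded uniformly; and (ii) $|\phi_{x,y}(b,d)| \le C_2\,\mathrm{dist}\big((b,d),\Delta\big)$, because $|\omega(d-b)| \le \mathrm{Lip}(\omega)\,|d-b|$ is comparable to $\mathrm{dist}((b,d),\Delta)$ and $g \le \tfrac{1}{2\pi\sigma^2}$.

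With (i) and (ii) in hand, given any matching $\gamma$ between $\mathcal{B}\cup\Delta$ and $\mathcal{B}'\cup\Delta$, I split $I_{\mathcal{B}}(x,y)-I_{\mathcal{B}'}(x,y)$ into the contributions of $\gamma$-matched pairs of bars and of bars matched to $\Delta$ (using $\phi_{x,y}\equiv 0$ on $\Delta$), and apply the triangle inequality:
\begin{equation*}
|I_{\mathcal{B}}(x,y)-I_{\mathcal{B}'}(x,y)| \;\le\; \sum_{\gamma(u)=u'} C_1\lVert u-u'\rVert \;+\; \sum_{\gamma(u)\in\Delta} C_2\,\mathrm{dist}(u,\Delta) \;+\; \sum_{\gamma(u')\in\Delta} C_2\,\mathrm{dist}(u',\Delta) \;\le\; \max(C_1,C_2)\,\mathrm{cost}(\gamma).
\end{equation*}
Taking the infimum over all matchings gives $|I_{\mathcal{B}}(x,y)-I_{\mathcal{B}'}(x,y)| \le \max(C_1,C_2)\,W_1(\mathcal{B},\mathcal{B}')$ at each grid point, hence $\lVert\I_{\mathcal{B}}-\I_{\mathcal{B}'}\rVert_2 \le \sqrt{n}\,\max(C_1,C_2)\,W_1(\mathcal{B},\mathcal{B}')$. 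The main obstacle is estimate (ii), i.e. controlling the bars that an optimal matching sends to the diagonal: this is exactly where the hypothesis $\omega(0)=0$ is indispensable, for without it a bar of vanishing persistence still contributes a full-height Gaussian to the image, and inserting or deleting such a bar (nearly free in $W_1$) would shift $\I$ by a fixed amount. A secondary point requiring care is the uniformity of $C_1$ over the grid and over all $(b,d)$, which is precisely what the ``Gaussian decay dominates polynomial growth'' observation in (i) provides. This is in essence the persistence-image stability argument of \cite{adams2017persistence}, transcribed to the discretized vector $\I$.
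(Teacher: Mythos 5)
The paper offers no proof of this proposition: it is imported verbatim from \cite{adams2017persistence}, so there is no internal argument to compare against. Your proposal is a correct reconstruction of the stability proof in that reference: bound the per-bar contribution $\phi_{x,y}$ by a uniform Lipschitz constant in $(b,d)$, use a matching realizing $W_1$ together with the vanishing of $\phi_{x,y}$ on the diagonal to control unmatched bars, and pass from the pointwise bound to $\lVert\cdot\rVert_2$ at the cost of a factor $\sqrt{n}$. Two small remarks. First, your estimate (ii) is not an independent ingredient: it follows from (i) together with $\phi_{x,y}\vert_\Delta=0$, since $|\phi_{x,y}(u)|=|\phi_{x,y}(u)-\phi_{x,y}(\pi_\Delta(u))|\le C_1\,\mathrm{dist}(u,\Delta)$. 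Second, you are right that the hypotheses $\omega$ Lipschitz and $\omega(0)=0$ are indispensable and are only implicit in the paper's statement (``$\omega$ is a weight function''); they do hold for the choice $\omega(t)=t$ actually used later in the proof of Lemma \ref{lemma:V}, and your observation that the Lipschitz constant is uniform over the finite grid (Gaussian decay absorbing the linear growth of $\omega$) is exactly the point that needs checking.
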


Given that for barcodes $\mathcal{B}_1$ and $\mathcal{B}_2$, $W_p(\mathcal{B}_1, \mathcal{B}_2) \le W_q(\mathcal{B}_1, \mathcal{B}_2)$ when $p \ge q$, it follows that Proposition \ref{prop:persistence image} also holds for the bottleneck distance $(\mathsf{Bar}, d_B)$ \cite{skraba2020wasserstein}.

\begin{proposition}{\cite{leygonie2021framework}}
If the weight function $\omega$ is $C^k$-differentiable, then the persistence image is also $C^k$-differentiable.    
\end{proposition}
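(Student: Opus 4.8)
The plan is to reduce the statement to an application of the chain rule for the diffeological-type smooth structure from Definition~\ref{def:smooth}, so the main work is to exhibit a $C^k$-differentiable lift $\tilde{\I}$ of the persistence image through $Q_{p,q}$. Concretely, fix a barcode $\mathcal{B}$ and a representative $\tilde{\mathcal{B}} = (b_1,d_1,\dots,b_p,d_p,v_1,\dots,v_q) \in \R^{2p}\times\R^q$ with $Q_{p,q}(\tilde{\mathcal{B}}) = \mathcal{B}$. First I would observe that the infinite bars contribute nothing to the formula \eqref{eq:persistence image} (the sum runs only over finite bars $(b_i,d_i]$), so the coordinate function $(b_1,d_1,\dots,b_p,d_p,v_1,\dots,v_q) \mapsto I_{\mathcal{B}}(x,y)$ factors through the projection onto the $\R^{2p}$ factor. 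Thus it suffices to check that, for each fixed grid point $(x_j,y_j)$, the map $(b_1,d_1,\dots,b_p,d_p) \mapsto \sum_{i=1}^p \omega(d_i-b_i)\, g_{b_i,d_i}(x_j,y_j)$ is $C^k$ as a function on $\R^{2p}$.

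Next I would analyze this finite sum term by term. The Gaussian factor $g_{b_i,d_i}(x_j,y_j) = \frac{1}{2\pi\sigma^2}\exp\!\bigl(-\tfrac{(x_j-b_i)^2 + (y_j-(d_i-b_i))^2}{2\sigma^2}\bigr)$ is a composition of the polynomial map $(b_i,d_i)\mapsto (x_j-b_i)^2+(y_j-(d_i-b_i))^2$ with the exponential, hence $C^\infty$ in $(b_i,d_i)$; the persistence-length factor $d_i-b_i$ is linear, hence $C^\infty$; and $\omega$ is $C^k$ by hypothesis, so the composite $\omega(d_i-b_i)$ is $C^k$. A product of a $C^k$ function with a $C^\infty$ function is $C^k$, and a finite sum of $C^k$ functions is $C^k$, so each coordinate $I_{\mathcal{B}}(x_j,y_j)$ is a $C^k$ function of $\tilde{\mathcal{B}}$; assembling the $r^2$ coordinates gives a $C^k$ map $\tilde{\I} : \R^{2p}\times\R^q \to \R^{r^2}$ with $\I = \tilde{\I}\circ\ldots$, well, more precisely $\I \circ Q_{p,q} = \tilde{\I}$ on a neighborhood of $\tilde{\mathcal{B}}$. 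Since $p,q$ and the representative $\tilde{\mathcal{B}}$ were arbitrary, this is exactly the condition in Definition~\ref{def:smooth} for $\I : \mathsf{Bar}\to\R^{r^2}$ to be $C^k$-differentiable at $\mathcal{B}$, and $\mathcal{B}$ was arbitrary.

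There is no serious obstacle here — the statement is essentially bookkeeping against the definition — but the one point that needs genuine care is verifying the \emph{independence of the representative}, i.e.\ that the lift $\tilde{\I}$ really satisfies the uniform requirement "for all $p,q$ and all $\tilde{\mathcal{B}}$ with $Q_{p,q}(\tilde{\mathcal{B}})=\mathcal{B}$" rather than merely "for some choice." This is what makes the persistence image a well-defined smooth map on $\mathsf{Bar}$ in the sense of the preceding proposition, and it works precisely because the formula \eqref{eq:persistence image} is symmetric under permuting the finite bars and genuinely ignores the $v_j$'s, so any two representatives of the same barcode are carried to the same value by the same explicit $C^k$ formula. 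I would close by noting that the argument in fact shows $\I$ is $C^k$ whenever $\omega$ is, and $C^\infty$ when $\omega$ is smooth, since the Gaussian kernel and the linear length map are already $C^\infty$.
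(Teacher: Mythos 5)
The paper does not prove this proposition---it is quoted from \cite{leygonie2021framework} without argument---so there is no internal proof to compare against; your argument is the standard one and is correct: the lift $\I\circ Q_{p,q}$ is an explicit finite sum of products of the $C^k$ function $\omega(d_i-b_i)$ with $C^\infty$ Gaussians, ignores the infinite-bar coordinates $v_j$, and hence is $C^k$ on a neighborhood of every representative, which is precisely what Definition~\ref{def:smooth} demands. The only point you gloss over is the degenerate case $d_i\le b_i$ (where the interval $(b_i,d_i]$ is empty and drops out of the sum, potentially changing the formula for $\I\circ Q_{p,q}$ across that locus); this is handled in the source by conventions on $\omega$ near the diagonal and does not affect the substance of your argument.
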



\section{Filtration learning within Exact Multi-parameter Persistent Homology}
\label{sec:main}
In this section, we propose a novel filtration learning with EMPH, which is summarized as an optimization problem. We first demonstrate the existence of an optimal filtration ray and, more generally, an optimal filtration curve in the multi-parameter space. Here, `optimal filtration' means the filtration that minimizes the given loss function. We address the problem of finding an optimal filtration ray in Section \ref{subsec:filtration ray}, and then we generalize the problem that addresses an optimal filtration curve in Section \ref{subsec:curved}.

\subsection{Filtration ray learning}
\label{subsec:filtration ray}
In this section, we discuss how to find an optimal filtration ray. We refer this process to as filtration ray learning, as defined in \eqref{eq:objective}. Figure \ref{fig:Pipeline_of_optimization2} summarizes our model; given time-series data (left), we apply EMPH to obtain a barcode, then transform it into a persistence image (PI) and input the PI into the neural network. In the multi-parameter space, if the direction vector of a ray changes, EMPH changes. As a result, the obtained PI changes. Consequently, the input to the neural network changes as well. With this process, we aim to identify a direction vector that minimizes the loss function.

\begin{figure}[h]
    \centering
    \includegraphics[width=1\linewidth]{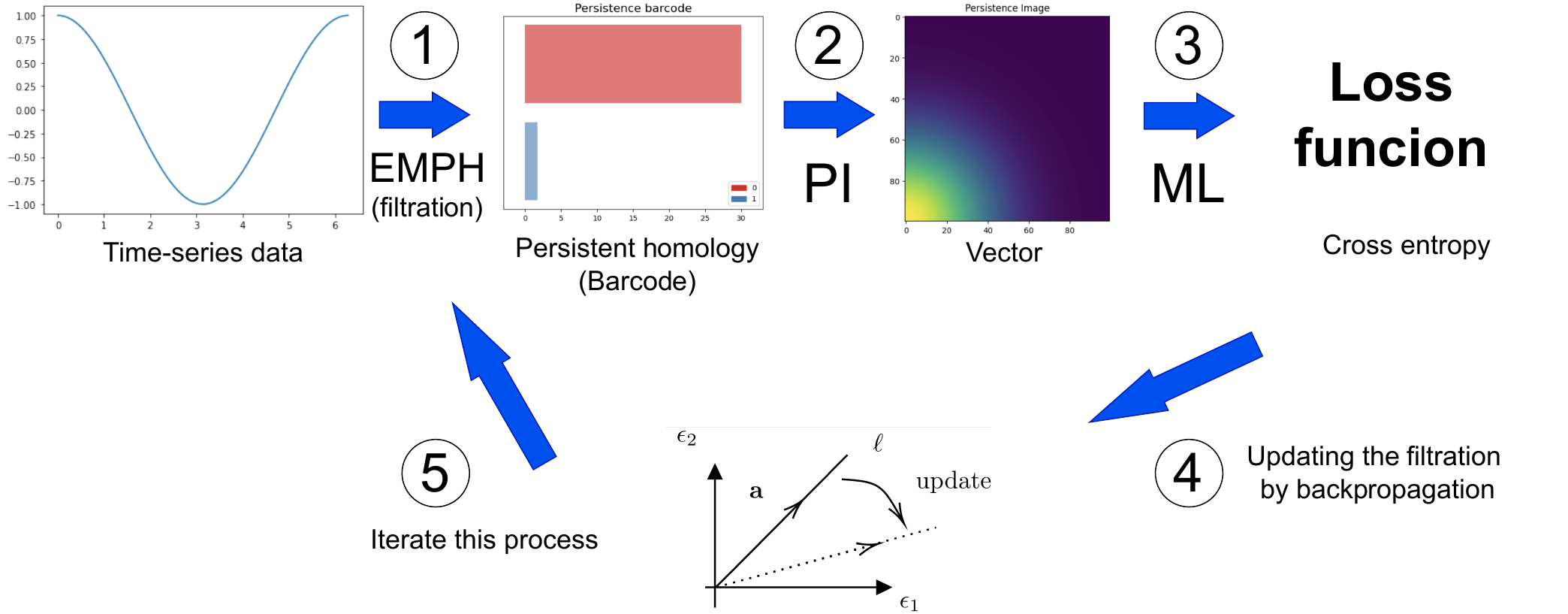}
    \caption{Summary of the proposed filtration learning model.}
    \label{fig:Pipeline_of_optimization2}
\end{figure}

We provide the notations in Table \ref{table:definition} used for the further analysis below. 

\begin{table}[h]
\includegraphics[width=\linewidth]{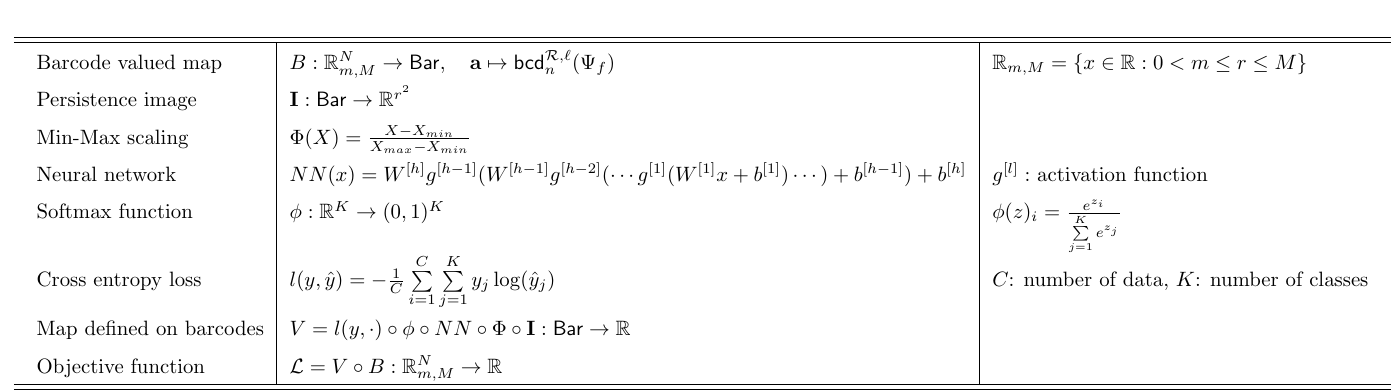}
\caption{Definition of functions used in filtration ray learning.}
\label{table:definition}
\end{table}

Our problem is summarized as the following constrained optimization problem:
\begin{equation}
\label{eq:objective}
\argmin_{\substack{\mathbf{W},\mathbf{b}, \\ \a \in \mathbb{R}_{m,M}^N}} \mathcal{L}(\mathbf{W},\mathbf{b}, \a),
\end{equation}
where $\mathbf{W}$ and $\mathbf{b}$ represent the weight and bias in the neural network, respectively, and $\a$ is the direction vector of a ray in the multi-parameter space, which is constrained to $\mathbb{R}_{m,M}^N$. The reason for restricting the filtration parameter space to $\mathbb{R}_{m,M}^N$ instead of $\mathbb{R}_{> 0}^N$ is to ensure the existence of the optimal parameter $\a$ (Theorem \ref{thm:variation principle}). Note that if we pick an excessively small $m$, it permits small entries of $\a \in \mathbb{R}_{m,M}^N$, which can lead to get excessively large persistence according to \eqref{eq:barcode in EMPH}. Therefore, in the gradient descent to find a solution to \eqref{eq:objective}, it would be unstable, even with a small learning rate. We need to choose $m$ appropriately.

What is remarkable about the proposed optimization problem is that the loss function includes $\a$ as a variable. We want to find a minimizer $\a$ which minimizes the loss function. We demonstrate the smoothness of $\mathcal{L}$ and explain how to solve the optimization problem in the following section.

\begin{remark}
We applied Min-Max scaling $\Phi$ to the persistence image. At each epoch, the persistence image is updated, with its scale being changed, but Min-Max scaling yields better results according to our numerical experiments.
\end{remark}

\subsubsection{Smoothness}
\label{subsec:smoothness}
In the filtration learning of EMPH, gradient descent is used to minimize the loss function. To show convergence, we first demonstrate the smoothness of our model for one-dimensional EMPH.
\begin{lemma} 
\label{lemma:smooth}
Let $B : \mathbb{R}_{> 0}^N \rightarrow \mathsf{Bar}$ be a one-dimensional $(i.e., n=1)$ barcode valued map defined by $\mathbf{a} \mapsto \mathsf{bcd}_{1}^{\mathcal{R},\ell}(\Psi_{f})$. Then $B$ is a smooth map.
\end{lemma}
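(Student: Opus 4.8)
The plan is to invoke the smooth-structure framework of Definition \ref{def:smooth}: to show $B$ is smooth at a point $\mathbf{a} \in \mathbb{R}_{>0}^N$, I must exhibit a neighborhood $U$ of $\mathbf{a}$, integers $p,q$, and a smooth map $\tilde{B} : \mathbb{R}_{>0}^N \to \mathbb{R}^{2p} \times \mathbb{R}^q$ with $B = Q_{p,q} \circ \tilde{B}$ on $U$. The key input is the exact formula of Theorem \ref{thm:EMPH}. For degree $n=1$, the intersection $J_1^{n_1,\ell} \cap \cdots \cap J_N^{n_N,\ell}$ with $\sum n_L = 1$ forces exactly one index $L$ to have $n_L = 1$ (hence $k=0$) and all others $n_L = 0$. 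So each $1$-dimensional bar comes from a single mode $L$: the finite interval is $J_L^{1,\ell} = \left( \tfrac{2 r_L^f \sin(\pi/3)}{\sqrt{N} a_L / \lVert \mathbf{a}\rVert}, \tfrac{2 r_L^f \sin(2\pi/5)}{\sqrt{N} a_L/\lVert \mathbf{a}\rVert}\right]$, intersected with $(0,\infty)$ factors, i.e. the bar itself. Thus the barcode is a set of $N$ finite bars (one per mode with $r_L^f \neq 0$; modes with $r_L^f = 0$ contribute an empty or degenerate bar) whose endpoints are explicit rational-trigonometric functions of $\mathbf{a}$.

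First I would write down, for a fixed $\mathbf{a}_0 \in \mathbb{R}_{>0}^N$, the list of finite bars: define $b_L(\mathbf{a}) = \tfrac{2 r_L^f \sin(\pi/3) \lVert \mathbf{a}\rVert}{\sqrt{N} a_L}$ and $d_L(\mathbf{a}) = \tfrac{2 r_L^f \sin(2\pi/5) \lVert \mathbf{a}\rVert}{\sqrt{N} a_L}$ for $L = 1, \dots, N$ (discarding those $L$ with $r_L^f = 0$, of which there are a fixed number that does not change as $\mathbf{a}$ varies, since $r_L^f$ depends only on $f$). Set $p$ to be the number of such nonzero modes and $q = 0$ (there are no infinite bars in degree $1$ — the $(0,\infty)$ factors only appear in degree $0$). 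Then take $\tilde{B}(\mathbf{a}) = (b_1(\mathbf{a}), d_1(\mathbf{a}), \dots, b_p(\mathbf{a}), d_p(\mathbf{a}))$. On the domain $\mathbb{R}_{>0}^N$ each $a_L > 0$ and $\lVert \mathbf{a} \rVert > 0$, so $b_L$ and $d_L$ are compositions of smooth functions (the Euclidean norm, division, multiplication by constants) and hence $C^\infty$; therefore $\tilde{B}$ is smooth on all of $\mathbb{R}_{>0}^N$. By Theorem \ref{thm:EMPH} we have $B(\mathbf{a}) = Q_{p,0}(\tilde{B}(\mathbf{a}))$ for every $\mathbf{a}$, so the factorization holds on $U = \mathbb{R}_{>0}^N$ itself and $B$ is smooth by Definition \ref{def:smooth}.

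The main subtlety — and the reason the lemma singles out dimension one — is whether the combinatorial type of the barcode (the number $p$ of finite bars and the assignment of which analytic endpoint functions they are) stays constant in a neighborhood, so that a single smooth lift $\tilde{B}$ works. In degree $1$ this is automatic: the number of bars equals the number of modes $L$ with $r_L^f \neq 0$, a quantity fixed by $f$ and independent of $\mathbf{a}$, and no two bars need to be merged or permuted because the lift $\tilde{B}$ is allowed to list endpoints in any order (the map $Q_{p,q}$ forms an unordered set). In higher degrees the bars arise from intersecting intervals from several modes and the resulting intervals can become empty or change their ``surviving'' endpoint as $\mathbf{a}$ crosses a wall in the multi-parameter space, which is exactly where smoothness can fail; that is why this lemma is stated and proved separately. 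I should also note the edge behavior: if $r_L^f \ne 0$ then $b_L(\mathbf{a}) < d_L(\mathbf{a})$ for all $\mathbf{a} \in \mathbb{R}_{>0}^N$ since $\sin(\pi/3) < \sin(2\pi/5)$, so each listed pair is a genuine bar with positive length and no degeneracy occurs on the open positive orthant. This completes the argument; the only ``work'' is verifying smoothness of the elementary endpoint formulas, which is routine.
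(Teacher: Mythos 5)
Your argument follows the same route as the paper's proof: use the exact formula of Theorem \ref{thm:EMPH} to write the degree-one bars explicitly, package their endpoints into a single global lift $\tilde{B}$ built from elementary smooth functions on $\mathbb{R}^N_{>0}$, and conclude $B = Q_{p,0}\circ\tilde{B}$ via Definition \ref{def:smooth}. Your observation that the combinatorial type is stable in degree one (one bar per mode, no min/max competition between modes) and your explanation of why this fails in higher degree both match the paper (cf.\ Remark \ref{rmk:endpoint}).

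There is, however, a concrete computational slip in the endpoint formulas. For $n=1$ the unique surviving factor has $n_L = 2k+1$ with $k=0$, so Theorem \ref{thm:EMPH} gives the bar $\left(\frac{2r_L^f\sin(\pi\cdot 0/1)}{\sqrt{N}\,a_L/\lVert\mathbf{a}\rVert},\ \frac{2r_L^f\sin(\pi/3)}{\sqrt{N}\,a_L/\lVert\mathbf{a}\rVert}\right] = \left(0,\ \frac{\sqrt{3}\,r_L^f\,\lVert\mathbf{a}\rVert}{\sqrt{N}\,a_L}\right]$. The pair $\bigl(\sin(\pi/3),\sin(2\pi/5)\bigr)$ you wrote is the $k=1$ case, i.e.\ the bar contributed by $n_L=3$, not $n_L=1$. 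The paper's lift accordingly takes $b_L\equiv 0$ and $d_L(\mathbf{a})=\sqrt{3}\,r_L^f\lVert\mathbf{a}\rVert/(\sqrt{N}a_L)$, which makes the smoothness check even more immediate (the births are constant). Since the functions you wrote are also smooth on the open orthant, the slip does not break the logic of the proof, but it misidentifies the barcode being lifted and should be corrected. A minor cosmetic difference: the paper simply sets $p=N$ and does not discard modes with $r_L^f=0$ (these give the degenerate bar $(0,0]$); your more careful bookkeeping with a fixed number of nonzero modes is also acceptable.
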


\begin{proof}
Define $\tilde{B} : \mathbb{R}_{> 0}^N \rightarrow \mathbb{R}^{2N}$ by $\tilde{B}(\a) = \left(b_1(\a), d_1(\a), b_2(\a), d_2(\a), \cdots, b_N(\a), d_N(\a)\right)$, where $b_L, d_L : \mathbb{R}_{> 0}^N \rightarrow \mathbb{R}$ with $b_L(\mathbf{a}) = 0$ and $d_L(\mathbf{a}) = {\sqrt{3} r_L^f   \over \sqrt{N}a_L / \lVert \mathbf{a} \rVert}$. Then $\tilde{B}$ is clearly smooth map for every $\mathbf{a} \in \mathbb{R}_{> 0}^N$ and $B(\a) = \mathsf{bcd}_{1}^{\mathcal{R},\ell}(\Psi_{f}) =  Q_{N,0} \circ \tilde{B}(\a) $ is smooth.
\end{proof}

However, in higher dimensions, $B$ may not be smooth, and similarly, neural networks such as ReLU networks may also not be smooth. But this is not a crucial issue because these functions are locally Lipschitz, as we can consider the subgradient descent method, Lemma \ref{lemma:stochastic}.

\begin{remark}
\label{rmk:endpoint}
If $n > 1$, then $B : \mathbb{R}_{> 0}^N \rightarrow \mathsf{Bar}, \a \mapsto \mathsf{bcd}_{n}^{\mathcal{R},\ell}(\Psi_{f})$ is continuous but may not be smooth. For example, consider a ray $\ell(t) = \sqrt{N} t \cdot {\a \over \lVert \a \rVert}$. Since the minimum function is not smooth, $\mathsf{bcd}_{2}^{\mathcal{R},\ell}(\Psi_{f})=\Biggl\{ \left(0, \min	\left({r_{L_{i_1}}^f \over \sqrt{N}a_{i_1} / \lVert \mathbf{a} \rVert}, {r_{L_{i_2}}^f \over \sqrt{N}a_{i_2} / \lVert \mathbf{a} \rVert} \right) \sqrt{3}  \right]: 1 \le i_1 < i_2 \le N \Biggr\}$ is not smooth.
\end{remark}

Now, we introduce one of the main theorems, the existence theorem of a ray that minimizes the loss function $\mathcal{L}$ defined in Table \ref{table:definition}. The proof of the theorem, based on Lemma \ref{lemma:B} and Lemma \ref{lemma:V} in Section \ref{sec:convergence}, can show that $\mathcal{L} = V \circ B = \tilde{V} \circ \tilde{B}$ is locally Lipschitz.

\begin{theorem}[Variational principle of EMPH]
\label{thm:variation principle}
For a compact subset $\mathcal{C} \subset \mathbb{R}_{> 0}^N$, the function $\mathcal{L}|_{\mathcal{C}} : \mathcal{C} \rightarrow \mathbb{R}$ is continuous, ensuring that it has a minimum value.
\end{theorem}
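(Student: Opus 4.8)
The plan is to show that $\mathcal{L}$ is continuous on all of $\mathbb{R}_{>0}^N$ (or at least on the relevant domain), after which the conclusion is just the extreme value theorem applied to the restriction $\mathcal{L}|_{\mathcal{C}}$ on the compact set $\mathcal{C}$. So the real content is a continuity argument, and the strategy is to decompose $\mathcal{L}$ as a composition of maps and check continuity factor by factor. Following Figure \ref{fig:Pipeline_of_optimization2} and Table \ref{table:definition}, the loss function factors through: (i) the map $\mathbf{a} \mapsto \mathsf{bcd}_{*}^{\mathcal{R},\ell}(\Psi_f) \in \mathsf{Bar}$ (for the homological degrees used, say $n=0,1$, possibly higher), (ii) the vectorization $\I : \mathsf{Bar} \to \mathbb{R}^{r^2}$ given by the persistence image, (iii) the Min-Max scaling $\Phi$, (iv) the neural network $(\mathbf{W},\mathbf{b},\cdot)$, and finally (v) the task loss (e.g. cross-entropy). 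The first map is the only nonstandard one; the rest are continuous by inspection (persistence image is even Lipschitz by Proposition \ref{prop:persistence image}, neural nets with continuous activations are continuous, Min-Max scaling is continuous, and the task loss is smooth).

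For step (i), I would invoke the explicit barcode formula in Theorem \ref{thm:EMPH}. Each endpoint of each bar $J_L^{n_L,\ell}$ is of the form $\dfrac{2 r_L^f \sin(\pi \frac{k}{2k+1})}{\sqrt{N} a_L/\lVert \mathbf{a}\rVert}$, which is a continuous function of $\mathbf{a}$ on $\mathbb{R}_{>0}^N$ (the denominator $a_L/\lVert\mathbf{a}\rVert$ never vanishes there). Taking intersections of such intervals across the $N$ Fourier modes amounts to taking coordinatewise minima of upper endpoints (and maxima of lower endpoints), and min/max of finitely many continuous functions is continuous. The subtlety — which is exactly what Remark \ref{rmk:endpoint} flags — is that an interval $J = \bigcap_L J_L^{n_L,\ell}$ can degenerate to the empty set (or a point) when, e.g., the upper endpoint dips below the lower endpoint; so the number of bars $p$ in the output barcode is not locally constant, and the map $B$ is therefore not smooth (Lemma \ref{lemma:smooth} only handles $n=1$ where $b_L\equiv 0$). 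However, continuity into $\mathsf{Bar}$ with respect to the bottleneck (or $1$-Wasserstein) distance still holds: as a bar shrinks to length zero it contributes negligibly to the bottleneck distance, so these births/deaths of bars are continuous events in barcode space. Concretely, I would argue that $B$ is continuous by showing the composite $\I \circ B$ is continuous directly — the persistence image \eqref{eq:persistence image} is a finite sum of terms $\omega(d_i - b_i) g_{b_i,d_i}(x,y)$, each jointly continuous in $(b_i,d_i)$, and a bar entering or leaving the barcode does so precisely when $d_i - b_i \to 0$, at which point $\omega(d_i-b_i)\to \omega(0) = 0$ (taking the standard weight with $\omega(0)=0$), so the sum varies continuously even across these transitions. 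This sidesteps having to topologize $\mathsf{Bar}$ carefully.

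Putting it together: $\mathcal{L} = \text{(task loss)} \circ \text{(NN)}_{\mathbf{W},\mathbf{b}} \circ \Phi \circ \I \circ B$ is a composition of continuous maps in each variable jointly, hence continuous on $\mathbb{R}_{>0}^N$ (together with the Euclidean parameters $\mathbf{W},\mathbf{b}$, which are irrelevant here since the theorem fixes everything but $\mathbf{a}$ — or one can state continuity jointly and restrict). Restricting to the compact set $\mathcal{C} \subset \mathbb{R}_{>0}^N$, the extreme value theorem gives that $\mathcal{L}|_{\mathcal{C}}$ attains its minimum. I expect the main obstacle to be handling the non-local-constancy of the bar count cleanly — that is, making precise that the map $\mathbf{a}\mapsto \mathsf{bcd}_{*}^{\mathcal{R},\ell}(\Psi_f)$ is continuous despite not being smooth. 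The cleanest route, as above, is to absorb this into the composite $\I\circ B$ using the vanishing of the weight function on zero-length bars, rather than arguing abstractly about the topology of $\mathsf{Bar}$; alternatively one cites that $\I : (\mathsf{Bar}, W_1)\to\mathbb{R}^{r^2}$ is Lipschitz (Proposition \ref{prop:persistence image}) and checks $B$ is continuous into $(\mathsf{Bar}, W_1)$ using the endpoint formulas. A minor point to watch is that $\mathcal{C}$ must be a compact subset of the \emph{open} set $\mathbb{R}_{>0}^N$, so that no $a_L$ approaches $0$; this is built into the hypothesis and is what keeps all the denominators bounded away from zero.
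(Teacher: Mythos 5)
Your proposal is correct and follows essentially the same route as the paper: the paper does not give a separate proof of Theorem \ref{thm:variation principle}, but its justification is exactly the decomposition $\mathcal{L} = \tilde{V}\circ\tilde{B}$ together with Lemmas \ref{lemma:B} and \ref{lemma:V}, which establish the stronger locally Lipschitz property (hence continuity) of each factor by writing the bar endpoints as maxima/minima of the smooth functions $b_{L,n_L}, d_{L,n_L}$ and using that the weight in the persistence image vanishes on zero-length bars. Your handling of the non-constant bar count via $\omega(0)=0$ in the composite $\I\circ B$ is the same mechanism the paper uses implicitly through the lift $Q_{p,q}\circ\tilde{B}$ with a fixed number of coordinate pairs.
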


This theorem signifies that there is an optimal ray, but, \eqref{eq:objective} is not convex in general. Thus the uniqueness of the optimal ray is not guaranteed. Following this theorem, the next step is to demonstrate how we can find such a ray.

\subsubsection{Convergence analysis}
\label{sec:convergence}
In the optimization problem presented in \eqref{eq:objective}, it is still possible to find a feasible solver even if $\mathcal{L}$ is neither smooth nor convex which includes the case mentioned in Remark \ref{rmk:endpoint}, and the case that the activation function is non-smooth. Therefore, it is necessary to use generalized gradient descent to minimize the nonsmooth, nonconvex loss function. The Clarke subgradient serves as a generalization of the gradient for nonsmooth and nonconvex functions. According to Rademacher's theorem, a locally Lipschitz function $f:\R^n \rightarrow \R$ has a Clarke subgradient. If $f$ is smooth, then the Clarke subgradient is the gradient of $f$. If $f$ is a convex function, then it is the subgradient of $f$ \cite{clarke1975generalized}. The following lemma guarantees the convergence of the projected stochastic subgradient descent method using the Clarke subgradient. From this point, we utilize the properties of definable functions to prove Theorem \ref{thm:convergence}. In an o-minimal structure, a definable function, roughly speaking, satisfies the properties of a piecewise smooth function, and a semi-algebraic function is a special case of a definable function. The basic concepts of o-minimal structures and definable functions are summarized in Appendix \ref{appendix:o-minimal}.

\begin{lemma}[Corollary 6.4, \cite{davis2020stochastic}]
\label{lemma:stochastic}
Let $f:\mathbb{R}^d \rightarrow \mathbb{R}$ be both a locally Lipschitz function and a definable function. Consider the projected stochastic subgradient descent method
\begin{equation*}
x_{k+{1 \over 2}} = x_k - \alpha_k \zeta (x_k, \omega_k) \quad \text{(Gradient step)}, 
\end{equation*}
\begin{equation*}
x_{k+1} = \mathsf{proj}_{\mathcal{X}} \left(x_{k+{1 \over 2}}\right) \quad \text{(Projection step)}, 
\end{equation*}
where $\left\{ \zeta (x_k, \omega_k) \right\}_k$ is a stochastic estimator for the Clarke subgradient of $f$ at $x_k$, $\left\{ \alpha_k \right\}_{k \ge 1}$ is a sequence of step-sizes, $\mathcal{X}$ is a closed and definable constraint region, and $\mathsf{proj}_{\mathcal{X}}: \R^d \rightarrow \mathcal{X}$ is a projection map. If the following conditions are satisfied, then $x_k$ almost surely converges to Clarke stationary point:
\begin{enumerate}
    \item (Proper step-size) $\alpha_k \ge 0$, $\sum\limits_{k=1}^{\infty} \alpha_k = \infty$ and $\sum\limits_{k=1}^{\infty} \alpha_k^2 < \infty$.
    \item (Boundedness) $\sup\limits_{k \ge 1} \lVert x_k \rVert < \infty$
    \item (Noise constraint) There exists a function $p : \R^d \rightarrow \R_{> 0}$, that is bounded on bounded sets such that the expectation satisfies that  $\mathbb{E}_{\omega}	\left[\zeta(x,\omega)\right] \in \partial f (x), \mathbb{E}_{\omega}  \left[ \lVert \zeta(x_k, \omega_k) \rVert^2 \right] \le p(x)$ for all $x \in \mathcal{X}$ and $\mathbb{E}_{\omega} 	\left[\sup\limits_{k \ge 1} \lVert \zeta(z_k,\omega) \rVert \right] < \infty$ for every convergent sequence $	\left\{ z_k \right\}_{k \ge 1}$. 
\end{enumerate}
\end{lemma}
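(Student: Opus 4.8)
The statement is the almost-sure convergence theorem for projected stochastic subgradient descent on locally Lipschitz definable functions (quoted here as Corollary 6.4 of \cite{davis2020stochastic}), so the plan is to reconstruct it via the differential-inclusion (``ODE method'') approach to stochastic approximation, with the o-minimal structure doing the essential work.

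First I would isolate the two structural consequences of definability that are used. (i) \emph{Path differentiability}: a locally Lipschitz definable $f:\R^d\to\R$ has the property that for every absolutely continuous curve $\gamma:[0,T]\to\R^d$ the composition $t\mapsto f(\gamma(t))$ is differentiable almost everywhere with $\frac{d}{dt}f(\gamma(t))=\langle v,\dot\gamma(t)\rangle$ for \emph{every} $v\in\partial f(\gamma(t))$. This comes from the Whitney stratification of the graph of $f$ available in any o-minimal structure: on each stratum the Clarke subgradient contains the (relative) gradient, and an absolutely continuous curve spends almost all its time with $\dot\gamma$ tangent to the stratum it lies in. (ii) \emph{Definable Sard theorem}: the set of Clarke-critical values of $f$ on the closed definable constraint set $\mathcal{X}$ is finite, hence has empty interior.

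Next I would pass to the limiting dynamics. Since the recursion is $x_{k+1}=\mathsf{proj}_{\mathcal{X}}\bigl(x_k-\alpha_k\zeta(x_k,\omega_k)\bigr)$, the relevant continuous-time object is the projected differential inclusion $\dot{x}(t)\in -\partial f(x(t)) - N_{\mathcal{X}}(x(t))$, where $N_{\mathcal{X}}$ is the (Clarke) normal cone; its equilibria are exactly the Clarke stationary points of $f$ on $\mathcal{X}$. Using path differentiability together with the projection formula I would check that $f$ is a \emph{strict} Lyapunov function for this inclusion: along any solution $x(\cdot)$ one has $\frac{d}{dt}f(x(t))\le 0$ for a.e.\ $t$, with strict decrease off the stationary set. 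Closedness and definability of $\mathcal{X}$ are what keep the normal-cone term definable so that this computation stays inside the o-minimal world.

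Finally I would run the Benaïm--Hofbauer--Sorin stochastic-approximation machinery for differential inclusions. Hypothesis (1) (proper step sizes) and hypothesis (3) (the estimator is a conditionally unbiased selection of $\partial f$ with controlled second moment, uniformly integrable along convergent subsequences) imply that the linearly interpolated iterates are almost surely an asymptotic pseudotrajectory of the projected inclusion; hypothesis (2) (boundedness of $\{x_k\}$) confines the trajectory to a compact set, so its limit set is nonempty, compact, connected, and internally chain transitive. An internally chain transitive set on which a strict Lyapunov function is constant must consist of equilibria, and by the definable Sard theorem the Lyapunov (= critical) values have empty interior, which rules out a nondegenerate continuum; hence every such limit set lies in the Clarke stationary set, giving $x_k\to$ stationary points almost surely. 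The main obstacle --- and the point where definability is genuinely needed rather than cosmetic --- is the Lyapunov/chain-rule step: without path differentiability the Clarke subgradient need not force $f$ to decrease along trajectories, and without definable Sard the argument could stall on a positive-measure set of critical values.
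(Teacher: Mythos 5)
The paper offers no proof of this lemma; it is imported verbatim as Corollary 6.4 of \cite{davis2020stochastic}. Your sketch faithfully reconstructs the argument of that cited source --- path differentiability via Whitney stratification of definable functions, the definable Sard theorem, the projected differential inclusion with $f$ as a Lyapunov function, and the Bena\"im--Hofbauer--Sorin asymptotic-pseudotrajectory machinery under hypotheses (1)--(3) --- so it is correct and takes essentially the same route as the reference the paper relies on.
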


\vspace{0.5cm}
To apply Lemma \ref{lemma:stochastic}, we need to prove that $\mathcal{L}$ is both locally Lipschitz and definable. Our strategy involves verifying that both $\tilde{B}$ and $\tilde{V}$ are locally Lipschitz and definable functions, which leads to $\mathcal{L}$ being locally Lipschitz and definable. In order to prove the main theorem of this section, Theorem \ref{thm:convergence}, we use Lemmas \ref{lemma:B} and \ref{lemma:V}. Lemma \ref{lemma:B} demonstrates that $\tilde{B}$ is locally Lipschitz and definable. Lemma \ref{lemma:V} shows that $\tilde{V}$ is locllay Lipschitz and definable. The properties of definable function are detailed in Appendix \ref{appendix:o-minimal}, and those minor propositions regarding locally Lipschitz functions along with their proofs are given in Appendix \ref{appendix:propositions}.

\begin{lemma} 
\label{lemma:B}
Let $\tilde{B} : \R^N_{>0} \rightarrow \R^{2p} \times \R^{q}$ be a map that satisfies $B = Q_{p,q} \circ \tilde{B}$. Then $\tilde{B}$ is locally Lipschitz and definable.
\end{lemma}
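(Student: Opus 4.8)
The plan is to build $\tilde{B}$ explicitly from the exact barcode formula in Theorem~\ref{thm:EMPH} and then check the two required properties componentwise. Recall from Lemma~\ref{lemma:smooth} and the surrounding discussion that along a ray $\ell(t)=\sqrt{N}\,t\cdot \mathbf{a}/\lVert\mathbf{a}\rVert$ the bars of $\mathsf{bcd}_{n}^{\mathcal{R},\ell}(\Psi_f)$ are obtained by intersecting the one-factor bars $J_L^{n_L,\ell}$, whose endpoints are of the form $c_L/(\sqrt{N}a_L/\lVert\mathbf{a}\rVert)$ for explicit constants $c_L=c_L(r_L^f,k)$; and intersecting two half-open intervals $(\alpha_1,\beta_1]\cap(\alpha_2,\beta_2]$ amounts to the operations $\max$ on left endpoints and $\min$ on right endpoints. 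So first I would fix the neural-network-relevant homological degree(s) and the finite list of bars that can occur (there are finitely many since $N$ and the truncation order are fixed), and for each such bar write its birth and death as a finite composition of: (i) the coordinate projections $\mathbf{a}\mapsto a_L$, (ii) the map $\mathbf{a}\mapsto \lVert\mathbf{a}\rVert = \sqrt{a_1^2+\cdots+a_N^2}$, (iii) division, and (iv) $\min$ and $\max$ of two arguments. Collecting these as the components of $\tilde{B}$ gives a map $\R^N_{>0}\to\R^{2p}\times\R^q$ with $Q_{p,q}\circ\tilde{B}=B$ (the $q$ infinite bars being the constant birth-value $0$ coming from the $n_L=0$ factors).

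Next I would verify definability. Each building block is semi-algebraic: projections and the squared norm are polynomial, the square root is semi-algebraic, division on $\R_{>0}$ is semi-algebraic (it is the solution set of $xy=z$), and $\min,\max$ are semi-algebraic (piecewise linear). Since semi-algebraic functions are definable in any o-minimal structure and are closed under composition and finite tupling, $\tilde{B}$ is definable; here I would cite the relevant closure properties collected in Appendix~\ref{appendix:o-minimal}.

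Then I would verify the locally Lipschitz property. On the open set $\R^N_{>0}$, and in fact on any compact subset thereof, $\lVert\mathbf{a}\rVert$ is smooth with bounded derivative, $1/a_L$ is smooth with derivative bounded away from blow-up since $a_L$ stays bounded below by a positive constant on a compact set, and $\min,\max$ are globally $1$-Lipschitz (in the sup norm, hence Lipschitz in any norm). A finite composition and finite Cartesian product of locally Lipschitz maps is locally Lipschitz (Appendix~\ref{appendix:propositions}), so $\tilde{B}$ is locally Lipschitz on $\R^N_{>0}$.

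The only genuine subtlety, and the step I expect to need the most care, is the appearance of $\min$ and $\max$ in the barcode formula for degrees $n>1$ (as flagged in Remark~\ref{rmk:endpoint}): these break smoothness, so the argument must be organized so that one never needs more than Lipschitz regularity and definability of these operations, rather than differentiability. Concretely I would make sure the decomposition of each death value as a $\min$ of finitely many smooth functions (and each birth value as a $\max$) is written out once, so that local Lipschitzness follows from the $1$-Lipschitz property of $\min/\max$ composed with smooth-hence-locally-Lipschitz inner functions, and definability follows from the piecewise-polynomial description of $\min/\max$. A minor companion point is that $\tilde{B}$ is only defined on the open domain $\R^N_{>0}$, so ``locally Lipschitz'' is understood there; this matches the hypotheses of Lemma~\ref{lemma:stochastic} once the constraint region is taken inside $\R^N_{m,M}$.
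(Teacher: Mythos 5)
Your proposal is correct and follows essentially the same route as the paper: it writes each birth as a maximum and each death as a minimum of the explicit smooth, semialgebraic endpoint functions $b_{L,n_L}(\a)$, $d_{L,n_L}(\a)$ from Theorem \ref{thm:EMPH}, then invokes the closure of locally Lipschitz functions under $\min/\max$ and composition (Propositions \ref{prop:locally Lipschitz} and \ref{prop:locally Lipschitz2}) and the closure of semialgebraic functions under the same operations together with definability of semialgebraic maps (Propositions \ref{prop:semi properties}, \ref{prop:min semi}, \ref{prop:semi-definable}). The subtlety you flag about $\min/\max$ destroying smoothness for $n>1$ is exactly why the paper works at the level of Lipschitz regularity and definability rather than differentiability.
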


\begin{proof}
Let 
\begin{align}
b_{L,n_L}(\a) = \begin{cases}
{2r_L^f \sin\left(\pi {k \over 2k+1}\right)  \over \sqrt{N}a_L / \lVert \mathbf{a} \rVert}, & \mbox{if }n_L=2k+1 \\
0, & \mbox{otherwise }
\end{cases}, 
\label{eq:b}
\\
d_{L,n_L}(\a) = \begin{cases}
{2r_L^f \sin\left(\pi {k+1 \over 2k+3}\right)  \over \sqrt{N}a_L / \lVert \mathbf{a} \rVert}, & \mbox{if }n_L=2k+1 \\
\infty, & \mbox{otherwise }
\end{cases}.
\label{eq:d}
\end{align}

Note that for any $(b,d] \in \mathsf{bcd}_{n}^{\mathcal{R},\ell}(\Psi_{f})$, $b$ is expressed by the maximum of $\left\{b_{L,n_L} \right\}_{L}$ and $d$ is expressed by the minimum of $\left\{d_{L,n_L}\right\}_{L}$ in Theorem \ref{thm:EMPH}. Since $b_{L,n_L}$ and $d_{L,n_L}$ are smooth functions on a compact subset of $\R_{> 0}^N$, they are locally Lipschitz by Proposition \ref{prop:locally Lipschitz}. By Proposition \ref{prop:locally Lipschitz2}, both $b$ and $d$ are locally Lipschitz. Similarly, the maps $b_{L,n_L}$ and $d_{L,n_L}$ are semialgebraic by Proposition \ref{prop:semi properties}, and their maximum and minimum are also semialgebraic by Proposition \ref{prop:min semi}. Therefore, by Proposition \ref{prop:semi-definable}, $\tilde{B}$ is definable.
\end{proof}

\begin{lemma}
\label{lemma:V}
$\tilde{V}= l(y, \cdot) \circ \phi \circ NN \circ  \Phi \circ \I \circ Q_{p,q}$ is locally Lipschitz and definable.

\end{lemma}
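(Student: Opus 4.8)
The plan is to use the fact that $\tilde V = l(y,\cdot)\circ\phi\circ NN\circ\Phi\circ\I\circ Q_{p,q}$ is a finite composition, together with closure of the classes ``locally Lipschitz'' and ``definable'' under composition (the basic calculus of o-minimal structures recalled in Appendix \ref{appendix:o-minimal}, and the elementary facts collected in Appendix \ref{appendix:propositions}). It therefore suffices to show, factor by factor, that each of $Q_{p,q}$, $\I$, $\Phi$, $NN$, $\phi$, and $l(y,\cdot)$ is locally Lipschitz and definable; chaining the six statements then proves the lemma, which together with Lemma \ref{lemma:B} is what is needed for Theorem \ref{thm:convergence}.

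Before that, one preliminary choice has to be made. The Gaussian kernels $g_{b_i,d_i}$ inside $\I$, the activation functions in $NN$ and $\phi$, and (if a cross-entropy loss is used) the logarithm in $l(y,\cdot)$ are not semialgebraic, so I would fix once and for all an o-minimal structure containing the exponential function --- $\R_{\exp}$, o-minimal by Wilkie's theorem, or $\R_{\mathrm{an},\exp}$ if one wants bounded-analytic pieces in the weight function as well. Every semialgebraic map is definable in this structure (Proposition \ref{prop:semi-definable}), and all the transcendental pieces just listed are built from $\exp$ and hence also definable in it.

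Now the factors. For $\I\circ Q_{p,q}$: as a function of the bar endpoints, $I_{\mathcal B}(x,y)=\sum_i \omega(d_i-b_i)\,g_{b_i,d_i}(x,y)$ is a finite sum of products of the smooth (hence, on compacta, locally Lipschitz by Proposition \ref{prop:locally Lipschitz}) and $\R_{\exp}$-definable Gaussians with the terms $\omega(d_i-b_i)$; assuming $\omega$ is itself locally Lipschitz and definable --- true for the linear weight $\omega(t)=t$ and for piecewise-linear weights, which are semialgebraic --- closure of both classes under finite sums and products gives the claim, and the infinite bars in the $\R^q$ factor contribute nothing, exactly as in the definition of $Q_{p,q}$. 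For $\Phi$: Min--Max scaling sends $v\mapsto\bigl((v_i-\min_j v_j)/(\max_j v_j-\min_j v_j)\bigr)_i$; $\min$ and $\max$ are semialgebraic and locally Lipschitz (Propositions \ref{prop:min semi} and \ref{prop:locally Lipschitz2}), and the quotient is semialgebraic and smooth, hence locally Lipschitz, on the open set where the denominator does not vanish, which contains the actual range of $\I$ in all non-degenerate cases. For $NN$: a feed-forward network alternates affine maps (polynomial, hence semialgebraic and smooth) with activations that are either piecewise-linear (ReLU: semialgebraic and locally Lipschitz) or $\R_{\exp}$-definable and smooth (sigmoid, $\tanh$), so $NN$ is locally Lipschitz and definable; the same reasoning covers $\phi$. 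Finally, $l(y,\cdot)$ is polynomial for the MSE loss, and for cross-entropy it is a finite sum of logarithms, which is locally Lipschitz and $\R_{\exp}$-definable on the open simplex into which the softmax output falls.

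The main obstacle is exactly the appearance of the Gaussian kernel (and, in some variants, the logarithm and sigmoid-type activations): these force us out of the semialgebraic category, so the real content of the lemma is the observation that one can still stay inside a single o-minimal structure, $\R_{\exp}$. The one genuinely delicate point is the domain issue for the maps with denominators or logarithms ($\Phi$ and the cross-entropy loss), where ``locally Lipschitz'' holds only on an open subset, and one must verify that this subset contains the range of the preceding map in the pipeline.
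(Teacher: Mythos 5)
Your proposal is correct and follows essentially the same route as the paper: a factor-by-factor verification that each map in the composition is locally Lipschitz and definable in an o-minimal structure containing $\exp$, followed by closure under composition. The paper is terser (it cites Proposition \ref{prop:persistence image} and a result of \cite{leygonie2021framework} for the Lipschitz bounds on $\I$ and $Q_{p,q}$ rather than re-deriving them, and does not dwell on the domain caveats for $\Phi$ and the cross-entropy), but the decomposition and the key ingredients are the same.
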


\begin{proof}
\begin{enumerate}
\item $l(y, \cdot) \circ \phi$ is smooth function on a compact set. By Proposition \ref{prop:locally Lipschitz}, it is locally Lipschitz.
\item $NN$ is locally Lipschitz since it consists of affine transformations and ReLU functions.
\item $\I$ is locally Lipschitz, as established by Proposition \ref{prop:persistence image}.
\item $Q_{p,q}$ is locally Lipschitz by Proposition 3.2 in \cite{leygonie2021framework}.
\item $\I \circ Q_{p,q}, NN, \phi$ and $l(y,\cdot)$ are definable functions: \\
Since the sum and product of definable functions are definable and the exponential function is definable, $\I \circ Q_{p,q} \left( (b_1,d_1), \cdots, (b_p, d_p) \right) = {1 \over 2\pi\sigma^2} \sum\limits_{i=1}^p (d_i - b_i) e^{-{{\left[ (x-b_i)^2 + (y - (d_i - b_i))^2 \right]} \over 2\sigma^2}}$ is also definable. According to Corollary 5.11 in \cite{davis2020stochastic}, ReLU, softmax, and cross entropy functions are definable.
\item Claerly, $\Phi$ is locally Lipschitz and definable.
\end{enumerate}
Since every component of $\tilde{V}$ consists of locally Lipschitz functions and definable functions, $\tilde{V}$ is locally Lipschitz and definable. \end{proof}

For simplicity, define $\boldsymbol{\rho}$ as $\boldsymbol{\rho}(\a) = {\a \over \lVert \a \rVert}$. Now, consider the following update rule:
\begin{equation}
\begin{aligned}
W^{[l]}_{k+1} &= W^{[l]}_{k} - \alpha_k {\partial \mathcal{L} \over \partial {W^{[l]}_{k}}}, \\
b^{[l]}_{k+1} &= b^{[l]}_{k} - \alpha_k 
{\partial \mathcal{L} \over \partial {b^{[l]}_{k}}}, \\
\a_{k+{1 \over 2}} &= \a_k - \alpha_k
{\partial \mathcal{L} \over \partial {\a_{k}}}, \\
\a_{k+1} &= \mathsf{proj}_{\R^N_{m,M}} \left(\boldsymbol{\rho}\left(\a_{k+{1 \over 2}}\right)\right).
\end{aligned}
\label{eq:update rule}
\end{equation}

As mentioned earlier, the problem \eqref{eq:objective} is inherently nonconvex and nonsmooth. Thus our aim is to find a stationary point within the loss function. Note that Clarke stationary points are not necessarily local minima \cite{li2020understanding}.

Let $\mathbf{W}_k := \left( W^{[1]}_{k}, \cdots, W^{[h]}_{k}\right)$ and $\mathbf{b}_k := \left( b^{[1]}_{k}, \cdots, b^{[h]}_{k}\right)$. We solve the problem \eqref{eq:objective} based on the following theorem.

\begin{theorem}
\label{thm:convergence}
The sequences $\mathbf{W}_k, \mathbf{b}_k$ and $\a_k$ in \eqref{eq:update rule} converge to Clarke stationary points of $\mathcal{L}$.
\end{theorem}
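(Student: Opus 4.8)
The plan is to verify that the update rule \eqref{eq:update rule} is an instance of the projected stochastic subgradient descent method of Lemma \ref{lemma:stochastic}, applied to the objective $\mathcal{L}$ of \eqref{eq:objective}, and then to check the three hypotheses of that lemma one by one. The first order of business is to establish that $\mathcal{L}$ is locally Lipschitz and definable as a function of the full parameter tuple $(\mathbf{W},\mathbf{b},\a)$. This follows by composing the results already in hand: Lemma \ref{lemma:B} gives that $\tilde{B}$ is locally Lipschitz and definable, Lemma \ref{lemma:V} gives the same for $\tilde{V}$, and since $\mathcal{L} = \tilde{V}\circ\tilde{B}$ (with the neural-network weights entering $\tilde{V}$ smoothly, hence locally Lipschitz and definably), the composition is locally Lipschitz and definable because these two classes are closed under composition. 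By Rademacher's theorem $\mathcal{L}$ therefore admits a Clarke subgradient everywhere, so the update directions $\partial\mathcal{L}/\partial W^{[l]}_k$, $\partial\mathcal{L}/\partial b^{[l]}_k$, $\partial\mathcal{L}/\partial \a_k$ are well defined as (selections from) the Clarke subdifferential, and in the deterministic case we may take the estimator $\zeta(x_k,\omega_k)$ to be this subgradient itself (so the noise is trivial, or one uses mini-batch sampling over the data set in the standard way).

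\textbf{Checking the constraint set.} Next I would identify $\mathcal{X}$ with $\R^{(\text{weights})}\times \R^{(\text{biases})}\times \R^N_{m,M}$, so that the projection step $\a_{k+1} = \mathsf{proj}_{\R^N_{m,M}}(\boldsymbol{\rho}(\a_{k+1/2}))$ realizes exactly the projection onto $\mathcal{X}$ — note the normalization $\boldsymbol{\rho}$ is harmless because $\R^N_{m,M}$ is (by construction) a set defined by bounds that is invariant under the rescaling, or one absorbs $\boldsymbol{\rho}$ into the definition of the projection; either way the composed map $\mathsf{proj}_{\R^N_{m,M}}\circ\boldsymbol{\rho}$ lands in a closed and semialgebraic, hence definable, subset of $\R^N$. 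The weight and bias coordinates are unconstrained, which is permitted. Thus the algorithm \eqref{eq:update rule} has exactly the form required by Lemma \ref{lemma:stochastic}.

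\textbf{Checking the three conditions.} Condition (1), the Robbins–Monro step-size condition $\alpha_k\ge 0$, $\sum\alpha_k=\infty$, $\sum\alpha_k^2<\infty$, is a hypothesis we impose on the schedule (e.g. $\alpha_k = c/k$), so nothing is to prove. Condition (3), the noise constraint, holds because $\mathbb{E}_\omega[\zeta(x,\omega)]\in\partial\mathcal{L}(x)$ by our choice of a (mini-batch) unbiased estimator, and the second-moment bound $\mathbb{E}_\omega[\lVert\zeta(x_k,\omega_k)\rVert^2]\le p(x)$ with $p$ bounded on bounded sets follows from local Lipschitzness of $\mathcal{L}$ (the Clarke subgradient is locally bounded) together with finiteness of the data set; the uniform integrability clause along convergent sequences is likewise a consequence of local boundedness of $\partial\mathcal{L}$. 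The delicate point is Condition (2), the a priori boundedness $\sup_k\lVert x_k\rVert<\infty$: the $\a$-coordinate is automatically bounded since it is confined to the compact-after-normalization set $\R^N_{m,M}$, but the weights $\mathbf{W}_k$ and biases $\mathbf{b}_k$ are not constrained, so one must argue they stay bounded. \textbf{This is the main obstacle.} I would handle it either by noting that $\mathcal{L}$ is coercive in $(\mathbf{W},\mathbf{b})$ under the chosen network architecture and loss (so iterates with decreasing objective remain in a sublevel set — though subgradient descent is not monotone, one can invoke the descent-with-vanishing-steps estimates of \cite{davis2020stochastic}), or, more cleanly, by imposing an explicit bounded constraint region on the weights as well (replacing $\R^{(\text{weights})}$ by a large closed ball), which is standard in this setting and keeps the constraint region closed and definable; with that modification Condition (2) is immediate. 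Once all three conditions are verified, Lemma \ref{lemma:stochastic} yields that $(\mathbf{W}_k,\mathbf{b}_k,\a_k)$ converges almost surely to a Clarke stationary point of $\mathcal{L}$, which is the assertion of Theorem \ref{thm:convergence}.
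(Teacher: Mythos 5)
Your proposal follows essentially the same route as the paper: the paper's entire proof is the one-line observation that $V \circ B = V \circ Q_{p,q} \circ \tilde{B} = \tilde{V} \circ \tilde{B}$ followed by a citation of Lemmas \ref{lemma:B}, \ref{lemma:V} and \ref{lemma:stochastic}. Your more detailed verification of the hypotheses of Lemma \ref{lemma:stochastic} — in particular your observation that condition (2), boundedness of the unconstrained weight and bias iterates, does not come for free and requires either a coercivity argument or an explicit bounded (closed, definable) constraint region on $(\mathbf{W},\mathbf{b})$, and likewise your remark that the composed map $\mathsf{proj}_{\R^N_{m,M}}\circ\boldsymbol{\rho}$ must be reconciled with the projection step of the lemma — addresses points the paper's proof passes over in silence, so your treatment is if anything more careful than the original.
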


\begin{proof}
Note that $V \circ B = V \circ Q_{p,q} \circ \tilde{B} = \tilde{V} \circ \tilde{B}$. By Lemmas \ref{lemma:B}, \ref{lemma:V} and \ref{lemma:stochastic}, this statement is true.
\end{proof}

\subsubsection{Exact formula of gradient}
\label{sec:Exact formula of gradient}
In this section, we discuss a method that updates efficiently the filtration ray in the backward process. Figure \ref{fig:Pipeline_of_optimization2} illustrates that the exact barcode formula allows efficient computation of the forward process. But the computation of the barcode involves multiple intermediate steps, which makes the procedure of updating the direction through automatic differentiation inefficient. By using the exact gradient of the loss function with respect to the direction vector, we can update the direction vector directly without relying on automatic differentiation, thereby allowing efficient computation of the backward process (e.g. see Example \ref{Ex:time}).

\begin{lemma}
\label{lemma:derivative_rho}
The derivative of $\boldsymbol{\rho}(\a)={\a \over \lVert \a \rVert} = (\rho_1, \cdots, \rho_N)$ is given by the following:
$${\partial \boldsymbol{\rho} \over \partial \a} = {1 \over \lVert \a \rVert} \mathbf{1} - {1 \over \lVert \a \rVert^3} \a^T \a \in \mathbb{R}^{N \times N}.$$
Here, we regard $\a \in \mathbb{R}^{1 \times N}$ as a row vector and $\mathbf{1}$ is an identity matrix.
\end{lemma}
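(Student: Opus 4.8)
The plan is to compute the Jacobian of the normalization map $\boldsymbol{\rho}(\a) = \a/\lVert\a\rVert$ entrywise and then recognize the resulting expression as the stated matrix form. Write $\rho_i(\a) = a_i \lVert\a\rVert^{-1}$, where $\lVert\a\rVert = \left(\sum_{k} a_k^2\right)^{1/2}$. First I would record the elementary fact that $\partial \lVert\a\rVert / \partial a_j = a_j / \lVert\a\rVert$, which follows from the chain rule applied to the square root. This is the only nontrivial ingredient, and it is entirely routine.

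Next I would differentiate $\rho_i = a_i \lVert\a\rVert^{-1}$ with respect to $a_j$ using the product rule:
\begin{equation*}
\frac{\partial \rho_i}{\partial a_j} = \frac{\partial a_i}{\partial a_j}\,\lVert\a\rVert^{-1} + a_i \cdot \left(-\lVert\a\rVert^{-2}\right)\frac{\partial \lVert\a\rVert}{\partial a_j} = \frac{\delta_{ij}}{\lVert\a\rVert} - \frac{a_i a_j}{\lVert\a\rVert^{3}},
\end{equation*}
where $\delta_{ij}$ is the Kronecker delta and I have substituted the derivative of the norm from the previous step. The first term $\delta_{ij}/\lVert\a\rVert$ is exactly the $(i,j)$ entry of $\frac{1}{\lVert\a\rVert}\mathbf{1}$, and the second term $a_i a_j / \lVert\a\rVert^3$ is the $(i,j)$ entry of $\frac{1}{\lVert\a\rVert^3}\a\a^T$. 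Assembling these entrywise identities into matrix form yields
\begin{equation*}
\frac{\partial \boldsymbol{\rho}}{\partial \a} = \frac{1}{\lVert\a\rVert}\mathbf{1} - \frac{1}{\lVert\a\rVert^{3}}\a\a^T,
\end{equation*}
which is the claimed formula. One should note that $\boldsymbol{\rho}$ is smooth on $\R^N \setminus \{\mathbf{0}\}$, so the computation is valid wherever $\a \neq \mathbf{0}$, which is always the case in our setting since $\a \in \R^N_{m,M}$ with $m > 0$.

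There is essentially no main obstacle here: the statement is a standard differential-geometric computation (the Jacobian of radial projection onto the sphere, whose image is the tangent projection scaled by $1/\lVert\a\rVert$), and the only point requiring care is the bookkeeping of indices and the correct power of $\lVert\a\rVert$ in the rank-one correction term. If a more conceptual phrasing is preferred, one can observe that $\boldsymbol{\rho}$ restricted to any sphere is locally constant in the radial direction, so $\frac{\partial\boldsymbol{\rho}}{\partial\a}$ annihilates $\a$ (indeed $\left(\frac{1}{\lVert\a\rVert}\mathbf{1} - \frac{1}{\lVert\a\rVert^3}\a\a^T\right)\a = \frac{\a}{\lVert\a\rVert} - \frac{\a\lVert\a\rVert^2}{\lVert\a\rVert^3} = 0$), which serves as a useful consistency check on the formula.
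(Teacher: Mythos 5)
Your proposal is correct and follows essentially the same route as the paper's proof: both compute $\partial\rho_i/\partial a_j$ entrywise via the product rule and the derivative of the norm, the only cosmetic difference being that you use the Kronecker delta where the paper splits into the cases $i=L$ and $i\ne L$. The consistency check that the Jacobian annihilates $\a$ is a nice addition but does not change the argument.
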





\begin{theorem}[Exact formula of gradients]
\label{thm:gradient}
The explicit forms of the gradients in \eqref{eq:update rule} are as follows:
\begin{equation}
\begin{aligned}
{\partial \mathcal{L} \over \partial W^{[l]}} &= \left(\delta^{[l]}\right)^T \left(g^{[l-1]}(z_{l-1})\right)^T, \\
{\partial \mathcal{L} \over \partial b^{[l]}} &= \left(\delta^{[l]}\right)^T,\\
{\partial \mathcal{L} \over \partial \a} &= \left(\delta^{[1]} W^{[1]}  \right)  \left( {1 \over \I_{\max} - \I_{\min}} \right) 
\left({\partial \I \over \partial \mathsf{b}}{\partial \mathsf{b} \over \partial \boldsymbol{\rho}} + {\partial \I \over \partial \mathsf{d}}{\partial \mathsf{d} \over \partial \boldsymbol{\rho}} \right)
 \left( {1 \over \lVert \mathbf{a} \rVert} \mathbf{1} - {1 \over \lVert \mathbf{a} \rVert^3} \mathbf{a}^T \mathbf{a} \right),   
\end{aligned}
\label{eq:gradients}
\end{equation}
where $z_l = W^{[l]} g^{[l-1]}(z_{l-1}) + b^{[l]}, \delta^{[l]} = {\partial \mathcal{L} \over \partial z_{l}}$ and the persistence image $ z_0 = \I = (I_1, \cdots, I_{r^2}),$ with $\I_{max} = \max (I_1, \cdots, I_{r^2}),  \I_{min} = \min (I_1, \cdots, I_{r^2})$. Additionally, $\mathsf{b} = (b_1, \cdots, b_p)$ and $\mathsf{d} = (d_1, \cdots, d_p)$ represent the coordinates of birth and death times as defined in Definition \ref{def:smooth}, and $\boldsymbol{\rho}(\a) = {\a \over \lVert \a \rVert} = (\rho_1, \cdots, \rho_N)$.

Each component in ${\partial \mathcal{L} \over \partial \a}$ is
\begin{align}
{\partial I_{v} \over \partial b_j} =& -\sum\limits_{(b_i,d_i] \in \mathcal{B}} g_{b_i, d_i} (x_v,y_v) \nonumber\\
+&(d_j - b_j) \cdot {d_j - 2b_j \over \sigma^2} \sum\limits_{(b_i,d_i] \in \mathcal{B}} g_{b_i, d_i} (x_v,y_v) \label{eq:derivative I1} \\
+&(d_j - b_j) \cdot {1 \over \sigma^2} \sum\limits_{(b_i,d_i] \in \mathcal{B}} (x_v-y_v) g_{b_i, d_i} (x_v,y_v), \nonumber \\
{\partial I_{v} \over \partial d_j} =& \sum\limits_{(b_i,d_i] \in \mathcal{B}} g_{b_i, d_i} (x_v,y_v) \nonumber\\
-&(d_j - b_j) \cdot {d_j - b_j \over \sigma^2}  \sum\limits_{(b_i,d_i] \in \mathcal{B}} g_{b_i, d_i} (x_v,y_v) \label{eq:derivative I2} \\
+&(d_j - b_j) \cdot {1 \over \sigma^2} \sum\limits_{(b_i,d_i] \in \mathcal{B}} y_v g_{b_i, d_i} (x_v,y_v), \nonumber
\end{align} 
and Clarke subgradients
\begin{align}
{\partial b_j \over \partial \rho_L} \in& \ {\partial \over \partial \rho_L}	\left( \max \left\{ b_{L, n_L}(\boldsymbol{\rho}): \sum\limits_{L=1}^{N} n_L = n \right\}\right), \label{eq:derivative b}\\
{\partial d_j \over \partial \rho_L} \in& \ {\partial \over \partial \rho_L} 	\left( \min \left\{ d_{L, n_L}(\boldsymbol{\rho}):  \sum\limits_{L=1}^{N} n_L = n \right\}\right), \label{eq:derivative d}
\end{align}    
where $(b_j, d_j] = J_{1}^{n_1, \ell}\bigcap \cdots \bigcap J_{N}^{n_N, \ell} \in \mathsf{bcd}_{n}^{\mathcal{R},\ell}(\Psi_{f})$ in \eqref{eq:EMPH}, and $b_{L, n_L}$ and $d_{L, n_L}$ are defined in \eqref{eq:b} and \eqref{eq:d}, respectively. In particular, if we consider a one-dimensional barcode $(n=1)$, $j$ can be indexed such that $(b_j, d_j] = \Bigl(0, \sqrt{{3 \over N}}{ r_j^f   \over \rho_j } \Bigr]$ for $1 \le j \le N$. Then, we get
\begin{align}
{\partial b_j \over \partial \rho_L} =& \ 0 \quad \text{ 
for } 1\le j,L \le N, \label{eq:derivative b,1}\\
{\partial d_j \over \partial \rho_L} =& \begin{cases}
-\sqrt{{3 \over N}}{r_L^f \over \rho_L^2}, & \mbox{if } j =L \\
0, & \mbox{otherwise}
\end{cases}. \label{eq:derivative d,1}   
\end{align}

\end{theorem}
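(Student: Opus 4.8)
The plan is to compute the gradient by the chain rule, working backwards through the composition $\mathcal{L} = l(y,\cdot)\circ\phi\circ NN\circ\Phi\circ\I\circ Q_{p,q}\circ\tilde{B}$ that underlies the model, and to assemble the pieces stated in Table \ref{table:definition} and Definition \ref{def:smooth}. The first two formulas, $\partial\mathcal{L}/\partial W^{[l]}$ and $\partial\mathcal{L}/\partial b^{[l]}$, are just the standard backpropagation recursion for a fully connected ReLU network: introduce $\delta^{[l]}=\partial\mathcal{L}/\partial z_l$, note $z_l = W^{[l]}g^{[l-1]}(z_{l-1})+b^{[l]}$, and differentiate; the ReLU nonsmoothness is handled by reading every derivative as a Clarke subgradient, which is legitimate by Lemma \ref{lemma:V} (definable + locally Lipschitz) together with the calculus of Clarke subdifferentials. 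I would state this recursion and cite it as routine.

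For $\partial\mathcal{L}/\partial\a$, I would peel off the layers one at a time. First, $\partial\mathcal{L}/\partial z_0 = \delta^{[1]}W^{[1]}$ since $z_1 = W^{[1]}z_0 + b^{[1]}$ and $z_0=\I$ (after Min-Max scaling). Then the Min-Max scaling $\Phi$ contributes the scalar factor $1/(\I_{\max}-\I_{\min})$ — here I would remark that we treat $\I_{\max},\I_{\min}$ as (locally) constant, i.e. we take the subgradient branch that ignores their dependence on $\a$, which is the standard convention for Min-Max layers and is consistent with $\Phi$ being definable. Next, the persistence image depends on $\a$ only through the birth/death coordinates $\mathsf{b}=(b_1,\dots,b_p)$ and $\mathsf{d}=(d_1,\dots,d_p)$ of the barcode, and those depend on $\a$ only through $\boldsymbol{\rho}(\a)=\a/\lVert\a\rVert$ (inspect the formula in Theorem \ref{thm:EMPH}: every endpoint is a ratio whose denominator is $\sqrt{N}a_L/\lVert\a\rVert=\sqrt{N}\rho_L$). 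So the chain rule gives the middle factor $\big(\tfrac{\partial\I}{\partial\mathsf b}\tfrac{\partial\mathsf b}{\partial\boldsymbol\rho}+\tfrac{\partial\I}{\partial\mathsf d}\tfrac{\partial\mathsf d}{\partial\boldsymbol\rho}\big)$, and finally $\partial\boldsymbol\rho/\partial\a$ is exactly Lemma \ref{lemma:derivative_rho}, yielding the last factor $\tfrac{1}{\lVert\a\rVert}\mathbf 1-\tfrac{1}{\lVert\a\rVert^3}\a\a^T$. Stringing these together gives \eqref{eq:gradients}.

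It remains to verify the explicit component formulas. For \eqref{eq:derivative I1} and \eqref{eq:derivative I2} I would differentiate $I_v = I_{\mathcal B}(x_v,y_v)=\sum_{(b_i,d_i]\in\mathcal B}\omega(d_i-b_i)g_{b_i,d_i}(x_v,y_v)$ with $\omega$ the identity weight and $g_{b_i,d_i}(x,y)=\tfrac{1}{2\pi\sigma^2}e^{-[(x-b_i)^2+(y-(d_i-b_i))^2]/2\sigma^2}$ directly with respect to a single $b_j$ (resp. $d_j$): the derivative of the weight $\omega(d_j-b_j)=d_j-b_j$ produces the first line, and the derivative of the Gaussian exponent produces the remaining two lines after collecting the $(x_v-b_j)$ and $(y_v-(d_j-b_j))$ terms — the stated forms follow by grouping, using that only the $i=j$ term in the sum has nonzero derivative. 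For \eqref{eq:derivative b}–\eqref{eq:derivative d}, I would invoke Theorem \ref{thm:EMPH}: each bar of $\mathsf{bcd}^{\mathcal R,\ell}_n(\Psi_f)$ is an intersection $J_1^{n_1,\ell}\cap\cdots\cap J_N^{n_N,\ell}$, so its birth is the maximum of the $b_{L,n_L}$ and its death the minimum of the $d_{L,n_L}$ (as already noted in the proof of Lemma \ref{lemma:B}); since $\max$ and $\min$ are locally Lipschitz and definable, their Clarke subdifferentials are the convex hulls of the active-branch gradients, giving \eqref{eq:derivative b}–\eqref{eq:derivative d}. Finally, for the one-dimensional case $n=1$ only a single Fourier mode is odd-indexed, so $(b_j,d_j]=(0,\sqrt{3/N}\,r_j^f/\rho_j]$; then $b_j\equiv 0$ gives \eqref{eq:derivative b,1}, and differentiating $\sqrt{3/N}\,r_j^f/\rho_j$ in $\rho_L$ gives \eqref{eq:derivative d,1}. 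The main obstacle is purely bookkeeping: correctly tracking which subgradient branch of $\Phi$, of the ReLU layers, and of the $\max/\min$ in \eqref{eq:derivative b}–\eqref{eq:derivative d} is being selected, and making sure the matrix shapes line up across the four factors of $\partial\mathcal L/\partial\a$; there is no conceptual difficulty once the composition is written out.
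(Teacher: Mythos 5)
Your proposal is correct and follows essentially the same route as the paper's proof: standard backpropagation for the network parameters, the chain rule factorization $\frac{\partial\mathcal L}{\partial\a}=\frac{\partial\mathcal L}{\partial\Phi}\frac{\partial\Phi}{\partial\I}\left(\frac{\partial\I}{\partial\mathsf b}\frac{\partial\mathsf b}{\partial\boldsymbol\rho}+\frac{\partial\I}{\partial\mathsf d}\frac{\partial\mathsf d}{\partial\boldsymbol\rho}\right)\frac{\partial\boldsymbol\rho}{\partial\a}$ with Lemma \ref{lemma:derivative_rho} supplying the last factor, direct differentiation of the persistence image, the max/min representation of bar endpoints for the Clarke subgradients, and the explicit $n=1$ computation. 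Your remark that only the $i=j$ term of the persistence-image sum has a nonzero derivative is the mathematically correct reading (the full sums displayed in \eqref{eq:derivative I1}--\eqref{eq:derivative I2} appear to be a notational slip in the statement), and your extra care about the subgradient branch of the Min-Max scaling is a harmless refinement of the paper's argument.
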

\begin{proof}
The derivatives ${\partial \mathcal{L} \over \partial W^{[l]}}$ and ${\partial \mathcal{L} \over \partial b^{[l]}}$ are well-known gradients in deep neural networks \cite{clark2017computing},
\begin{align*}
{\partial \mathcal{L} \over \partial \a} &= {\partial \mathcal{L} \over \partial \Phi} {\partial \Phi \over \partial \I} 	\left({\partial \I \over \partial \mathsf{b}}{\partial \mathsf{b} \over \partial \boldsymbol{\rho}} + {\partial \I \over \partial \mathsf{d}}{\partial \mathsf{d} \over \partial \boldsymbol{\rho}} \right){\partial \boldsymbol{\rho} \over \partial \a} 
\\ 
&= \left(\delta^{[1]} W^{[1]}\right)  \left( {1 \over \I_{\max} - \I_{\min}} \right) 
\left({\partial \I \over \partial \mathsf{b}}{\partial \mathsf{b} \over \partial \boldsymbol{\rho}} + {\partial \I \over \partial \mathsf{d}}{\partial \mathsf{d} \over \partial \boldsymbol{\rho}} \right)
 \left( {1 \over \lVert \a \rVert} \mathbf{1} - {1 \over \lVert \a \rVert^3} \a^T \a \right)   
\end{align*}

Equations \eqref{eq:derivative I1} and \eqref{eq:derivative I2} can be directly computed from the definition of $\I$ in \eqref{eq:persistence image}. Equations \eqref{eq:derivative b} and \eqref{eq:derivative d} are induced from $(b_j, d_j] = J_{1}^{n_1, \ell}\bigcap \cdots \bigcap J_{N}^{n_N, \ell} =  \biggl( \max \Bigl\{ b_{L, n_L}(\boldsymbol{\rho}): \sum\limits_{L=1}^N n_L= n \Bigr\}, \min \Bigl\{ d_{L, n_L}(\boldsymbol{\rho}): \sum\limits_{L=1}^N n_L= n \Bigr\} \biggr]$. For \eqref{eq:derivative b,1} and \eqref{eq:derivative d,1}, set $b_j = \max \left\{ b_{j, n_j}(\boldsymbol{\rho}^1, \cdots, \boldsymbol{\rho}^R): n_j =1 \text{ and } n_1, \cdots, n_{j-1}, n_{j+1}, \cdots, n_N = 0 \right\} = 0$ and $d_j = \min \bigl\{ d_{j, n_j}(\boldsymbol{\rho}^1, \cdots, \boldsymbol{\rho}^R): n_j =1 \text{ and } n_1, \cdots, n_{j-1}, n_{j+1}, \cdots, n_N = 0 \bigr\} = \sqrt{{3 \over N}}{ r_j^f   \over \rho_j }$. Differentiating these with respect to $\rho_L$ yields \eqref{eq:derivative b,1} and \eqref{eq:derivative d,1}.
\end{proof}

Using these formulas, we can directly update $\a$ in \eqref{eq:update rule} and calculate efficiently the backward process. In Example \ref{Ex:time}, we will show the advantages of the exact computation by comparing the computational costs.


\subsection{Filtration curve learning}
\label{subsec:curved}
The optimal filtration ray within a multi-parameter space can be generalized to curved filtration that offers a more accurate reflection of the characteristics of the data.
\begin{definition}[Filtration curve and persistent homology] 
For an increasing curve filtration $\c = (c_1,\cdots , c_N) : [0,Q] \rightarrow \R^N$ (with $\c' > 0$), we define a one-parameter filtration $\mathcal{R}^{\c}\left(\Psi_{f}\right) : \mathbb{R} \rightarrow \mathbf{Simp}$ by setting $\mathcal{R}^{\c}_{t} \left(\Psi_{f}\right) = \mathcal{R}_{c_1 (t)}\left(r_1^f \cdot \mathbb{S}^1\right) \times \cdots \times \mathcal{R}_{c_N (t)}\left(r_N^f \cdot \mathbb{S}^1\right)$. We denote the barcode induced from this filtration as $\mathsf{bcd}_{*}^{\mathcal{R},\c}(\Psi_{f})$. 
\end{definition}
As in the proof of Theorem \ref{thm:EMPH}, we can easily deduce the exact formula for the barcode $\mathsf{bcd}_{*}^{\mathcal{R},\c}(\Psi_{f})$.   

\begin{lemma}
Let an increasing curve $\c : [0,Q] \rightarrow \R^N$ with $\c(0) = \mathbf{0}$ have a constant speed of $\sqrt{N}$. Then the barcode is given by the following:  
\begin{equation}
\mathsf{bcd}_{n}^{\mathcal{R},\c}(\Psi_{f}) = \left\{ J_{1}^{n_1, \c}\bigcap \cdots \bigcap J_{N}^{n_N, \c} :  J_{L}^{n_L, \c} \in  \mathsf{bcd}_{n_L}^{\mathcal{R},\c} 	\left( r_L^f \cdot \mathbb{S}^1 \right)  \ \text{and} \ \sum\limits_{L=1}^{N} n_L = n \right\},
\label{eq:curve barcode}
\end{equation}

where 
\begin{equation*}
J_{L}^{n_L, \c} = \begin{cases} \left(0,\infty\right), & \mbox{if }n_L=0 \\ \left(c_L^{-1}\left(2r_L^f \sin\left(\pi {k \over 2k+1}\right) \right)  , c_L^{-1}\left(2r_L^f \sin\left(\pi {k+1 \over 2k+3}\right)  \right)\right], & \mbox{if }n_L=2k+1 \\ \ \emptyset, & \mbox{otherwise}
\end{cases}.
\end{equation*}
\end{lemma}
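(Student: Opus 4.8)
The plan is to reduce the curved-filtration statement to Theorem \ref{thm:EMPH} by a reparametrization argument, and then identify the barcode of each circle factor under a monotone reparametrization. First I would note that, since $\c : [0,Q] \to \R^N$ is a smooth increasing curve with $\c(0) = \mathbf{0}$ and constant speed $\sqrt N$, each coordinate function $c_L : [0,Q] \to [0,\infty)$ is a strictly increasing bijection onto its image, hence invertible with a well-defined inverse $c_L^{-1}$ on that image; this is exactly what makes the right-hand side of the claim well posed. The key structural observation — the same one used in the proof of Theorem \ref{thm:EMPH} — is that the curved filtration $\mathcal{R}^{\c}(\Psi_f)$ is a pointwise product of the one-parameter filtrations $t \mapsto \mathcal{R}_{c_L(t)}(r_L^f \cdot \mathbb{S}^1)$ of the individual circles, so by the Künneth formula (or the persistence-module version used to prove \eqref{eq:EMPH}) the barcode of the product is the set of pairwise intersections $J_1^{n_1,\c} \cap \cdots \cap J_N^{n_N,\c}$ of bars coming from the factors, with the homological degrees $n_L$ summing to $n$. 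That gives the outer structure \eqref{eq:curve barcode} immediately from the linear-ray case, and reduces everything to computing $\mathsf{bcd}_{n_L}^{\mathcal{R},\c}(r_L^f \cdot \mathbb{S}^1)$.

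For the single-circle computation, I would use the fact that reparametrizing a one-parameter filtration by a strictly increasing continuous function transforms its barcode by applying that function's inverse to every birth and death value. Concretely, the filtration $\{\mathcal{R}_{\epsilon}(r_L^f \cdot \mathbb{S}^1)\}_{\epsilon}$ has a known barcode: its $0$-th bar is $(0,\infty)$, and its $(2k+1)$-th bar is $\big(2r_L^f \sin(\pi \tfrac{k}{2k+1}),\, 2r_L^f\sin(\pi\tfrac{k+1}{2k+3})\big]$, with empty barcode in even positive degrees — this is precisely the content recorded inside Theorem \ref{thm:EMPH} for the Vietoris–Rips filtration of a scaled circle (going back to Adamaszek–Adams). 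Precomposing the filtration parameter with $c_L$ means the simplicial complex that used to appear at scale $\epsilon$ now appears at scale $c_L^{-1}(\epsilon)$; since $c_L^{-1}$ is an order isomorphism, the interval decomposition of the persistence module is carried over verbatim with each endpoint $\epsilon$ replaced by $c_L^{-1}(\epsilon)$, and the degree-$0$ bar $(0,\infty)$ is preserved because $c_L(0)=0$ and $c_L$ is unbounded-enough on $[0,Q]$ to cover the relevant range (or one simply reads off $(0,\infty)$ directly, as the circle is connected throughout). This yields exactly the stated formula for $J_L^{n_L,\c}$.

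The only genuinely delicate point — and the one I would expect to be the main obstacle — is domain/range bookkeeping for $c_L^{-1}$: one must check that the birth and death scales appearing in the circle's Rips barcode, namely the values $2r_L^f\sin(\pi\tfrac{k}{2k+1})$, actually lie in the image of $c_L$, so that $c_L^{-1}$ of them makes sense. Since these scales are bounded (they are at most $2r_L^f$, the diameter of the circle $r_L^f\cdot\mathbb{S}^1$), this amounts to requiring that the curve $\c$ travel far enough, i.e.\ that $Q$ be large enough for each $c_L$ to reach $2r_L^f$; equivalently one restricts attention to the portion of the barcode with death value inside $c_L([0,Q])$ and truncates the rest, which is the usual convention for finite-length filtration curves. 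With that caveat spelled out, the constant-speed-$\sqrt N$ hypothesis plays no essential role beyond normalizing $\c$ to agree with the $\ell(t)=\sqrt N\, t\cdot \mathbf{a}/\lVert\mathbf{a}\rVert$ convention of Definition \ref{Def:multi-parameter} when $\c$ is linear, so that \eqref{eq:curve barcode} genuinely specializes to \eqref{eq:EMPH}; I would state this compatibility as a closing remark rather than belabor it.
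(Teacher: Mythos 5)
Your proposal is correct and takes essentially the same route as the paper: the paper likewise obtains each single-circle bar by identifying the birth/death time as the minimal $t$ with $c_L(t)$ at least the corresponding Rips threshold, i.e.\ $c_L^{-1}$ of that threshold, and then assembles the product barcode via the persistent K\"unneth formula for the max-metric product of circles. Your closing caveat about whether the thresholds $2r_L^f\sin(\pi\tfrac{k}{2k+1})$ actually lie in the image of $c_L$ on $[0,Q]$ is a legitimate bookkeeping point that the paper's proof silently elides, but it does not alter the argument.
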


\begin{proof}
Note that
$$\mathsf{bcd}_{n}^{\mathcal{R},\c}(r_L^f \cdot \mathbb{S}^1)=
\begin{cases}
\left\{ \left(0,\infty\right) \right\}, & \mbox{if }n=0 \\
\left\{ J_{L}^{2k+1, \c} \right\}, & \mbox{if }n=2k+1 \\
\emptyset, &\mbox{ otherwise}
\end{cases},$$
where 
$$J_{L}^{2k+1, \c} := \left( \min\limits_{t} c_L(t) \ge 2r_L^f \sin\left(\pi{k \over 2k +1}\right) , \min\limits_{t}c_L(t) \ge 2r_L^f \sin\left(\pi {k+1 \over 2k+3}\right) \right] 
$$

$$= \left(c_L^{-1}\left(2r_L^f \sin\left(\pi {k \over 2k+1}\right) \right)  , c_L^{-1}\left(2r_L^f \sin\left(\pi {k+1 \over 2k+3}\right)  \right)\right].$$

By applying the persistent K\"unneth formula (\cite{gakhar2019k, kim2022exact}) to the metric space $(r_1^f \cdot \mathbb{S}^1 \times \cdots \times r_N^f \cdot \mathbb{S}^1)$, with maximum metric, we obtain \eqref{eq:curve barcode}.
\end{proof}
We can easily verify that the barcode formula for filtration ray is a special case of filtration curve.
\begin{example}
Let $c_L(t) = \sqrt{N}t \cdot {a_L \over \lVert \a \rVert}$. Then we have its inverse $c_L^{-1}(t) = {t \over \sqrt{N}a_L / \lVert \mathbf{a} \rVert}$ and $\mathsf{bcd}_{n}^{\mathcal{R},\c}(\Psi_{f}) = \mathsf{bcd}_{n}^{\mathcal{R},\ell}(\Psi_{f})$ as in Theorem \ref{thm:EMPH}.      
\end{example}

Extending the filtration ray optimization directly to the filtration curves is not straightforward, due to the fundamental difference between them. The classification with the filtration ray is parameterized by a single direction vector, but the filtration curve is generally continuously parameterized, not finitely unlike the filtration ray. This means that the filtration curve learning involves infinitely many parameter updates. We address this issus by approximating the curve as a piecewise linear function, which allows the filtration curve to be finitely parameterized.

Given a curve $\c = (c_1, \cdots, c_N) : [0,Q] \rightarrow \R^N$ with $\c(0) = \mathbf{0}$, consider a piecewise line approximation $\tilde{\c} = (\tilde{c}_1, \cdots, \tilde{c}_N) : [0,Q] \rightarrow \R^N$, where
\begin{equation}
\label{eq:piecewise}
\tilde{c}_L(t) = \sqrt{N}{Q \over R}\sum\limits_{i=1}^{s-1}{a_L^i \over \lVert \a^i \rVert} + \sqrt{N}\left(t-{(s-1)Q \over R}\right){a_L^s \over \lVert \a^s \rVert}, \quad (s-1){Q \over R} \le t \le s{Q \over R},    
\end{equation}
$\a^s = (a^s_1, \cdots, a^s_N) > 0$ and $\c(s{Q \over R}) = \tilde{\c}(s{Q \over R})$ for $s=1,\cdots,R$. Here, $\a^s$ is the direction vector of the $s$th line segment and $R$ represents the number of line segments.

The following theorem states that the barcode is parameterized by the finite number of parameters $\a^1, \cdots, \a^s$ in an explicit form.

\begin{theorem}
\label{thm:piecewise barcode}
For a piecewise linear approximation of a curve $\tilde{\c}$, we have

\begin{equation}
\mathsf{bcd}_{n}^{\mathcal{R},\tilde{\c}}(\Psi_{f}) = \left\{ J_{1}^{n_1, \tilde{\c}}\bigcap \cdots \bigcap J_{N}^{n_N, \tilde{\c}} :  \sum\limits_{L=1}^{N} n_L = n \right\},
\label{eq:piecewise barcode}
\end{equation}
where
{\tiny
\begin{equation*}
J_{L}^{n_L, \tilde{\c}} = \begin{cases} \left(0,\infty\right), & \mbox{if }n_L=0 \\ \left({2r_L^f \sin\left(\pi {k \over 2k+1}\right)  \over \sqrt{N} a_L^{s_{b,L}} / \lVert \a^{s_{b,L}} \rVert} - {Q \over R}{\sum\limits_{i=1}^{{s_{b,L}}} {a_L^i \over \lVert \a^i \rVert} \over a_L^{s_{b,L}} / \lVert \a^{s_{b,L}} \rVert} + {s_{b,L}}{Q \over R}, {2r_L^f \sin\left(\pi {k +1 \over 2k+3}\right)  \over \sqrt{N} a_L^{s_{d,L}} / \lVert \a^{s_{d,L}} \rVert} - {Q \over R}{\sum\limits_{i=1}^{{s_{d,L}}} {a_L^i \over \lVert \a^i \rVert} \over a_L^{s_{d,L}} / \lVert \a^{s_{d,L}} \rVert} + {s_{d,L}}{Q \over R} \right], & \mbox{if }n_L=2k+1 \\ \ \emptyset, & \mbox{otherwise}
\end{cases},
\end{equation*}
}
$s_{b,L}$ satisfies $\tilde{c}_L \left( (s_{b,L}-1) {Q \over R}\right) \le 2r_L^f \sin\left(\pi {k \over 2k+1}\right) \le \tilde{c}_L \left( s_{b,L} {Q \over R}\right)$ and $s_{d,L}$ satisfies $\tilde{c}_L \left( (s_{d,L}-1) {Q \over R}\right) \le 2r_L^f \sin\left(\pi {k+1 \over 2k+3}\right) \le \tilde{c}_L \left( s_{d,L} {Q \over R}\right)$.
\end{theorem}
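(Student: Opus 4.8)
The plan is to read off Theorem~\ref{thm:piecewise barcode} from the general curve formula \eqref{eq:curve barcode} by specializing the increasing curve there to the piecewise linear curve $\tilde{\c}$ of \eqref{eq:piecewise}; the only substantive step is inverting each coordinate function $\tilde{c}_L$ in closed form.

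First I would check that $\tilde{\c}$ satisfies the hypotheses behind \eqref{eq:curve barcode}. From \eqref{eq:piecewise}, on the $s$-th piece $[(s-1)Q/R,\,sQ/R]$ we have $\tilde{c}_L'(t)=\sqrt{N}\,a_L^{s}/\lVert\a^{s}\rVert>0$ (since $\a^{s}>0$), so each $\tilde{c}_L$ is continuous and strictly increasing; $\tilde{\c}(0)=\mathbf{0}$ by the $s=1$ case of \eqref{eq:piecewise}; and $\lVert\tilde{\c}'(t)\rVert=\sqrt{N\sum_L (a_L^{s})^2/\lVert\a^{s}\rVert^2}=\sqrt{N}$ on each piece, so the arc length from $0$ to $t$ is $\sqrt{N}\,t$. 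The curve is non-differentiable only at the finitely many breakpoints $sQ/R$, but the derivation of \eqref{eq:curve barcode} uses $\c'>0$ solely through the identity $\min\{t:c_L(t)\ge v\}=c_L^{-1}(v)$, which holds for any continuous strictly increasing $c_L$. Hence the same argument --- computing $\mathsf{bcd}_*^{\mathcal{R},\tilde{\c}}(r_L^f\cdot\mathbb{S}^1)$ factorwise and then applying the persistent K\"unneth formula \cite{gakhar2019k, kim2022exact} to $(r_1^f\cdot\mathbb{S}^1\times\cdots\times r_N^f\cdot\mathbb{S}^1)$ with the maximum metric --- carries over verbatim and gives \eqref{eq:piecewise barcode} with $J_L^{n_L,\tilde{\c}}=(0,\infty)$ when $n_L=0$ and $J_L^{2k+1,\tilde{\c}}=\bigl(\tilde{c}_L^{-1}(2r_L^f\sin(\pi k/(2k+1))),\,\tilde{c}_L^{-1}(2r_L^f\sin(\pi(k+1)/(2k+3)))\bigr]$.

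Next I would compute $\tilde{c}_L^{-1}$ explicitly. Writing $\alpha_i:=a_L^{i}/\lVert\a^{i}\rVert$, a value $v\ge 0$ lies in the range of the $s$-th piece exactly when $\tilde{c}_L((s-1)Q/R)\le v\le\tilde{c}_L(sQ/R)$, which is precisely the condition defining $s_{b,L}$ (resp. $s_{d,L}$) in the statement. This index is unique unless $v$ equals a breakpoint value, in which case one checks the two admissible pieces return the same $t$, so $\tilde{c}_L^{-1}$ is unambiguous. Solving $v=\sqrt{N}(Q/R)\sum_{i=1}^{s-1}\alpha_i+\sqrt{N}\bigl(t-(s-1)Q/R\bigr)\alpha_s$ for $t$ gives
\[
\tilde{c}_L^{-1}(v)=\frac{v}{\sqrt{N}\,\alpha_s}-\frac{Q}{R}\,\frac{\sum_{i=1}^{s-1}\alpha_i}{\alpha_s}+\frac{(s-1)Q}{R},
\]
and rewriting $\sum_{i=1}^{s-1}\alpha_i=\sum_{i=1}^{s}\alpha_i-\alpha_s$ together with $(s-1)Q/R=sQ/R-Q/R$ puts this into the form displayed in the theorem. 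Substituting $v=2r_L^f\sin(\pi k/(2k+1))$ with $s=s_{b,L}$ for the left endpoint and $v=2r_L^f\sin(\pi(k+1)/(2k+3))$ with $s=s_{d,L}$ for the right endpoint finishes the identification of each $J_L^{n_L,\tilde{\c}}$, and hence \eqref{eq:piecewise barcode}.

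The arithmetic is routine; the point that needs genuine care --- and thus the main obstacle --- is the first step: verifying that the curve barcode formula \eqref{eq:curve barcode} still applies to $\tilde{\c}$ in spite of its corners, i.e. that the proof behind \eqref{eq:curve barcode} really only consumes continuity and strict monotonicity of the coordinate functions, plus the small bookkeeping that makes the segment index (and hence $\tilde{c}_L^{-1}$) well defined on the finitely many breakpoint values.
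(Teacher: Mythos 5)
Your proposal is correct and follows essentially the same route as the paper: the paper's proof likewise just computes $\tilde{c}_L^{-1}$ in closed form on each segment and substitutes the birth and death values $2r_L^f\sin(\pi k/(2k+1))$ and $2r_L^f\sin(\pi(k+1)/(2k+3))$ into the general curve formula \eqref{eq:curve barcode}, with the same index rewriting $\sum_{i=1}^{s-1}\mapsto\sum_{i=1}^{s}$. Your additional check that the curve barcode lemma still applies to $\tilde{\c}$ despite its corners (continuity and strict monotonicity being all that is consumed) is a point the paper leaves implicit, but it is a refinement of the same argument rather than a different one.
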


\begin{proof}
Since $$\tilde{c}_L^{-1}(x) = {x - \sqrt{N}{Q \over R}\sum\limits_{i=1}^{s-1}{a_L^i \over \lVert \a^i \rVert} + \sqrt{N}{(s-1) \over R}Q {a_L^s \over \lVert \a^s \rVert} \over \sqrt{N}a_L^s / \lVert \a^s \rVert}, \quad \tilde{c}_L\left( (s-1){Q \over R} \right)\le x \le \tilde{c}_L\left( s{Q \over R} \right),$$ the birth time is 
\begin{align}
\tilde{c}_L^{-1}\left(2r_L^f \sin\left(\pi {k \over 2k+1}\right) \right) &= {2r_L^f \sin\left(\pi {k \over 2k+1}\right) - \sqrt{N}{Q \over R}\sum\limits_{i=1}^{s_{b,L}-1}{a_L^i \over \lVert \a^i \rVert} + \sqrt{N}{(s_{b,L}-1) \over R}Q {a_L^{s_{b,L}} \over \lVert \a^{s_{b,L}} \rVert} \over \sqrt{N}a_L^{s_{b,L}} / \lVert \a^{s_{b,L}} \rVert}  \nonumber \\ &= {2r_L^f \sin\left(\pi {k \over 2k+1}\right)  \over \sqrt{N} a_L^{s_{b,L}} / \lVert \a^{s_{b,L}} \rVert} - {Q \over R}{\sum\limits_{i=1}^{{s_{b,L}}} {a_L^i \over \lVert \a^i \rVert} \over a_L^{s_{b,L}} / \lVert \a^{s_{b,L}} \rVert} + {s_{b,L}}{Q \over R}.
\label{eq:converge}
\end{align}
The death time can be shown similarly.   
\end{proof}

\begin{corollary}
The approximated barcode $\mathsf{bcd}_{n}^{\mathcal{R},\tilde{\c}}(\Psi_{f})$ converges to $\mathsf{bcd}_{n}^{\mathcal{R},\c}(\Psi_{f})$ as $R \rightarrow \infty$.      
\end{corollary}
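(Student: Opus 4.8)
The approach is to observe that $\tilde{\c}$ is simply the piecewise-linear interpolant of $\c$ on the uniform mesh $\{\,sQ/R : s = 0,\dots,R\,\}$, so $\tilde{\c} \to \c$ uniformly as $R \to \infty$; since every birth and death appearing in the barcode formulas \eqref{eq:curve barcode} and \eqref{eq:piecewise barcode} is obtained by applying a coordinate inverse ($c_L^{-1}$, resp.\ $\tilde{c}_L^{-1}$) to one of the fixed thresholds $2r_L^f\sin(\pi k/(2k+1))$, $2r_L^f\sin(\pi(k+1)/(2k+3))$ from Theorem \ref{thm:EMPH} and then taking a $\max$ or $\min$ over $L$, it will be enough to prove $\tilde{c}_L^{-1} \to c_L^{-1}$ uniformly for each $L$ and then package this into convergence of the barcode in the bottleneck distance $d_B$.

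First I would bound the interpolation error: because $\c$ has constant speed $\sqrt{N}$ it is $\sqrt{N}$-Lipschitz, and on each subinterval $[(s-1)Q/R,\,sQ/R]$ both $c_L$ and the segment $\tilde{c}_L$ lie between the common endpoint values $c_L((s-1)Q/R)$ and $c_L(sQ/R)$, whence $\lVert \tilde{c}_L - c_L \rVert_\infty \le \sqrt{N}\,Q/R =: \delta_R \to 0$. Next, each $c_L$ is a strictly increasing continuous bijection $[0,Q] \to [0,c_L(Q)]$ (as $\c' > 0$), and so is $\tilde{c}_L$ (its piecewise slopes $\sqrt{N}\,a_L^s/\lVert \a^s \rVert$ are positive), with the same endpoints $0$ and $c_L(Q)$. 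For such maps uniform convergence transfers to the inverses: writing $t = c_L^{-1}(x)$ and $\tilde{t} = \tilde{c}_L^{-1}(x)$ for $x$ in the common range gives $|c_L(t) - c_L(\tilde{t})| = |\tilde{c}_L(\tilde{t}) - c_L(\tilde{t})| \le \delta_R$, so $|t - \tilde{t}| \le \omega_L(\delta_R) \to 0$ uniformly in $x$, where $\omega_L$ is the modulus of continuity of $c_L^{-1}$ (one may take $\omega_L(\delta) = \delta/\min_{[0,Q]} c_L'$ when $\c \in C^1$). Hence every birth $\max_L \tilde{c}_L^{-1}(\cdot)$ and death $\min_L \tilde{c}_L^{-1}(\cdot)$ in \eqref{eq:piecewise barcode} converges, at rate $\max_L \omega_L(\delta_R)$, to the corresponding endpoint in \eqref{eq:curve barcode}.

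To finish I would match the bars of $\mathsf{bcd}_{n}^{\mathcal{R},\tilde{\c}}(\Psi_{f})$ and $\mathsf{bcd}_{n}^{\mathcal{R},\c}(\Psi_{f})$ by their shared K\"unneth index $(n_1,\dots,n_N)$ with $\sum_L n_L = n$: for an index whose intersection is nondegenerate in the limit, the matched pair differs in both endpoints by at most $\max_L \omega_L(\delta_R)$ and so contributes $o(1)$ to $d_B$. The step I expect to be the main obstacle is exactly this bookkeeping. The two barcodes need not be finite, and near the ``boundary'' an index may give a genuine bar for one curve while giving the empty set (or a single point) for the other, because nonemptiness of $\bigcap_L J_L^{n_L}$ depends on the inequality $\max_L \text{(birth)} < \min_L \text{(death)}$, which can flip under a small perturbation of the $c_L^{-1}$-values; such degenerate bars must be paired with the diagonal, and one has to verify that their total bottleneck cost still tends to $0$ (it does, since each such bar collapses to a point in the limit and hence has width $o(1)$, and the high-index tail of bars of any one circle has widths summable in $k$). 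Modulo this matching argument, the conclusion $d_B\big(\mathsf{bcd}_{n}^{\mathcal{R},\tilde{\c}}(\Psi_{f}),\,\mathsf{bcd}_{n}^{\mathcal{R},\c}(\Psi_{f})\big) \to 0$ follows from Steps 1--2.
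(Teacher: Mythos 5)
Your proposal is correct and follows essentially the same route as the paper's proof: both reduce the claim to the convergence $\tilde{c}_L^{-1}(x) \to c_L^{-1}(x)$ at the finitely many thresholds $2r_L^f\sin(\pi k/(2k+1))$, $2r_L^f\sin(\pi(k+1)/(2k+3))$ from Theorem \ref{thm:EMPH}, and then pass from convergence of the intervals $J_L^{n_L,\tilde{\c}}$ to convergence of the barcode. The paper simply asserts the inverse convergence from the interpolation identity $\c(sQ/R)=\tilde{\c}(sQ/R)$ and leaves the final step (the metric on barcodes and the matching of possibly degenerate bars) implicit, so your quantitative bound $\lVert\tilde{c}_L-c_L\rVert_\infty\le\sqrt{N}Q/R$, the modulus-of-continuity transfer to the inverses, and the bottleneck-matching discussion supply details the published argument omits rather than a different approach.
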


\begin{proof}
In \eqref{eq:converge},
\begin{align*}
{2r_L^f \sin\left(\pi {k \over 2k+1}\right)  \over \sqrt{N} a_L^{s_{b,L}} / \lVert \a^{s_{b,L}} \rVert} - {Q \over R}{\sum\limits_{i=1}^{{s_{b,L}}} {a_L^i \over \lVert \a^i \rVert} \over a_L^{s_{b,L}} / \lVert \a^{s_{b,L}} \rVert} + {s_{b,L}}{Q \over R} &  \\
= \tilde{c}_L^{-1}\left(2r_L^f \sin\left(\pi {k \over 2k+1}\right) \right) & \longrightarrow \ c_L^{-1}\left(2r_L^f \sin\left(\pi {k \over 2k+1}\right) \right) 
\end{align*}
as $R \rightarrow \infty$, and similarly,
\begin{align*}
{2r_L^f \sin\left(\pi {k+1 \over 2k+3}\right)  \over \sqrt{N} a_L^{s_{d,L}} / \lVert \a^{s_{d,L}} \rVert} - {Q \over R}{\sum\limits_{i=1}^{{s_{d,L}}} {a_L^i \over \lVert \a^i \rVert} \over a_L^{s_{d,L}} / \lVert \a^{s_{d,L}} \rVert} + {s_{d,L}}{Q \over R} &  \\
= \tilde{c}_L^{-1}\left(2r_L^f \sin\left(\pi {k+1 \over 2k+3}\right) \right) & \longrightarrow \ c_L^{-1}\left(2r_L^f \sin\left(\pi {k+1 \over 2k+3}\right) \right) 
\end{align*}
as $R \rightarrow \infty$. Since $J_{L}^{n_L, \tilde{\c}}$ in \eqref{eq:piecewise barcode} converges to $J_{L}^{n_L, \c}$ in \eqref{eq:curve barcode}, $\mathsf{bcd}_{n}^{\mathcal{R},\tilde{\c}}(\Psi_{f})$ converges to $\mathsf{bcd}_{n}^{\mathcal{R},\c}(\Psi_{f})$ as $R \rightarrow \infty$.
\end{proof}
Notations are shared in Table \ref{table:definition} except for $B: \left(\mathbb{R}_{m,M}^N\right)^R \rightarrow \mathsf{Bar}, (\mathbf{a}^1, \cdots, \mathbf{a}^R)  \mapsto \mathsf{bcd}_{n}^{\mathcal{R},\tilde{\mathbf{c}}}(\Psi_{f})$. As in Lemma \ref{lemma:smooth} and Remark \ref{rmk:endpoint}, $B$ and $V$ are continuous. Therefore $\mathcal{L}(\mathbf{W},\mathbf{b}, \a^1, \cdots, \a^R) := V \circ \mathsf{bcd}_{n}^{\mathcal{R},\tilde{\mathbf{c}}}(\Psi_{f}) \rightarrow V \circ \mathsf{bcd}_{n}^{\mathcal{R},\c}(\Psi_{f}) = \mathcal{L}(\mathbf{W},\mathbf{b}, \c)$ as $R \rightarrow \infty$, and the filtration curve learning problem can be approximated as the following constrained optimization problem:
\begin{equation}
\label{eq:objective2}
\argmin_{\substack{\mathbf{W},\mathbf{b}, \\ \a^1, \cdots, \a^R \in \mathbb{R}_{m,M}^N}} \mathcal{L}(\mathbf{W},\mathbf{b}, \a^1, \cdots, \a^R).
\end{equation}

Figure \ref{fig:filtration curve learning} schematically illustrates the filtration curve learning of EMPH. The only difference from the filtration ray learning is the number of learning parameters of direction vectors $\a^s$. Therefore, similarly to Lemma \ref{lemma:B}, it can be shown that $\tilde{B}$ is locally Lipschitz and definable. Additionally, the gradient of the loss function with respect to the direction vectors $\a^s$ can be derived. Consequently, Corollary \ref{cor:filtration curve} can be obtained.

\begin{figure}[h]
    \centering
\includegraphics[width=1 \linewidth]{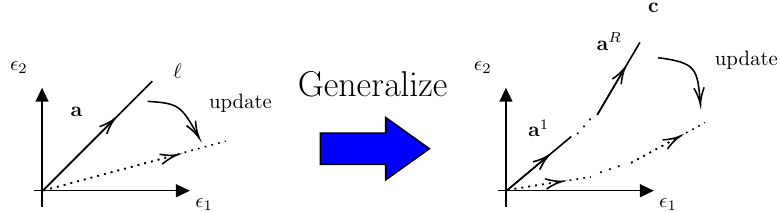}
\caption{Schematic illustration of filtration curve learning in EMPH. Filtration curve learning involves changing the left-hand side figure to the right-hand side figure in Figure \ref{fig:Pipeline_of_optimization2}.}
\label{fig:filtration curve learning}
\end{figure}

\begin{corollary}
\label{cor:filtration curve}
For a piecewise linear approximation of a curve $\c$, as defined in \eqref{eq:piecewise}, the sequences of $\mathbf{W}_k, \mathbf{b}_k$ and $\a^s_k$ in \eqref{eq:objective2} converge to the Clarke stationary points of $\mathcal{L}$ with the following update rule,

\begin{equation*}
\begin{aligned}
W^{[l]}_{k+1} &= W^{[l]}_{k} - \alpha_k {\partial \mathcal{L} \over \partial {W^{[l]}_{k}}}, \\
b^{[l]}_{k+1} &= b^{[l]}_{k} - \alpha_k 
{\partial \mathcal{L} \over \partial {b^{[l]}_{k}}}, \\
\a_{k+{1 \over 2}}^s &= \a_k^s - \alpha_k
{\partial \mathcal{L} \over \partial {\a_{k}^s}}, \\
\a_{k+1}^s &= \mathsf{proj}_{\R^N_{m,M}} \left(\boldsymbol{\rho}\left(\a_{k+{1 \over 2}}^s\right)\right),
\end{aligned}
\label{eq:update rule2}
\end{equation*}
where 

\begin{equation*}
\begin{aligned}
{\partial \mathcal{L} \over \partial W^{[l]}} &= \left(\delta^{[l]}\right)^T \left(g^{[l-1]}(z_{l-1})\right)^T, \\
{\partial \mathcal{L} \over \partial b^{[l]}} &= \left(\delta^{[l]}\right)^T,\\
{\partial \mathcal{L} \over \partial \a^s} &= \left(\delta^{[1]} W^{[1]}\right)  \left( {1 \over \I_{\max} - \I_{\min}} \right) 
\left({\partial \I \over \partial \mathsf{b}}{\partial \mathsf{b} \over \partial \boldsymbol{\rho}^s} + {\partial \I \over \partial \mathsf{d}}{\partial \mathsf{d} \over \partial \boldsymbol{\rho}^s} \right)
 \left( {1 \over \lVert \mathbf{a}^s \rVert} \mathbf{1} - {1 \over \lVert \mathbf{a}^s \rVert^3}  \left(\mathbf{a}^s\right)^T \mathbf{a}^s \right).   
\end{aligned}
\label{eq:gradients2}
\end{equation*}
For the notations above, see Theorem \ref{thm:gradient} with $\boldsymbol{\rho}^s = {\a^s \over \lVert \a^s \rVert} = (\rho_1^s, \cdots, \rho_N^s)$.

Each component of ${\partial \mathcal{L} \over \partial \a^s}$ is as follows: the derivatives ${\partial I_{v} \over \partial b_j}$ and ${\partial I_{v} \over \partial d_j}$ are the same as in \eqref{eq:derivative I1} and \eqref{eq:derivative I2}, respectively.

Clarke subgradients are
\begin{align*}
{\partial b_j \over \partial \rho_L^s} \in& \ {\partial \over \partial \rho_L^s} \left( \max \left\{ b_{L, n_L}(\boldsymbol{\rho}^1, \cdots, \boldsymbol{\rho}^R): \sum\limits_L n_L =n \right\}\right), \\
{\partial d_j \over \partial \rho_L^s} \in& \ {\partial \over \partial \rho_L^s} 	\left( \min \left\{ d_{L, n_L}(\boldsymbol{\rho}^1, \cdots, \boldsymbol{\rho}^R): \sum\limits_L n_L =n \right\}\right),
\end{align*}    
where $(b_j, d_j] = J_{1}^{n_1, \tilde{\c}}\bigcap \cdots \bigcap J_{N}^{n_N, \tilde{\c}} \in \mathsf{bcd}_{n}^{\mathcal{R},\tilde{\c}}(\Psi_{f})$ in \eqref{eq:piecewise barcode} and
\begin{align*}
b_{L,n_L}(\a^1, \cdots, \a^R) = \begin{cases}
{2r_L^f \sin\left(\pi {k \over 2k+1}\right)  \over \sqrt{N} a_L^{s_{b,L}} / \lVert \a^{s_{b,L}} \rVert} - {Q \over R}{\sum\limits_{i=1}^{{s_{b,L}}} {a_L^i \over \lVert \a^i \rVert} \over a_L^{s_{b,L}} / \lVert \a^{s_{b,L}} \rVert} + {s_{b,L}}{Q \over R}, & \mbox{if }n_L=2k+1 \\
0, & \mbox{otherwise }
\end{cases}, 
\\
d_{L,n_L}(\a^1, \cdots, \a^R) = \begin{cases}
{2r_L^f \sin\left(\pi {k +1 \over 2k+3}\right)  \over \sqrt{N} a_L^{s_{d,L}} / \lVert \a^{s_{d,L}} \rVert} - {Q \over R}{\sum\limits_{i=1}^{{s_{d,L}}} {a_L^i \over \lVert \a^i \rVert} \over a_L^{s_{d,L}} / \lVert \a^{s_{d,L}} \rVert} + {s_{d,L}}{Q \over R}, & \mbox{if }n_L=2k+1 \\
\infty, & \mbox{otherwise }
\end{cases}.
\end{align*}
Here, $s_{b,L}$ satisfies $\tilde{c}_L \left( (s_{b,L}-1) {Q \over R}\right) \le 2r_L^f \sin\left(\pi {k \over 2k+1}\right) \le \tilde{c}_L \left( s_{b,L} {Q \over R}\right)$ and $s_{d,L}$ satisfies $\tilde{c}_L \left( (s_{d,L}-1) {Q \over R}\right) \le 2r_L^f \sin\left(\pi {k+1 \over 2k+3}\right) \le \tilde{c}_L \left( s_{d,L} {Q \over R}\right)$. In particular, if we consider a one-dimensional barcode $(n=1)$, $j$ can be indexed such that $(b_j, d_j] = \biggl(0, \sqrt{{3 \over N}}r_j^f {1 \over \rho_j^{s_{d,j}}} - {Q \over R} {\sum\limits_{i=1}^{s_{d,j}} \rho_j^i \over \rho_j^{s_{d,j}}} + s_{d,j} {Q \over R} \biggr]$. Then, we get
\begin{align}
{\partial b_j \over \partial \rho_L^s} =& \ 0 \quad \text{ for } 1\le j,L \le N \text{ and } 1\le s \le R, \label{eq:birth_derivative}\\
{\partial d_j \over \partial \rho_L^s} =& \begin{cases}
-{Q \over R}{1 \over \rho_L^{s_{d,L}}}, & \mbox{if } j =L \text{ and } s<s_{d,L},\\
-\sqrt{{3 \over N}}r_L^f {1 \over 	\left(\rho_L^{s_{d,L}}\right)^2} + {Q \over R} {\sum\limits_{i=1}^{s_{d,L}-1} \rho_L^i \over 	\left(\rho_L^{s_{d,L}}\right)^2}, & \mbox{if } j=L \text{ and } s=s_{d,L}, \\
0, & \mbox{otherwise}
\end{cases}.
\label{eq:death_derivative}
\end{align}
\end{corollary}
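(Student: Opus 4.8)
The plan is to assemble Corollary~\ref{cor:filtration curve} from three ingredients, all of which are formally parallel to what was already established for the filtration ray case. First, I would verify convergence of the update rule by checking the hypotheses of Lemma~\ref{lemma:stochastic} for the loss $\mathcal{L}(\mathbf{W},\mathbf{b},\a^1,\dots,\a^R)$. Writing $\mathcal{L} = \tilde{V}\circ\tilde{B}$ where now $\tilde{B}:(\R^N_{m,M})^R\to\R^{2p}\times\R^q$ sends $(\a^1,\dots,\a^R)$ to the birth/death coordinates coming from Theorem~\ref{thm:piecewise barcode}, I would note that $\tilde{V}$ is exactly the same map as in Lemma~\ref{lemma:V}, hence locally Lipschitz and definable. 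For $\tilde{B}$, the entries $b_{L,n_L}$ and $d_{L,n_L}$ displayed in the corollary are finite sums, products, quotients and square roots of the coordinates $a_L^i$, restricted to the compact region where no denominator vanishes; together with the case split governed by the (definable, piecewise constant) indices $s_{b,L}, s_{d,L}$, this makes each $b_{L,n_L}, d_{L,n_L}$ semialgebraic and locally Lipschitz on the compact constraint set. As in Lemma~\ref{lemma:B}, taking maxima over the $b_{L,n_L}$ and minima over the $d_{L,n_L}$ (indexed by the finitely many ways to write $\sum n_L = n$) preserves both properties by Propositions~\ref{prop:locally Lipschitz2}, \ref{prop:min semi} and~\ref{prop:semi-definable}. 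So $\tilde{B}$ is locally Lipschitz and definable, $\mathcal{L}$ inherits these, the constraint region $(\R^N_{m,M})^R$ is closed and definable, and Lemma~\ref{lemma:stochastic} applies — giving the claimed convergence to Clarke stationary points.

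Second, for the gradient formulas I would just apply the chain rule through the pipeline $\a^s \mapsto \boldsymbol{\rho}^s \mapsto (\mathsf{b},\mathsf{d}) \mapsto \I \mapsto \Phi \mapsto NN \mapsto \phi \mapsto l$, exactly as in the proof of Theorem~\ref{thm:gradient}; the $W^{[l]}$ and $b^{[l]}$ derivatives are the standard backpropagation formulas, and $\partial\mathcal{L}/\partial\a^s$ differs from $\partial\mathcal{L}/\partial\a$ only in that $\boldsymbol{\rho}$ is replaced by $\boldsymbol{\rho}^s$ and Lemma~\ref{lemma:derivative_rho} is used with $\a^s$. The derivatives $\partial I_v/\partial b_j$ and $\partial I_v/\partial d_j$ are unchanged since they depend only on the definition of $\I$ in~\eqref{eq:persistence image}. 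The Clarke-subgradient expressions for $\partial b_j/\partial\rho_L^s$ and $\partial d_j/\partial\rho_L^s$ follow from the identity $(b_j,d_j] = \bigl(\max\{b_{L,n_L}(\boldsymbol{\rho}^1,\dots,\boldsymbol{\rho}^R)\}, \min\{d_{L,n_L}(\boldsymbol{\rho}^1,\dots,\boldsymbol{\rho}^R)\}\bigr]$ read off from~\eqref{eq:piecewise barcode}, together with the fact that the Clarke subgradient of a max (resp.\ min) of smooth functions is the convex hull of the gradients of the active ones.

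Third, for the explicit one-dimensional case $n=1$: here the only nontrivial bar is $(b_j,d_j]$ with $n_j=1$ and all other $n_L=0$, so $b_j = \max\{0\} = 0$, giving~\eqref{eq:birth_derivative} immediately, and $d_j = \tilde{c}_j^{-1}\bigl(2r_j^f\sin(\pi/3)\bigr) = \sqrt{3/N}\,r_j^f/\rho_j^{s_{d,j}} - (Q/R)\sum_{i=1}^{s_{d,j}}\rho_j^i/\rho_j^{s_{d,j}} + s_{d,j}Q/R$ from~\eqref{eq:converge} with $k=1$, $2k+3 = 5$ replaced by the $n_j=1$ formula $2r_j^f\sin(\pi\cdot\frac{1}{3})$. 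Differentiating this expression of one variable at a time: $d_j$ depends on $\rho_L^s$ only when $L=j$; for $s < s_{d,j}$ the term $\sum_{i=1}^{s_{d,j}}\rho_j^i$ contributes the single summand $\rho_j^s$, yielding $-(Q/R)(1/\rho_j^{s_{d,j}})$, and for $s = s_{d,j}$ one differentiates the three terms in $1/\rho_j^{s_{d,j}}$, picking up $-\sqrt{3/N}\,r_j^f/(\rho_j^{s_{d,j}})^2 + (Q/R)\sum_{i=1}^{s_{d,j}-1}\rho_j^i/(\rho_j^{s_{d,j}})^2$ (the $+s_{d,j}Q/R$ term and, in the sum, the $i=s_{d,j}$ term cancel against each other in the quotient). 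This gives~\eqref{eq:death_derivative}.

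The main obstacle is a subtle one hidden in the first step: the indices $s_{b,L}$ and $s_{d,L}$ are themselves functions of $(\a^1,\dots,\a^R)$, jumping when $\tilde{c}_L(sQ/R)$ crosses the threshold $2r_L^f\sin(\cdot)$, so $b_{L,n_L}$ and $d_{L,n_L}$ are a priori only piecewise-defined. I would argue these pieces glue to continuous (indeed locally Lipschitz) functions on the constraint set — at a crossing point the two adjacent linear pieces of $\tilde{c}_L$ agree, so the two formulas for $\tilde{c}_L^{-1}$ agree there — and that the graph decomposes into finitely many semialgebraic cells indexed by the value of $(s_{b,L},s_{d,L})_L$, so definability is preserved. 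This requires only the standard stability properties of definable/semialgebraic families (finite unions, preimages of definable sets under definable maps), collected in Appendix~\ref{appendix:o-minimal}, but it is the one place where the argument is genuinely more delicate than in the filtration ray case and deserves to be spelled out rather than dismissed as "similar to Lemma~\ref{lemma:B}."
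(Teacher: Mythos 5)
Your proposal is correct and follows essentially the same route as the paper: the paper's own proof writes out only the $n=1$ differentiation of $b_j=0$ and $d_j=\tilde{c}_j^{-1}\left(\sqrt{3}\,r_j^f\right)$ and defers the convergence claim and the general gradient formulas to ``similarly to Lemma \ref{lemma:B} and the proof of Theorem \ref{thm:gradient},'' which is exactly the reduction you carry out, just in more detail (your remark that the indices $s_{b,L},s_{d,L}$ depend on the parameters and that the piecewise formulas glue continuously at the crossing points is a genuine subtlety the paper leaves implicit). The only blemish is a trivial slip: $n_j=1=2k+1$ corresponds to $k=0$, not $k=1$, in the death-time threshold $2r_j^f\sin\left(\pi\frac{k+1}{2k+3}\right)=2r_j^f\sin\left(\frac{\pi}{3}\right)$, though the formula you actually use is the correct one.
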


\begin{proof}
Here, we only prove \eqref{eq:birth_derivative} and \eqref{eq:death_derivative}. Others can be proved similarly to the proof of Theorem \ref{thm:gradient}. Since $s_{b,L} =0, b_{L,n_L}(\boldsymbol{\rho}^1, \cdots, \boldsymbol{\rho}^R) = 0$ for every $L$ and $\sum\limits_L n_L = 1$. Set $b_j = \max \bigl\{ b_{j, n_j}(\boldsymbol{\rho}^1, \cdots, \boldsymbol{\rho}^R): n_j =1 \text{ and } n_1, \cdots, n_{j-1}, n_{j+1}, \cdots, n_N = 0 \bigr\} = 0$ and $d_j = \min \left\{ d_{j, n_j}(\boldsymbol{\rho}^1, \cdots, \boldsymbol{\rho}^R): n_j =1 \text{ and } n_1, \cdots, n_{j-1}, n_{j+1}, \cdots, n_N = 0 \right\} = \sqrt{{3 \over N}}r_j^f {1 \over \rho_j^{s_{d,j}}} - {Q \over R} {\sum\limits_{i=1}^{s_{d,j}} \rho_j^i \over \rho_j^{s_{d,j}}} + s_{d,j} {Q \over R}$. Differentiating these with respect to $\rho_L^s$ yields \eqref{eq:birth_derivative} and \eqref{eq:death_derivative}.
\end{proof}

\begin{remark}[Comparison with vectorization of multi-parameter persistent homology]
\label{rmk:comparison}
The multi-parameter persistence image and multi-parameter persistence landscape are vectorization techniques derived from the rank invariant of multi-parameter persistent homology, as discussed in \cite{carriere2020multiparameter} and \cite{vipond2020multiparameter}, respectively. Our method shares a common feature with previous studies in that it considers rank invariants (or fibered barcodes). However, there is a distinguishing factor in how we select one-parameter filtrations in the multi-parameter space. Previous research chose one-parameter filtration rays without using machine learning and applied a multi-parameter version of the vectorization method. In contrast, our approach defines a single `optimal' filtration curve (see Figure \ref{fig:comparison}) and utilizes filtration learning to identify this optimal curve. This approach allows for a more concise analysis of the data and facilitates handling $n$-parameter persistence for $n \ge 2$. Furthermore, our method can be extended to find multiple optimized one-parameter filtration curves by generalizing $B$ and it can be combined with the multi-parameter version of the vectorization method. Additionally, we provide an analytical example of the multi-parameter persistence image and landscape in Appendix \ref{app:vectorization}.
\end{remark}

\begin{figure}[h]
\centering
\begin{minipage}{0.4\textwidth}
\centering
\includegraphics[width=\linewidth]{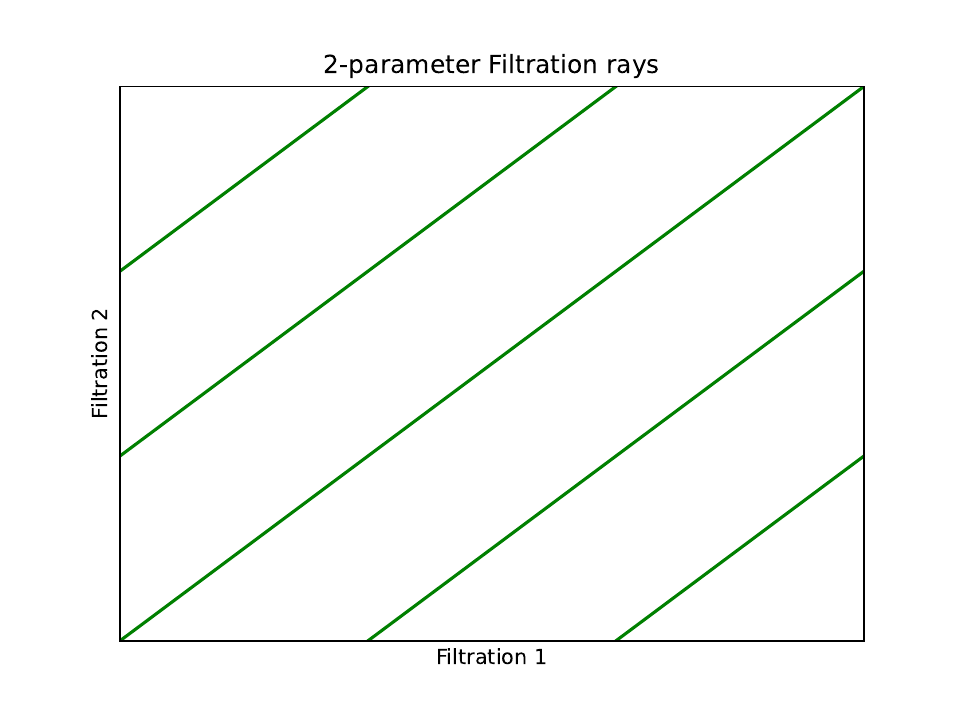}
\end{minipage}
\begin{minipage}{0.1\textwidth}
\centering
$\Longrightarrow$
\end{minipage}
\begin{minipage}{0.4\textwidth}
\centering
\includegraphics[width=\linewidth]{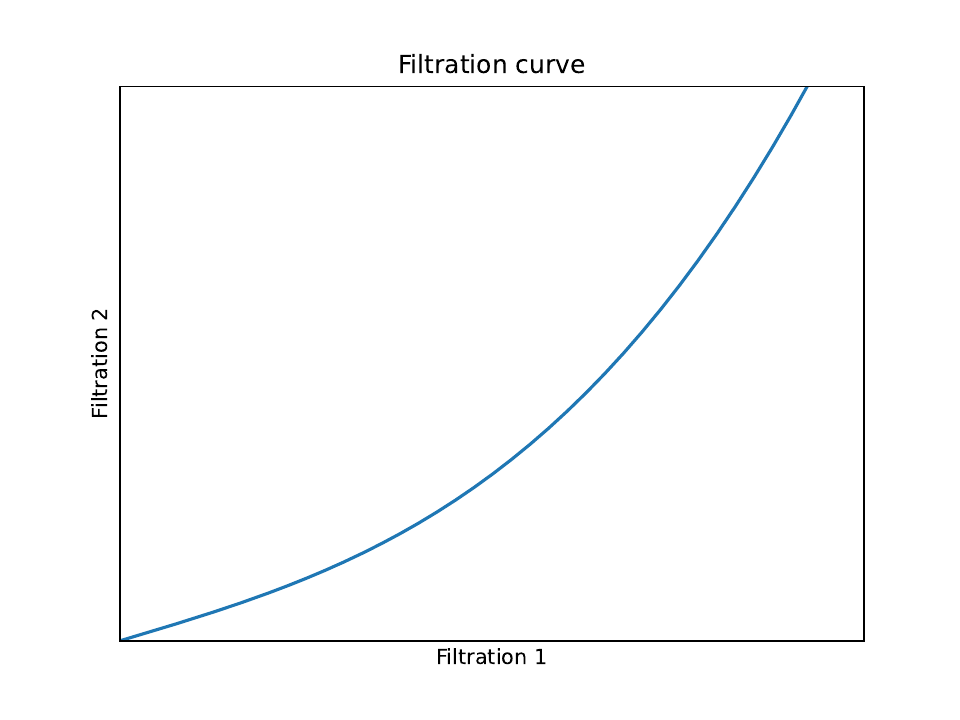}
\end{minipage}
\caption{The multi-parameter persistence image and landscape consider multiple lines in the multi-parameter space (left), while our model pursues finding a single optimal filtration curve (right).}
\label{fig:comparison}
\end{figure}

\begin{section}{Numerical examples}
\label{sec:examples}
In this section, we verify our proposed filtration learning model using time series data. In Examples \ref{Ex:toy1} and \ref{Ex:toy2}, we demonstrate the efficiency of the proposed method of the filtration ray learning and filtration curve learning using synthetic data. In Example \ref{example:ucr}, we compare the performances of our model filtration curve learning and other methods. In Example \ref{Ex:time}, we compare the computational efficiency of using the gradient formula versus not using the gradient formula in the backward step.

Algorithm \ref{alg:filtration main} summarizes the pipeline for solving the optimization problem in \eqref{eq:objective2}. For these examples, we assume that the time-series data are evenly sampled with the length of $\mathfrak{n}$.

\begin{algorithm}[H]
\begin{algorithmic}[1]
\caption{Filtration learning with EMPH (FL-EMPH) }
\label{alg:filtration main}

\STATE \textbf{Input:}  $f_1, \cdots ,f_S : \left\{ {2\pi i \over \mathfrak{n}} : 0 \le i \le \mathfrak{n}-1\right\} \rightarrow \mathbb{R}$ (time-series data) 
\vspace{0.1cm}
\STATE \textbf{Variables:} $E\in \mathbb{Z}_{\ge 0}$ (iterations), $N \in \mathbb{Z}_{\ge 0}$ (degree of truncated Fourier series), $R \in \mathbb{Z}_{\ge 0}$ (number of line segments), $n \in \mathbb{Z}_{\ge 0}$ (dimension of barcode) and $h \in \mathbb{Z}_{\ge 0}$ (number of hidden layers), $M>m>0 $ (maximum and minimum of $\a^s$)

\STATE \textit{Initialize the neural network and direction vectors:} $\mathbf{W}_0 := \left( W^{[1]}_{0}, \cdots, W^{[h]}_{0}\right), \mathbf{b}_0 := \left( b^{[1]}_{0}, \cdots, b^{[h]}_{0}\right)$ and $\a^s_0 $ for $s=1, \cdots, R$.

\FOR{$k = 0,\cdots ,E$}
\FOR{$j = 1,\cdots ,S$}
\STATE Calculate the Fourier transforms $\hat{f}_j$ via fast Fourier transform.

\STATE Calculate $\mathsf{bcd}_{n}^{\mathcal{R},\tilde{\mathbf{c}}}(\Psi_{f_j})$ using Theorem \ref{thm:piecewise barcode}.

\STATE Calculate persistence image of $\mathsf{bcd}_{n}^{\mathcal{R},\tilde{\mathbf{c}}}(\Psi_{f_j})$.

\ENDFOR 

\STATE
Input the persistence image into the neural network. 
\STATE
Update the parameters:
\begin{align*}
W^{[l]}_{k+1} &= W^{[l]}_{k} - \alpha_k {\partial \mathcal{L} \over \partial {W^{[l]}_{k}}}, \\
b^{[l]}_{k+1} &= b^{[l]}_{k} - \alpha_k 
{\partial \mathcal{L} \over \partial {b^{[l]}_{k}}}, \\
\a_{k+{1 \over 2}}^s &= \a_k^s - \alpha_k
{\partial \mathcal{L} \over \partial {\a_{k}^s}}, \\
\a_{k+1}^s &= \mathsf{proj}_{\R^N_{m,M}} \left(\boldsymbol{\rho}\left(\a_{k+{1 \over 2}}^s\right)\right).
\end{align*}

\ENDFOR

\RETURN{} Class prediction
\end{algorithmic}
\end{algorithm}

\begin{example}
\label{Ex:toy1}
Consider the following two time-series, $g_1$ and $g_2$, each with the length of $36$, evenly sampled from $\cos t$ and $\cos 5t$, respectively, in the interval $[0,2\pi]$. Given a time-series $f$, we construct EMPH for the two-parameter filtration, which consists of $\boldsymbol{\epsilon} = (\epsilon_1, \epsilon_5) \mapsto \mathcal{R}_{\epsilon_1}\left(r_1^f \cdot \mathbb{S}^1\right) \times \mathcal{R}_{\epsilon_5}\left(r_5^f \cdot \mathbb{S}^1\right)$. Note that $\boldsymbol{\epsilon}$ is in the frequency domain, which consists of the first and fifth Fourier modes. If we employ sliding window embedding or a diagonal ray for EMPH, $g_1$ and $g_2$ are indistinguishable. In order to distinguish these two, we set up the experiment as follows. We first generate $100$ time-series data for each and used $80\%$ of them for training, with the remainder for testing. We use one hidden layer with the width of 50 for 10,000 epochs. The learning rate of 0.001, the bandwidth of 0.05 for the persistence image, and the resolution of $10^2$ are used for
\begin{equation}
  \left\{  \begin{array}{l}  g^\epsilon_1  = \cos t + 1\cdot \epsilon(0, 1) \\
      g^\epsilon_2 = \cos 5t + 1\cdot \epsilon(0, 1)
      \end{array}
      \right. \nonumber , 
\end{equation}
where $\epsilon(0, 1)$ represents the Gaussian standard normal errors. 

We compare two methods: one that updates the neural network only with the direction vector fixed and the other that utilizes the filtration ray learning where both the neural network and the direction vector are updated. Our experiment shows that the first method achieves accuracy of 52.5\% while the second method achieves perfect accuracy of 100\%. Figure \ref{fig:toy} illustrates how the loss function (left figure) and direction vectors (right figure) behaves at different epochs.

\begin{figure}[h]
  \centering
  \begin{minipage}{0.45\textwidth}
  \centering
    \includegraphics[width=\linewidth]{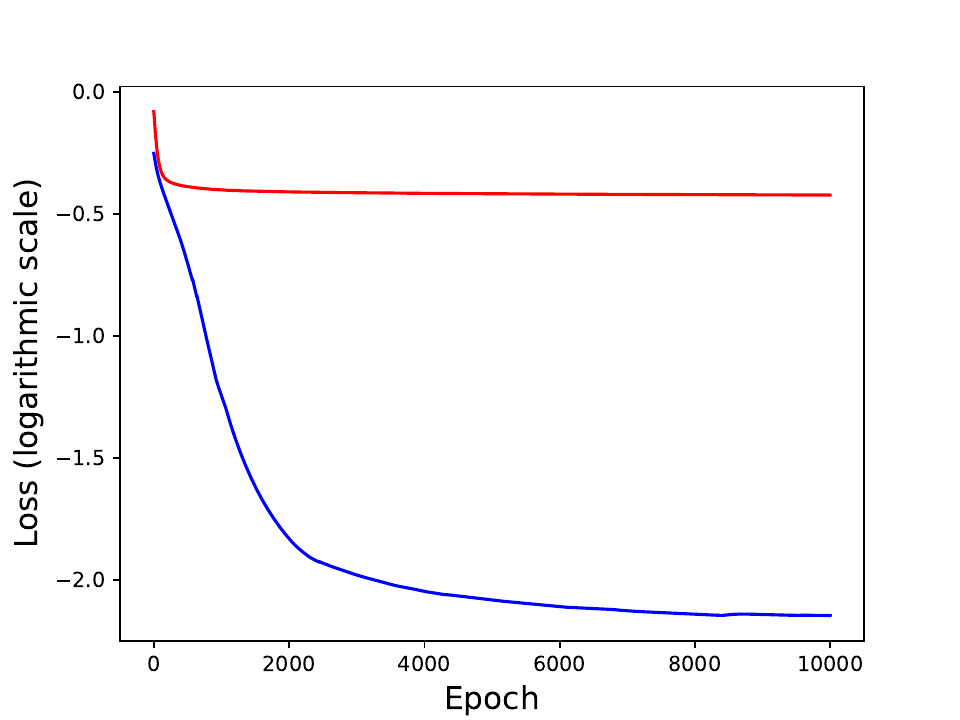}
    \end{minipage}
    \begin{minipage}{0.54\textwidth}
    \includegraphics[width=\linewidth]{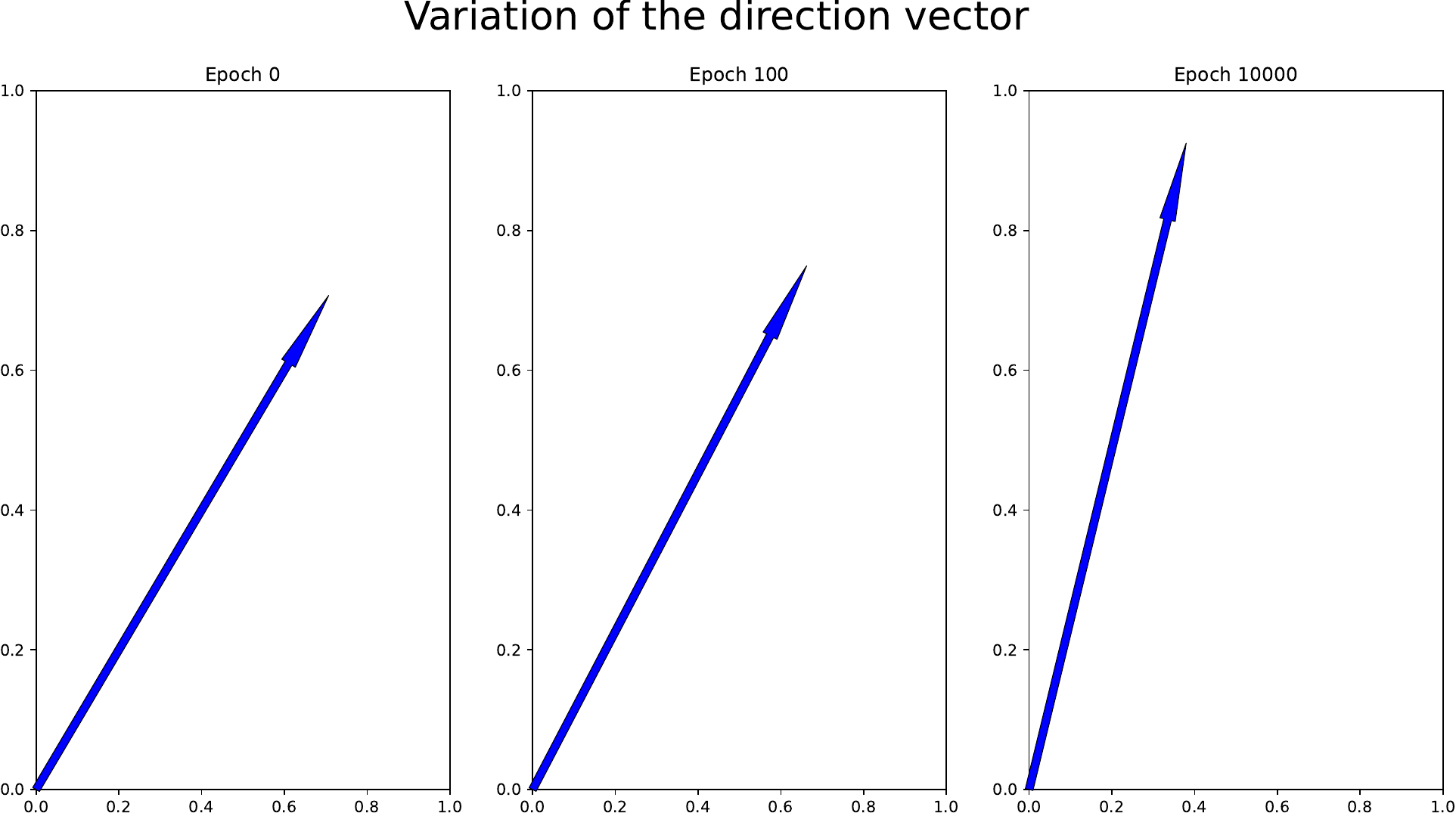}
    \end{minipage}
  \caption{Left: Loss function in logarithmic scale versus epoch number. Red (accuracy: 52.5 \%) solid line represents the case that only the neural network is updated. Blue (accuracy: 100 \%) solid line represents the case that both the neural network and the direction vector are updated. Right: Updated direction vectors at different epochs. The rightmost direction vector is obtained at the last epoch.}
\label{fig:toy}
\end{figure}
\end{example}

\begin{example}
\label{Ex:toy2}
Let $g_{11} = \cos t, g_{12} = \cos 2t, g_2 = 2 \cos t$ and $g_3 = 2 \cos 2t$. Given a time-series $f$, we construct EMPH for the two-parameter filtration, which consists of $\boldsymbol{\epsilon} = (\epsilon_1, \epsilon_2) \mapsto \mathcal{R}_{\epsilon_1}\left(r_1^f \cdot \mathbb{S}^1\right) \times \mathcal{R}_{\epsilon_2}\left(r_2^f \cdot \mathbb{S}^1\right)$. Note that $\boldsymbol{\epsilon}$ is in the frequency domain, which consists of the first and second Fourier modes. To treat $g_{11}$ and $g_{12}$ as identical while distinguishing $g_2$ and $g_3$, simply selecting a single ray is not sufficient. Opting for the diagonal ray enables us to treat $g_{11}$ and $g_{12}$ as the same, yet, it fails to differentiate between $g_2$ and $g_3$. On the other hand, selecting a different ray makes it difficult to treat $g_{11}$ and $g_{12}$ as equivalent. In \cite{kim2022exact}, a curved filtration approach was proposed. In this case, we address the classification problem by introducing a piecewise linear curve. In practical classification problems, filtration ray learning can also classify the time-series. By slightly perturbing the diagonal ray, it is possible to train the classifier considering that $g_{11}$ and $g_{12}$ are similar, while $g_2$ and $g_3$ are different. The goal of the following experiment is to determine whether filtration curve learning can enhance classification accuracy compared to filtration ray learning or selecting a ray with prior knowledge without filtration learning.

The experimental setup is as follows: We generated 50 time-series data for each type 

\begin{equation}
  \left\{  \begin{array}{l}  g^\epsilon_{11}  = \cos t + 1\cdot \epsilon(0, 1) \\
      g^\epsilon_{12} = \cos 2t + 1\cdot \epsilon(0, 1) \\
       g^\epsilon_{2} = 2\cos t + 1\cdot \epsilon(0, 1) \\
        g^\epsilon_{3} = 2\cos 2t + 1\cdot \epsilon(0, 1)
      \end{array}
      \right. \nonumber , 
\end{equation}
where $\epsilon(0, 1)$ represents the Gaussian standard normal errors.

The classification problem was divided into three classes, where the first class consists of $g^\epsilon_{11}$ and $g^\epsilon_{12}$, the second class consists of $g^\epsilon_{2}$ and the third class consists of $g^\epsilon_{3}$. For the experiment, 80\% of the data was used for training and the rest for testing. We use a 1-hidden layer with the width of 50 for 10,000 epochs and the resolution of the persistence image of $10^2$. The 5-fold cross-validation was used with the learning rates of $\left\{ 0.01, 0.005 \right\}$, the bandwidth of $\left\{1,0.5,0.1\right\}$ and the number of the line segments $\left\{1,2,3\right\}$.

We compare the following three methods: The first method does not involve filtration learning. It uses a fixed specific initial ray and updates only the neural network. The second method, a filtration ray learning, updates both the neural network and the direction vector. The third method, a filtration curve learning, updates both the neural network and the filtration curve. The differences in accuracy are as follows: the first method achieves an accuracy of 89.5\%, the second method achieves an accuracy of 88.5\%, and the third method achieves an accuracy of 91.5\%. These results represent the mean accuracy across 5 experiments. Figure \ref{fig:toy2} illustrates the loss and the transition of the filtration ray and filtration curve. As we can see, filtration curve learning improves classification accuracy significantly compared to filtration ray learning or selecting filtration rays using prior knowledge without filtration learning.

\begin{figure}[h]
  \centering
  \begin{minipage}{0.45\textwidth}
  \centering
    \includegraphics[width=\linewidth]{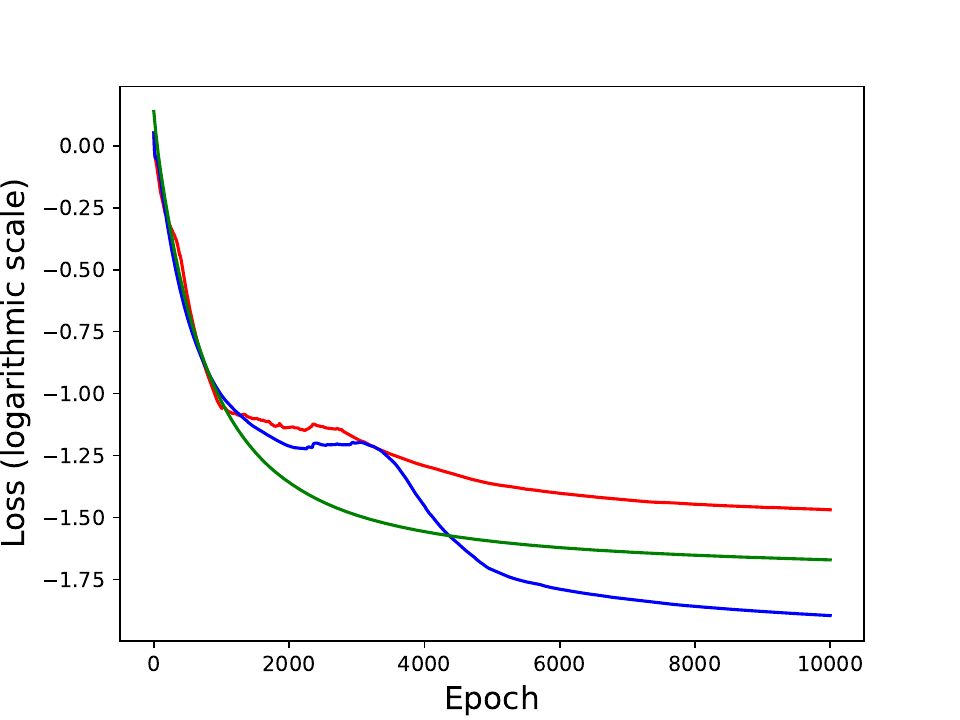}
    \end{minipage}
    \begin{minipage}{0.54\textwidth}
    \includegraphics[width=\linewidth]{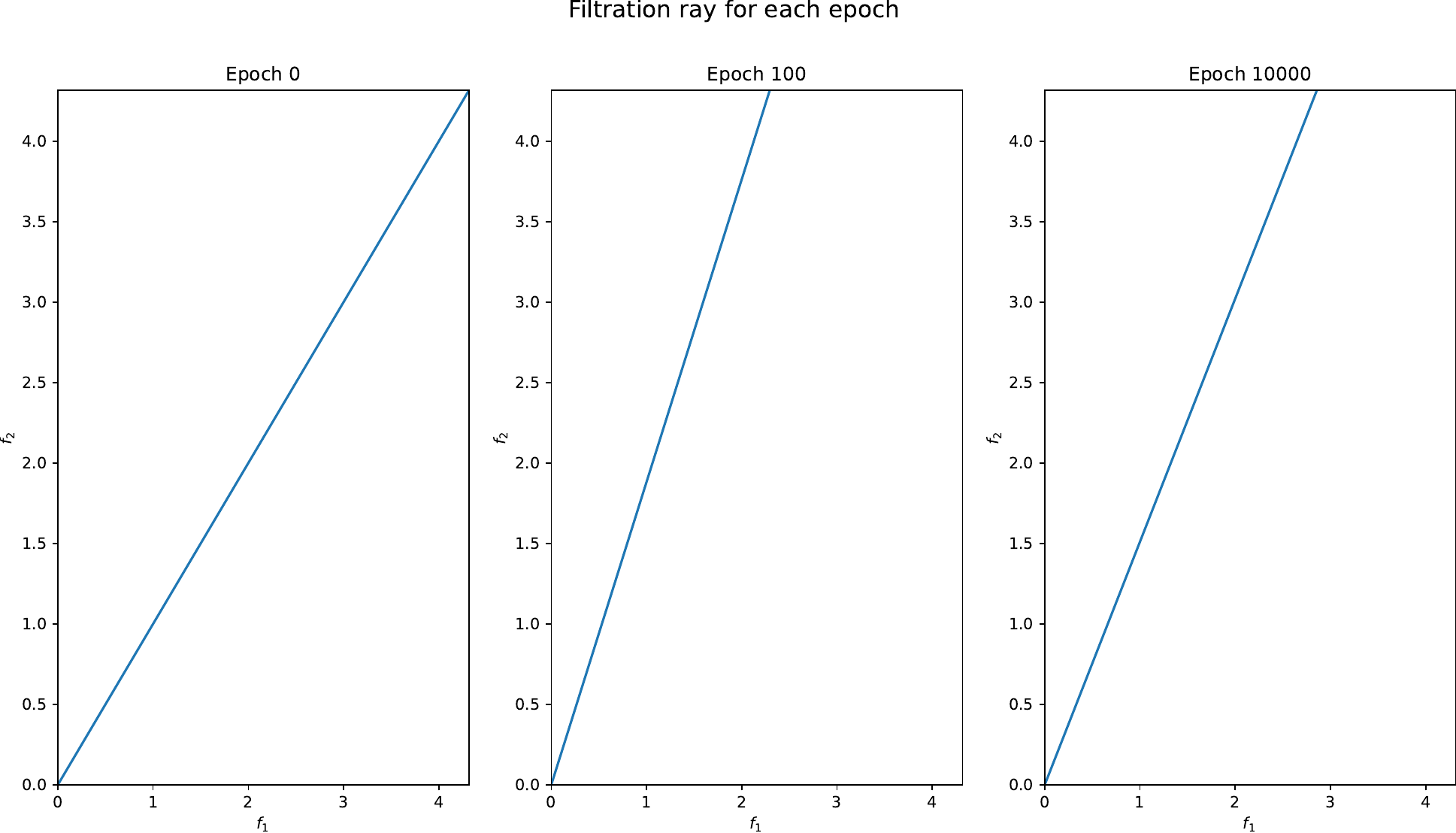}
    \end{minipage}
    \begin{minipage}{\textwidth}
      \includegraphics[width=\linewidth]{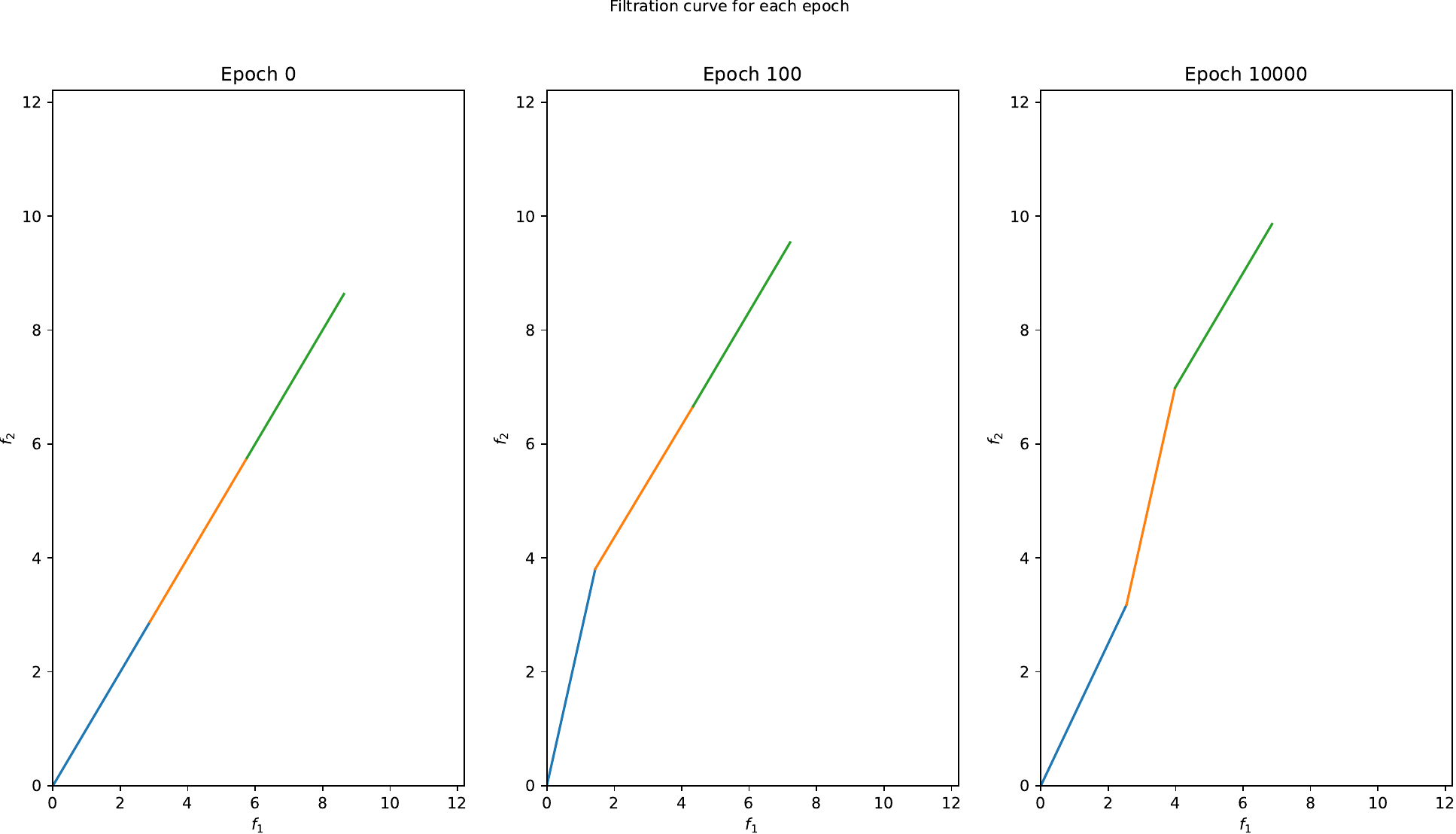}
    \end{minipage}
    \caption{Schematic illustration in Example \ref{Ex:toy2}. Top left: Loss function in logarithmic scale. Green solid line represents the case where only the neural network is updated, with a fixed curve $c(t)= (t,t)$ for $0 \le t \le \sqrt{3}$ and $c(t) = (\sqrt{3}, \sqrt{3}) + 	\left( \sqrt{{3 \over 2}}(t-\sqrt{3}), \sqrt{{1 \over 2}} (t- \sqrt{3}) \right)$ for $t \ge \sqrt{3}$. Red represents the case where both the neural network and filtration ray are updated, while blue represents the case that updates both the neural network and the three direction vectors $\a^1, \a^2$ and $\a^3$ (piecewise linear curve). Top right: Variation of the direction vector at different epochs. Bottom: Variation of the piecewise linear filtration curve with different epochs.}
    \label{fig:toy2}
\end{figure}
\end{example}

\begin{example}
\label{example:ucr}
In this example, we implement our theoretical approach on benchmarking data and evaluate its performance compared to other methods. Six datasets were selected from the UCR Time Series Classification Archive (\url{
https://www.cs.ucr.edu/~eamonn/time_series_data_2018/}, \cite{dau2019ucr}
). Our proposed model is referred to as Filtration learning within EMPH (FL-EMPH). We use two hidden layers with the widths of 50 and 10, respectively, train for 10,000 epochs and set the resolution of the persistence image to $10^2$. The 5-fold cross-validation was performed with the learning rates of $\left\{ 0.01, 0.005 \right\}$, bandwidth choices of $\left\{1,0.5,0.1\right\}$ and varying numbers of line segments in filtration curve learning to $\left\{1,2,3,5\right\}$. Table \ref{table:accuracy} shows the classification accuracy. The table provides the mean accuracy across 5 experiments. We compare our method (FL-EMPH) with TDA and other methods. These include 
\begin{enumerate}
    \item \textbf{EMPH}: The EMPH model employs topological featurization as in EMPH + PI and updates only the neural network, excluding filtration learning.
    \item \textbf{MPI}: Multi-parameter Persistence Image, \cite{carriere2020multiparameter}
    \item \textbf{PI}: Persistence Image, \cite{adams2017persistence}
    \item \textbf{MPL}: Multi-parameter Persistence Landscape, \cite{vipond2020multiparameter} 
    \item \textbf{PL}: Persistence Landscape, \cite{bubenik2015statistical}
    \item \textbf{MPK}: Multi-parameter Persistence Kernel, \cite{carriere2017sliced}
    \item \textbf{PSSK}: Persistence Scale Space Kernel, \cite{reininghaus2015stable} 
    \item \textbf{B1}: Euclidean nearest neighbor with 1-nearest neighbor classification
    \item \textbf{B2}: Dynamic time warping 1-nearest neighbor classification
    \item \textbf{B3}: Constant window width 1-nearest neighbor classification
\end{enumerate}
B1, B2 and B3 are introduced in \cite{dau2019ucr}.

\begin{table}[h]
\centering
    \includegraphics[width=1\linewidth]{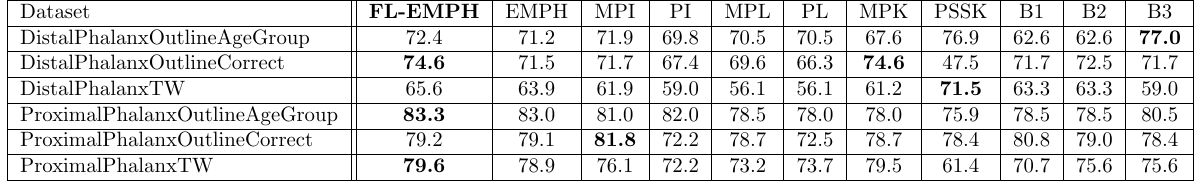}
\caption{Classification accuracy (\%). The results from MPI to B3 are cited in \cite{carriere2020multiparameter}. Those in boldface represent the best accuracy for the given dataset.}
\label{table:accuracy}
\end{table}

From Table \ref{table:accuracy}, it is evident that filtration learning effectively enhances the accuracy of classification. Moreover, when compared to other methods, the FL-EMPH approach further motivates its application to more complex datasets.
\end{example}

\begin{example}
\label{Ex:time}  
In this example, we compare the computational efficiency of updating the direction vector $\a$ with the gradient formula versus without the gradient formula. We use a dataset consisting of 400 time-series data, each with the length of 80. We employ one single hidden layer with different widths and resolutions of the persistence image. All computations were performed using an i7-13700KF CPUs, 32GB of DDR5 RAM, and an RTX 4070 Ti GPUs.  

We describe the following two models:
\begin{enumerate}
    \item Direct update of the direction vector: The first model utilizes the GPUs for both the forward and backward processes. It updates the direction vector $\a$ directly with the derivative ${\partial \mathcal{L} \over \partial \a}$ as shown in \eqref{eq:gradients}, while the neural network's parameters are updated through PyTorch's automatic differentiation.

\item Update with automatic differentiation: The second model performs both the forward and backward operations on GPUs, with every parameter being updated via PyTorch's automatic differentiation.
\end{enumerate}

Table \ref{table:computation time} shows the computation times for a total of 100 epochs for each model with different widths and resolutions. In our experiments, the direct updating model is superior in all cases, which means that using the gradient formula is beneficial for reducing computation times.

\begin{table}[h]
\centering
\renewcommand{\arraystretch}{1.5}
{\small\begin{tabular}{|m{3cm}|m{3cm}|m{3cm}|}
\hline
\multicolumn{1}{|c|}{\makecell{(Width of hidden layer, \\ Resolution)}} & \multicolumn{1}{c|}{Direct update} & \multicolumn{1}{c|}{Update w/ automatic differentiation}  \\ \hline 
\multicolumn{1}{|c|}{$(100,10^2)$} & \multicolumn{1}{c|}{\textbf{18.181 ± 0.047}} & \multicolumn{1}{c|}{70.72 ± 0.075}  \\ \hline
\multicolumn{1}{|c|}{$(100,50^2)$} & \multicolumn{1}{c|}{\textbf{19.643 ± 0.077}} & \multicolumn{1}{c|}{70.592 ± 0.315} \\ \hline
\multicolumn{1}{|c|}{$(100,100^2)$} & \multicolumn{1}{c|}{\textbf{27.209 ± 0.211}} & \multicolumn{1}{c|}{131.672 ± 0.04}  \\ \hline
\multicolumn{1}{|c|}{$(1000,10^2)$} & \multicolumn{1}{c|}{\textbf{18.757 ± 0.061}} & \multicolumn{1}{c|}{70.005 ± 0.06}  \\ \hline
\multicolumn{1}{|c|}{$(1000,50^2)$} & \multicolumn{1}{c|}{\textbf{19.679 ± 0.063}} & \multicolumn{1}{c|}{69.969 ± 0.125}  \\ \hline
\multicolumn{1}{|c|}{$(100,100^2)$} & \multicolumn{1}{c|}{\textbf{29.878 ± 0.04}} & \multicolumn{1}{c|}{131.843 ± 0.065}  \\ \hline
\multicolumn{1}{|c|}{$(10000,10^2)$} & \multicolumn{1}{c|}{\textbf{19.078 ± 0.069}} & \multicolumn{1}{c|}{69.858 ± 0.036}  \\ \hline
\multicolumn{1}{|c|}{$(10000,50^2)$} & \multicolumn{1}{c|}{\textbf{27.495 ± 0.037}} & \multicolumn{1}{c|}{70.43 ± 0.042}  \\ \hline
\multicolumn{1}{|c|}{$(10000,100^2)$} & \multicolumn{1}{c|}{\textbf{85.928 ± 0.028}} & \multicolumn{1}{c|}{153.225 ± 0.041} \\ \hline
\end{tabular}}
\caption{Average computation times (second) for 100 epochs of filtration ray learning. The shortest time is highlighted in a boldface.}
\label{table:computation time}
\end{table}

\end{example}
\end{section}

\begin{section}{Conclusion}
\label{sec:conclusion}
In this paper, we proposed an optimization problem to find an optimal filtration curve within EMPH for the classification of time-series data. Our experiments demonstrate that focusing on finding an optimal filtration curve is meaningful in terms of classification accuracy, and generalizing in a higher-dimensional parameter space is easier than considering multiple lines within the parameter space. Moreover, filtration learning involves updating filtration, calculating the persistence image and updating neural network for each epoch; this process is computationally intensive. However, using the exact formula of the barcode, we can deduce the gradient ${\partial \mathcal{L} \over \partial \a}$. Therefore, we can directly update $\a$ without automatic differentiation. Our approach effectively reduces the computational complexity of both forward and backward steps, facilitating efficient filtration learning. However, considering the still high computational costs, combining our model with a method to reduce the dimensionality of the multi-parameter space seems to be a promising approach. Our model can enhance its performance by incorporating various techniques from deep learning such as minibatch learning, dropout and early stopping, which we will apply to real-world data. Additionally, although our experiments so far have utilized only one-dimensional persistent homology, we will extend our experiment to include higher-dimensional persistent homology.
\end{section}

\vskip .1in 
{\bf Acknowledgement}
This work was supported by NRF of Korea under the grant number 2021R1A2C3009648 and POSTECH Basic Science Research Institute under the NRF grant number 2021R1A6A1A1004294412. This work is also supported partially by NRF grant by the Korea government (MSIT) (RS-2023-00219980).

\appendix

\section{o-minimal structure}
\label{appendix:o-minimal}
This section is based on \cite{van1998tame, coste2000introduction, loi2003tame, davis2020stochastic}.

The motivation of o-minimal structure is consider the class of nice subsets of $\R^n$ and class of nice functions between the nice set. Nice means the class closed under elementary logical (unions, intersections, complement), geometric and topological operation (closure, cartesian products, project into lower dimensional subspace). One of the example of the class of nice set is semialgebraic sets. In the name o-minimal, `o' stands for order.

\begin{definition}[Semialgebraic set, Semialgebraic function]
A subset $X \subseteq \R^n$ is semialgebraic if $X$ is a finite union of $\left\{ x \in \R^n : P(x) =0 \text{ and } Q_1(x), \cdots, Q_l(x)>0 \right\}$, where $l \in \mathbb{Z}_{\ge 0}$ and $P, Q_1 , \cdots,Q_l \in \R[x_1, \cdots, x_n]$. For semialgebraic sets $A \subseteq \R^m$ and $B \subseteq \R^n$, a function $f : A \rightarrow B$ is called semialgebraic if its graph is a semialgebraic subset of $\R^m \times \R^n$.     
\end{definition}

Boolean algebra is an axiomatic extension of power set.
\begin{definition}[Boolean algebra, \cite{halmos2009introduction}]
Boolean algebra is a nonempty set $\mathcal{A}$ with binary operations $\lor$ and $\land$ on $\mathcal{A}$ and a unary operation $'$ and two distinguished elements $0$ and $1$ in $\mathcal{A}$ satisfying suitable operation rules such as in a power set of nonempty set.
\begin{table}[h]
\centering
\begin{tabular}{lll}
$\mathcal{A}$ & $\longleftrightarrow$ & $\mathcal{P}(X)$ \\
$\lor$        & $\longleftrightarrow$ & $\bigcup$                       \\
$\land$       & $\longleftrightarrow$ & $\bigcap$                       \\
$'$           & $\longleftrightarrow$ & $^c$                            \\
$0$           & $\longleftrightarrow$ & $\emptyset$                     \\
$1$           & $\longleftrightarrow$ & $X$                            
\end{tabular}
\caption{Correspondence between Boolean algebra and a power set of nonempty set.}
\end{table}
\end{definition}

o-minimal structure is an axiomatic extension of semialgebraic set.

\begin{definition}
o-minimal structure on the real number field $(\R, +, \cdot)$ is a sequence of $\mathcal{D} = (\mathcal{D}_n)_{n \in \mathbb{N}}$ such that
\begin{enumerate}
    \item $\mathcal{D}_n$ is a boolean algebra of subsets of $\R^n$. i.e.,$\emptyset, \R^n \in \mathcal{D}_n$, and if $A,B \in \mathcal{D}_n$, then $A \cup B \in \mathcal{D}_n$ and $\R^n \setminus A \in \mathcal{D}_n$.
    \item If $A \in \mathcal{D}_n$, then $A \times \R$ and $\R \times A \in \mathcal{D}_{n+1}$.
    \item If $A \in \mathcal{D}_{n+1}$, then $\pi(A) \in \mathcal{D}_n$, where $\pi: \R^{n} \times \R \rightarrow \R^n$ is the projection on the first $n$ coordinates.
    \item $\mathcal{D}_n$ contains $\left\{ x \in \R^n : p(x)=0 \right\}$ for every polynomial $p \in \R[x_1, \cdots,x_n]$.
    \item The elements of $\mathcal{D}_1$ are exactly the finite unions of intervals and points. (This is called o-minimal constraint)
\end{enumerate}
\end{definition}

\begin{definition}
We say $A \subseteq \R^m$ is definable set if $A \in \mathcal{D}_n$. For a definable set $A$, a function $f : A \rightarrow \R^n$ is called definable function if its graph is definable set in $\R^{m+n}$.    
\end{definition}

\begin{proposition}
Suppose $A\subseteq \R^m$ is definable set and $f : A \rightarrow \R^n$ is a definable function. Then the followings are hold. 

\begin{enumerate}
    \item Let $A \in \mathcal{D}_m$ and $B \in \mathcal{D}_n$. Then closure, interior of $A$ is also definable, i.e., $cl(A), int(A) \in \mathcal{D}_m$. Additionally, $A \times B \in \mathcal{D}_{m+n}$ also holds \cite{van1998tame}.
    \item Sum, product, composition, restriction and inverse of definable functions is also definable. Image of definable function is a definable set. Moreover, $f=(f_1, \cdots, f_n)$ is definable if and only if $f_i$ are definable \cite{van1998tame}.
    \item Each definable set $A \in \R^m$ has a finite partition $A = C_1 \cup \cdots \cup C_k$ into $C^1$-cells and $f|_{\mathcal{C}_i}$ are $C^1$ \cite{van1998tame}.
    \item There is an o-minimal structure that contains the graph of the exponential function and semialgebraic sets \cite{davis2020stochastic, wilkie1996model}.
\end{enumerate}
\end{proposition}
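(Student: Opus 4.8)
Looking at the final statement, it is a proposition listing four closure properties of definable sets and functions in an o-minimal structure. Each part is cited to an external reference (\cite{van1998tame} or \cite{davis2020stochastic, wilkie1996model}), so my proposal is really a plan for assembling these standard facts from the defining axioms rather than inventing anything new.

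\textbf{Overall strategy.} The plan is to derive each assertion directly from the five axioms of an o-minimal structure, exploiting closure under Boolean operations, cartesian products with $\R$, and projection. The unifying technique is to express the set or graph one wants to show is definable as a projection of, or a finite Boolean combination of, sets already known to be definable, possibly after adjoining dummy coordinates. Throughout I will freely use that finite intersections are definable (since $A \cap B = \R^n \setminus ((\R^n \setminus A) \cup (\R^n \setminus B))$ by axiom (1)) and that $A \times \R^k$ and $\R^k \times A$ are definable by iterating axiom (2).

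\textbf{Part (1) and the product claim.} For closure under closure and interior, I would write $cl(A)$ using a first-order description, $x \in cl(A) \iff \forall \varepsilon > 0 \ \exists y \in A \ \lVert x - y \rVert < \varepsilon$, and realize this via projections and complements of the definable set $\{(x,y,\varepsilon) : y \in A, \ \lVert x-y\rVert^2 < \varepsilon^2, \ \varepsilon > 0\}$; the quantifiers correspond to projection (axiom (3)) and complementation (axiom (1)). Interior then follows from $int(A) = \R^m \setminus cl(\R^m \setminus A)$. For $A \times B$ with $A \in \mathcal{D}_m$, $B \in \mathcal{D}_n$, I would write $A \times B = (A \times \R^n) \cap (\R^m \times B)$, both factors being definable by iterated axiom (2), and intersect.

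\textbf{Part (2): operations on functions.} Here the key move is always to manipulate graphs. For composition $g \circ f$, I would describe its graph as $\{(x,z) : \exists y \ (x,y) \in \mathrm{graph}(f) \text{ and } (y,z) \in \mathrm{graph}(g)\}$ and obtain it by intersecting suitably padded copies of the two graphs and projecting out $y$. Sums and products reduce to composition with the definable addition and multiplication maps (whose graphs are polynomial zero sets, definable by axiom (4)). Restriction is intersection of the graph with $A'\times\R^n$; inverse is the coordinate swap of the graph, which is definable because permuting coordinates is a composition of projections and products; and the component characterization $f = (f_1,\dots,f_n)$ definable iff each $f_i$ is definable again follows by projecting the graph onto the relevant coordinates in one direction and by intersecting padded graphs in the other. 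That the image is definable is immediate: it is the projection of the graph.

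\textbf{Part (3) and (4).} The cell decomposition statement in (3) is the genuinely deep theorem; I would not reprove it but cite it as the Cell Decomposition Theorem from \cite{van1998tame}, stating only that definability of $A$ and $f$ feeds into it. Likewise, part (4) is Wilkie's theorem that $\R_{\exp}$ is o-minimal \cite{davis2020stochastic, wilkie1996model}, which I would quote verbatim. The main obstacle is therefore not the routine parts (1)-(2), which are bookkeeping with projections and Boolean operations, but rather making honest use of the cell decomposition result in (3) and the model-theoretic input of (4); for a self-contained argument these must be imported as black boxes, and the proposal explicitly does so.
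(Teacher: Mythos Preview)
Your proposal is correct, but the paper itself does not give any proof of this proposition: each of the four items is simply stated with a citation to \cite{van1998tame} or \cite{davis2020stochastic, wilkie1996model}, and no argument is supplied. In that sense you have gone well beyond the paper's ``proof,'' which is purely a reference to the literature. Your sketches for parts (1) and (2) are the standard derivations from the o-minimal axioms (expressing closure/interior via quantifiers and realizing quantifiers by projection and complementation; manipulating graphs for operations on functions), and your decision to import the Cell Decomposition Theorem and Wilkie's theorem as black boxes for (3) and (4) matches exactly what the paper does implicitly by citing those sources. There is no gap; if anything, your write-up supplies the connective tissue the paper omits.
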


\begin{proposition}[\cite{ta2011introduction}]
\label{prop:semi properties}
Let $f: X \rightarrow \mathbb{R}$ be a semialgebraic function. If $f(x) \ne 0$ for every $x \in X$, then ${1 \over f}$ is semialgebraic, and if $f \ge 0$, then $\sqrt{f}$ is also a semialgebraic function.   
\end{proposition}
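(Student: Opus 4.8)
The plan is to reduce both assertions to the Tarski--Seidenberg projection theorem, that is, to the fact that the image of a semialgebraic set under a coordinate projection is again semialgebraic (this is exactly axiom (3) in the definition of an o-minimal structure, applied to the o-minimal structure of semialgebraic sets, whose o-minimality and closure under polynomial zero sets are classical). Write $X \subseteq \mathbb{R}^n$ and let $\Gamma_f = \{(x,t) \in X \times \mathbb{R} : t = f(x)\} \subseteq \mathbb{R}^{n+1}$ be the graph of $f$, which is semialgebraic by hypothesis. In each of the two cases I would construct an auxiliary semialgebraic subset of $\mathbb{R}^{n+2}$ by intersecting the cylinder $\{(x,y,t) : (x,t) \in \Gamma_f\}$ (semialgebraic, being the preimage of $\Gamma_f$ under the linear map $(x,y,t)\mapsto(x,t)$) with the zero set of one extra polynomial, and then project away the auxiliary coordinate $t$ to recover the graph of the function in question.

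For the reciprocal: set $S = \{(x,y,t) \in \mathbb{R}^{n+2} : (x,t) \in \Gamma_f \text{ and } yt - 1 = 0\}$, which is semialgebraic as a finite intersection of semialgebraic sets. Let $\pi(x,y,t) = (x,y)$. Then $\pi(S) = \Gamma_{1/f}$: if $(x,y) \in \pi(S)$ there is a $t$ with $f(x) = t$ and $yt = 1$, which forces $x \in X$, $t = f(x) \neq 0$, and $y = 1/f(x)$; conversely, for any $x \in X$ the hypothesis gives $f(x) \neq 0$, so $(x, 1/f(x), f(x)) \in S$. By Tarski--Seidenberg, $\Gamma_{1/f} = \pi(S)$ is semialgebraic, i.e. $1/f$ is a semialgebraic function. (If one reads the earlier closure proposition for definable functions as including the multiplicative inverse, this is immediate; the argument here is the self-contained version.)

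For the square root, under the assumption $f \ge 0$: set $T = \{(x,y,t) \in \mathbb{R}^{n+2} : (x,t) \in \Gamma_f,\ y \ge 0,\ y^2 - t = 0\}$, again semialgebraic since $\{y \ge 0\}$ and $\{y^2 = t\}$ are semialgebraic. With the same $\pi$ one checks $\pi(T) = \Gamma_{\sqrt f}$, where $f \ge 0$ is exactly what guarantees that for each $x \in X$ the value $t = f(x) \ge 0$ admits a (unique) $y \ge 0$ with $y^2 = t$, namely $y = \sqrt{f(x)}$. Tarski--Seidenberg then gives that $\pi(T) = \Gamma_{\sqrt f}$ is semialgebraic.

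The only genuinely nontrivial ingredient is the Tarski--Seidenberg projection theorem; everything else is bookkeeping. The one place warranting a word of care is verifying that $S$ and $T$ really are semialgebraic (preimages of semialgebraic sets under polynomial maps are semialgebraic, and finite intersections of semialgebraic sets are semialgebraic) and that the projections recover \emph{exactly}, not merely contain, the target graphs --- which is precisely where the hypotheses $f \neq 0$ everywhere, respectively $f \ge 0$, enter. A fully parallel alternative would be to write $\Gamma_{1/f}$ and $\Gamma_{\sqrt f}$ via the first-order formulas $\exists t\,(\varphi_f(x,t) \wedge yt = 1)$ and $\exists t\,(\varphi_f(x,t) \wedge y \ge 0 \wedge y^2 = t)$, with $\varphi_f$ defining $\Gamma_f$, and appeal to quantifier elimination for real closed fields.
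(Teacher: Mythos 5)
Your argument is correct. Note, however, that the paper itself offers no proof of this proposition: it is imported verbatim as a cited result from the semialgebraic-geometry literature, so there is no in-paper argument to compare against. What you have written is the standard proof one would find in that reference --- encode the graph of the new function by an existentially quantified polynomial condition over the graph of $f$, then eliminate the quantifier via Tarski--Seidenberg (equivalently, project away the auxiliary coordinate). Your two auxiliary sets $S$ and $T$ are indeed semialgebraic (the cylinder over $\Gamma_f$ is $\Gamma_f \times \mathbb{R}$ up to a permutation of coordinates, and $\{y \ge 0\}$ and $\{yt - 1 = 0\}$, $\{y^2 - t = 0\}$ are semialgebraic by definition), and your verification that the projections equal the target graphs exactly --- using $f \ne 0$ everywhere, respectively $f \ge 0$ together with the uniqueness of the nonnegative square root --- is where the hypotheses are genuinely used, as you correctly flag. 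The only ingredient you take as given is the projection theorem, which is the appropriate black box here; nothing further is needed.
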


\begin{proposition}[\cite{ta2011introduction}]
\label{prop:min semi}
If $f$ and $g$ are semialgebraic, then $\min\left\{ f,g \right\}$ is also semialgebraic. 
\end{proposition}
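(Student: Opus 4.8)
The plan is to reduce everything to the single fact that the collection of semialgebraic sets is closed under finite Boolean operations, finite products (cylindrification), and projections (Tarski--Seidenberg), together with the defining equivalence that a map is semialgebraic precisely when its graph is a semialgebraic set. Thus the whole task is to exhibit the graph of $\min\{f,g\}$ as something built from the graphs of $f$ and $g$ by these permitted operations.

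First I would fix $f,g : X \to \R$ with $X \subseteq \R^n$ semialgebraic and $\Gamma_f, \Gamma_g \subseteq \R^{n}\times\R$ their (semialgebraic) graphs, and set $h := \min\{f,g\}$. Then
$$\Gamma_h = \bigl\{(x,y) : y = f(x),\ f(x) \le g(x)\bigr\} \ \cup\ \bigl\{(x,y) : y = g(x),\ g(x) \le f(x)\bigr\} =: A \cup B,$$
so it suffices to show $A$ and $B$ are semialgebraic, and by the symmetry between $f$ and $g$ it is enough to treat $A$.

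Next I would work in $\R^n \times \R \times \R$ with coordinates $(x,y,z)$ and consider
$$\widetilde{A} \ :=\ \bigl(\Gamma_f \times \R\bigr)\ \cap\ \bigl\{(x,y,z) : (x,z)\in\Gamma_g\bigr\}\ \cap\ \bigl\{(x,y,z) : z - y \ge 0\bigr\}.$$
This set is semialgebraic: $\Gamma_f\times\R$ is semialgebraic by cylindrification; the middle set is a coordinate-permuted, cylindrified copy of $\Gamma_g$, and permutations of coordinates preserve semialgebraicity; $\{z-y\ge 0\} = \{z-y=0\}\cup\{z-y>0\}$ is a finite union of basic semialgebraic sets; and finite intersections of semialgebraic sets are semialgebraic. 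Since $A$ is exactly the image of $\widetilde{A}$ under the projection $(x,y,z)\mapsto(x,y)$, it is semialgebraic by Tarski--Seidenberg. Running the same argument with the roles of $f$ and $g$ interchanged shows $B$ is semialgebraic, hence $\Gamma_h = A\cup B$ is semialgebraic, i.e. $h=\min\{f,g\}$ is a semialgebraic function.

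The only mildly delicate point --- and the nearest thing to an obstacle --- is the coordinate bookkeeping when the two graphs are cylindrified into a common ambient space, and confirming that the order constraint $\{z\ge y\}$ is genuinely semialgebraic; beyond that the argument is a direct application of the closure axioms. Alternatively, one can avoid projections altogether using the identity $\min\{f,g\} = \tfrac12\bigl(f+g-|f-g|\bigr)$ together with $|f-g| = \sqrt{(f-g)^2}$, which is semialgebraic by Proposition~\ref{prop:semi properties} once one notes that sums, differences and products of semialgebraic functions are semialgebraic; I would record this as a shortcut but favor the graph argument since it invokes only the closure properties already at hand.
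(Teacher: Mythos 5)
Your argument is correct. Note, however, that the paper does not prove this proposition at all: it is stated in Appendix \ref{appendix:o-minimal} with a citation to \cite{ta2011introduction} and no proof, so there is nothing to compare against line by line. On its own merits, your graph-and-projection argument is sound: the decomposition $\Gamma_{\min\{f,g\}} = A \cup B$, the lift $\widetilde{A} \subseteq \R^n \times \R \times \R$, and the appeal to closure under products, coordinate permutations, Boolean operations, and Tarski--Seidenberg projection are all standard and correctly deployed; the projection of $\widetilde{A}$ onto $(x,y)$ is indeed exactly $A$. It is worth pointing out that the shortcut you mention only in passing, $\min\{f,g\} = \tfrac12\bigl(f+g-\lvert f-g\rvert\bigr)$ with $\lvert f-g\rvert = \sqrt{(f-g)^2}$ handled by Proposition \ref{prop:semi properties}, is precisely the identity the paper itself uses to prove the locally Lipschitz analogue in Proposition \ref{prop:locally Lipschitz2}, so that route would have been the more economical one in the context of this paper (it needs only that sums, products, and composition of semialgebraic functions are semialgebraic, facts already recorded in the appendix); your projection argument is more self-contained but does more work than the surrounding text requires.
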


\begin{proposition}[\cite{loi2003tame}]
\label{prop:semi-definable}
Every semialgebraic set is definable in any o-minimal structure.    
\end{proposition}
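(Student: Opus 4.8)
The plan is to unwind the definition of a semialgebraic set and verify that each of its building blocks lands in an arbitrary o-minimal structure $\mathcal{D} = (\mathcal{D}_n)_n$, using only the defining axioms. Fix such a $\mathcal{D}$ and a semialgebraic set $X \subseteq \R^n$, written as a finite union $X = \bigcup_{i=1}^k S_i$ with $S_i = \{x \in \R^n : P_i(x) = 0 \text{ and } Q_{i,1}(x), \dots, Q_{i,l_i}(x) > 0\}$ for polynomials $P_i, Q_{i,j} \in \R[x_1,\dots,x_n]$. Since each $\mathcal{D}_n$ is a Boolean algebra, it is closed under finite unions and finite intersections, so it suffices to prove that every set of the form $\{P = 0\}$ and every set of the form $\{Q > 0\}$ belongs to $\mathcal{D}_n$; then $S_i$ is a finite intersection of such sets and $X$ a finite union of the $S_i$.

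The zero-set case is given outright by the axiom that $\mathcal{D}_n$ contains $\{x \in \R^n : P(x) = 0\}$ for every polynomial $P$. The only real obstacle is producing the strict inequality $\{Q > 0\}$ from the axioms, which supply zero sets but no inequalities; the standard device is to encode the inequality with an auxiliary variable and then project. Concretely, I would set
\[
A := \{(x,t) \in \R^n \times \R : t^2 Q(x) - 1 = 0\},
\]
which is the zero set of a polynomial in the $n+1$ variables $(x_1,\dots,x_n,t)$, hence $A \in \mathcal{D}_{n+1}$. The projection axiom then gives $\pi(A) \in \mathcal{D}_n$, where $\pi$ forgets the last coordinate; and $x \in \pi(A)$ iff $t^2 Q(x) = 1$ has a solution $t$, i.e. iff $Q(x) > 0$. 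Thus $\{x : Q(x) > 0\} = \pi(A) \in \mathcal{D}_n$.

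Putting the pieces together, each $S_i = \{P_i = 0\} \cap \bigcap_{j=1}^{l_i}\{Q_{i,j} > 0\}$ is a finite intersection of members of $\mathcal{D}_n$, hence lies in $\mathcal{D}_n$, and $X = \bigcup_{i=1}^k S_i \in \mathcal{D}_n$, so $X$ is definable; since $\mathcal{D}$ was arbitrary this holds in every o-minimal structure. I expect the one point needing care in the writeup to be exactly the $t^2 Q(x) = 1$ trick for the strict inequality — the rest is routine bookkeeping with the Boolean-algebra and projection axioms, and it is worth noting that the o-minimality constraint (axiom 5) itself plays no role here, since every semialgebraic set is already definable in any structure satisfying the remaining axioms.
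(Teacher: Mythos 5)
The paper offers no proof of this proposition at all --- it is stated as a citation to the literature (Loi's notes on tame geometry), so there is nothing internal to compare against. Your argument is a correct, self-contained derivation from the axioms as the paper states them, and the one genuinely non-routine step is handled properly: the axioms supply only polynomial zero sets, Boolean operations, cylinders, and projections, so the strict inequality $\{Q>0\}$ must be manufactured, and your encoding $A=\{(x,t): t^2Q(x)-1=0\}$ followed by projection does exactly that ($\exists t,\ t^2Q(x)=1$ holds precisely when $Q(x)>0$, since $t^2Q(x)\le 0<1$ whenever $Q(x)\le 0$). The reduction of a general semialgebraic set to finite unions of finite intersections of $\{P=0\}$ and $\{Q_j>0\}$ then goes through by the Boolean-algebra axiom (closure under intersection following from closure under union and complement via De Morgan). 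Your closing observation that axiom (5), the o-minimality constraint on $\mathcal{D}_1$, is never used is also correct and worth keeping: the statement really is about arbitrary structures on $(\mathbb{R},+,\cdot)$ containing the algebraic sets, and o-minimality is irrelevant to it. What your approach buys over the paper's bare citation is that the reader sees the projection trick explicitly, which is the same mechanism underlying the quantifier-elimination view of semialgebraic sets (Tarski--Seidenberg); what the citation buys is brevity, since the result is entirely standard.
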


\section{Propositions for showing $\mathcal{L}$ is locally Lipschitz}
\label{appendix:propositions}
\begin{proposition}
For locally Lipschitz functions $f : \mathbb{R}^n \rightarrow \mathbb{R}^m$ and $g : \mathbb{R}^m \rightarrow \mathbb{R}$, $g \circ f$ is also locally Lipschitz.
\end{proposition}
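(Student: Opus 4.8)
Local Lipschitzness is a pointwise-local property, so the plan is to fix an arbitrary $x_0 \in \mathbb{R}^n$ and produce a neighborhood of $x_0$ on which $g \circ f$ satisfies a Lipschitz estimate. The only real content is to arrange that the image under $f$ of a small ball around $x_0$ lands inside a ball around $f(x_0)$ on which $g$ is Lipschitz; once that is done the composed estimate is immediate by chaining the two Lipschitz inequalities.

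\textbf{Key steps.} First, since $f$ is locally Lipschitz at $x_0$, choose $r > 0$ and $L_f \ge 0$ so that $\lVert f(x) - f(x') \rVert \le L_f \lVert x - x' \rVert$ for all $x, x' \in B(x_0, r)$; in particular $f$ is continuous at $x_0$. Second, since $g$ is locally Lipschitz at the point $f(x_0)$, choose $s > 0$ and $L_g \ge 0$ so that $\lvert g(y) - g(y') \rvert \le L_g \lVert y - y' \rVert$ for all $y, y' \in B(f(x_0), s)$. Third, pick $r' > 0$ with $r' < r$ and (if $L_f > 0$) also $r' < s / L_f$; then for $x \in B(x_0, r')$ we get $\lVert f(x) - f(x_0) \rVert \le L_f \lVert x - x_0 \rVert < s$, i.e. $f(B(x_0, r')) \subseteq B(f(x_0), s)$ (when $L_f = 0$ this inclusion is trivial since $f$ is constant on $B(x_0,r)$). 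Fourth, for any $x, x' \in B(x_0, r')$, combine the two estimates: $\lvert g(f(x)) - g(f(x')) \rvert \le L_g \lVert f(x) - f(x') \rVert \le L_g L_f \lVert x - x' \rVert$. Hence $g \circ f$ is Lipschitz on $B(x_0, r')$ with constant $L_g L_f$, and since $x_0 \in \mathbb{R}^n$ was arbitrary, $g \circ f$ is locally Lipschitz.

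\textbf{Main obstacle.} There is no substantive obstacle here; the proof is entirely routine. The one point that requires a moment's care is the third step — guaranteeing $f$ maps a sufficiently small neighborhood of $x_0$ into the neighborhood of $f(x_0)$ where $g$'s Lipschitz bound is valid — which is handled by the continuity of $f$ (itself a consequence of local Lipschitzness) together with the explicit radius choice $r' < s/L_f$, with the degenerate case $L_f = 0$ noted separately.
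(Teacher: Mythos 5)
Your proposal is correct and takes essentially the same route as the paper: obtain a neighborhood of $f(x_0)$ on which $g$ is Lipschitz, use the continuity of $f$ (a consequence of its local Lipschitzness) to pull that neighborhood back to a small ball around $x_0$, and chain the two constants to get $L_g L_f$. If anything, your version is slightly more careful than the paper's, since you verify the Lipschitz estimate for all pairs $x,x'$ in the small ball (via the explicit radius $r' < s/L_f$ and the degenerate case $L_f=0$), whereas the paper only writes the inequalities against the base point $x$.
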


\begin{proof}
Let $a = f(x)$. Since $g$ is locally Lipschitz, there is a neighborhood $U_{a}$ of $a$ such that $g|_{U_{a}}$ is Lipschitz. i.e., there is $L_1 \in \mathbb{R}$ such that for every $b \in U_a$, $\left| g(a) - g(b) \right| \le L_1 \left| a - b \right|$. Given that $f$ is continuous, there is a neighborhood $V_x$ of $x$ such that $f(V_x) \subseteq U_{a}$. Since $f$ is locally Lipschitz, there is a $L_2 \in \mathbb{R}$ and a neighborhood $W_x$ such that $\left| f(x) - f(y) \right| \le L_2 \left| x-y \right|$ for every $y \in W_x$. Consequently, for any $y \in V_x \bigcap W_x$, it follows that $\left| g(f(x)) - g(f(y)) \right| \le L_1 \left| f(x) - f(y) \right| \le L_1L_2 \left| x - y \right|$.
\end{proof}

\begin{proposition}[Exercise 4.2.10, \cite{kumaresan2005topology}]
\label{prop:locally Lipschitz}
If $X$ is a locally compact metric space, then $f$ is locally Lipschitz if and only if it is Lipschitz continuous on every compact subset of $X$.    
\end{proposition}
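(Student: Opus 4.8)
The plan is to prove the two implications separately; local compactness of $X$ enters only in one of them. For the direction ``$f$ Lipschitz on every compact subset $\Rightarrow$ $f$ locally Lipschitz'': given $x \in X$, use local compactness to choose a compact neighborhood $K$ of $x$; by hypothesis $f|_K$ is Lipschitz, so $f$ is Lipschitz on a neighborhood of $x$, which is exactly the assertion that $f$ is locally Lipschitz at $x$. This direction is immediate.

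For the converse, suppose $f$ is locally Lipschitz and let $K \subseteq X$ be compact. For each $x \in K$ pick an open neighborhood $U_x$ and a constant $L_x > 0$ with $|f(y) - f(z)| \le L_x\, d(y,z)$ for all $y, z \in U_x$. The family $\{U_x\}_{x \in K}$ is an open cover of the compact metric space $K$, hence admits a finite subcover $U_{x_1}, \dots, U_{x_n}$; put $L := \max_i L_{x_i}$. Apply the Lebesgue number lemma to this finite cover of $K$ to get $\delta > 0$ such that any two points of $K$ at distance less than $\delta$ lie together in some $U_{x_i}$. Since $f$ is continuous (being locally Lipschitz) and $K$ is compact, $f$ is bounded on $K$, say $|f| \le M$ there.

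Now take $y, z \in K$. If $d(y,z) < \delta$, then $y, z \in U_{x_i}$ for some $i$ and $|f(y) - f(z)| \le L_{x_i}\, d(y,z) \le L\, d(y,z)$; if $d(y,z) \ge \delta$, then $|f(y) - f(z)| \le 2M \le (2M/\delta)\, d(y,z)$. Hence $f$ is Lipschitz on $K$ with constant $\max\{L, 2M/\delta\}$, which finishes the proof. The only point requiring care is precisely this last passage: one cannot just take the maximum of the local Lipschitz constants, since two points of $K$ need not lie in a common cover element; the Lebesgue number lemma disposes of the nearby pairs, while the boundedness of $f$ on $K$ absorbs the far-apart pairs. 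Note that beyond compactness of $K$ no further hypothesis on $X$ is used in this direction, so local compactness is genuinely needed only for the first implication.
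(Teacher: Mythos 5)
Your argument is correct. Note that the paper itself supplies no proof of this proposition --- it is stated as a citation to Exercise 4.2.10 of \cite{kumaresan2005topology} --- so there is nothing to compare against; on its own merits your write-up is the standard and complete argument. The easy direction correctly isolates where local compactness is used (a compact neighborhood $K$ of $x$ contains an open neighborhood on which the restriction of $f|_K$ is Lipschitz), and in the hard direction you correctly identify and close the one genuine pitfall: taking the maximum of the local constants over a finite subcover does not by itself bound $|f(y)-f(z)|$ for $y,z$ in different cover elements, and your combination of the Lebesgue number lemma for nearby pairs with the bound $2M/\delta$ for pairs at distance at least $\delta$ (using boundedness of the continuous $f$ on the compact $K$) is exactly the right fix. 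The only cosmetic remark is that the Lebesgue number lemma should be applied to the cover $\{U_{x_i}\cap K\}$ of the compact metric space $K$, which is what you implicitly do.
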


\begin{proposition}
\label{prop:locally Lipschitz2}
If $f_1, f_2$ are locally Lipschitz, then $\min \left\{ f_1, f_2 \right\}$ and $\max \left\{ f_1, f_2 \right\}$ are locally Lipschitz. 
\end{proposition}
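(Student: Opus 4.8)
The plan is to reduce the claim to the stability of local Lipschitzness under addition, scalar multiplication, and the absolute value map, via the elementary identities
\[
\max\{f_1,f_2\} = \tfrac12\bigl(f_1 + f_2 + |f_1 - f_2|\bigr), \qquad \min\{f_1,f_2\} = \tfrac12\bigl(f_1 + f_2 - |f_1 - f_2|\bigr).
\]
So it suffices to establish three routine facts. First, if $g,h:\mathbb{R}^n\to\mathbb{R}$ are locally Lipschitz then so is $g+h$: given a point $x$, pick a neighborhood $U$ on which both are Lipschitz, namely the intersection of the two neighborhoods furnished by local Lipschitzness (which is again a neighborhood of $x$), with constants $L_g$ and $L_h$; then $g+h$ is Lipschitz on $U$ with constant $L_g+L_h$. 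Second, $cg$ is locally Lipschitz for any $c\in\mathbb{R}$, with local constant $|c|L_g$. Third, $|g|$ is locally Lipschitz: on any neighborhood where $g$ is $L_g$-Lipschitz we have $\bigl||g(x)|-|g(y)|\bigr|\le |g(x)-g(y)|\le L_g|x-y|$ by the reverse triangle inequality.

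Combining these, $f_1-f_2$ is locally Lipschitz, hence so is $|f_1-f_2|$, hence so is $f_1+f_2\pm|f_1-f_2|$, and finally multiplication by $\tfrac12$ preserves the property; this proves that both $\max\{f_1,f_2\}$ and $\min\{f_1,f_2\}$ are locally Lipschitz. Alternatively, one can argue directly from the pointwise bound $\bigl|\min\{a,b\}-\min\{c,d\}\bigr|\le\max\{|a-c|,|b-d|\}$ (and the analogous one for $\max$), which on a common neighborhood where $f_1$ and $f_2$ are $L_1$- and $L_2$-Lipschitz respectively yields $\max\{L_1,L_2\}$ as a Lipschitz constant for $\min\{f_1,f_2\}$ and for $\max\{f_1,f_2\}$.

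There is no real obstacle here; the only point requiring a word of care is that local Lipschitzness of each $f_i$ is a priori witnessed on possibly different neighborhoods of a given point, so one must intersect them before making any comparison — but a finite intersection of neighborhoods is again a neighborhood, so this is immediate. If one prefers, on the locally compact space $\mathbb{R}^n$ one may instead invoke Proposition \ref{prop:locally Lipschitz} and verify the statement on compact subsets, where the same estimates apply verbatim.
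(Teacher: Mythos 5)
Your proof is correct and follows essentially the same route as the paper's: both rest on the identity $\min\{f_1,f_2\}=\tfrac12\left(f_1+f_2-|f_1-f_2|\right)$ (and its counterpart for the maximum) together with closure of local Lipschitzness under sums and the absolute value. You simply spell out the routine closure facts and the neighborhood-intersection point that the paper leaves implicit.
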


\begin{proof}
Since the sum of locally Lipschitz is locally Lipschitz and $\left| f_1 - f_2 \right|$ is locally Lipschitz, $\min\left\{ f_1, f_2 \right\} = {f_1 + f_2 - \left| f_1 - f_2 \right| \over 2}$ is locally Lipschitz. Maximum function case can be proved similarly.   
\end{proof}

\section{Vectorization of Multi-parameter persistent homology}
\label{app:vectorization}

In this section, we review the vectorization of multi-parameter persistent homology, using multi-parameter persistence image and landscape as examples, to compare our method. The following definitions are based on the Python code  \footnote{\url{https://github.com/MathieuCarriere/multipers}, \cite{carriere2020multiparameter}} \footnote{\url{https://github.com/OliverVipond/Multiparameter_Persistence_Landscapes}, \cite{vipond2020multiparameter}}.


\begin{definition}[Two-parameter persistence image, \cite{carriere2020multiparameter}]
\label{def:MPI}
Consider a grid $\mathcal{G} = \left\{\mathbf{g}_{i,j} \right\}_{0\le i,j \le r-1}$, where each $\mathbf{g}_{i,j}$ is defined as $\left(m_1 + i \frac{M_1-m_1}{r-1}, m_2 + j \frac{M_2-m_2}{r-1}\right)$. Denote $\ell(x,v)$ as a ray passing through $x$ with direction vector $v$. Let $L = \biggl\{ \ell	\left( (m_1 + i \delta, m_2 ) , \left({\sqrt{2} \over 2}, {\sqrt{2} \over 2}\right) \right) : 0 \le i \le \lfloor {\# \text{rays} \over 2} \rfloor \biggr\} \bigcup \left\{ \ell	\left( (m_1, m_2 + j \delta ) , \left({\sqrt{2} \over 2}, {\sqrt{2} \over 2}\right) \right): 0 \le j \le \lfloor {\# \text{rays} \over 2} \rfloor \right\}$ be a collection of rays, where $\delta = {(M_1-m_1) + (M_2-m_2) \over \# \text{rays}}$. For a Two-parameter persistent homology $\mathsf{MP}$, we construct an $r^2$-dimensional vector $\mathcal{I} = (\mathcal{I}_{i,j})_{0 \le i,j \le r-1}$ where
$$\mathcal{I}_{i,j} = \sum\limits_{I \in D_L(\mathsf{MP})} w_1 (I) \left( w_2(\ell^*) \exp \left( - { \underset{\ell \in I}{\min} \lVert \mathbf{g}_{i,j}, \ell \rVert^2 \over \sigma^2} \right) \right),$$ and in this context, $D_L(\mathsf{MP})$ is the vineyard decomposition of the fibered barcodes $\left\{ \mathsf{MP}^{\ell_{i}} \right\}$, $\lVert \mathbf{g}_{i,j}, \ell \rVert$ is defined as $\underset{x \in \ell}{\min} \lVert \mathbf{g}_{i,j} - x \rVert$. Here, $\ell^*$ denotes the ray for which $\underset{\ell \in I}{\min} \lVert \mathbf{g}_{i,j}, \ell \rVert^2$ is achieved, $A(I)$ represents the area of the convex hull formed by the endpoints of all bars in $I$, the function $w_1 : D_L(\mathsf{MP}) \rightarrow \mathbb{R}$ is given by $w_1(I) = \left( \frac{A(I)}{(M_1-m_1)(M_2-m_2)} \right)^q$, and $w_2(\ell)$ is $\min(e^\ell_1 , e^\ell_2)$, where $(e^\ell_1 , e^\ell_2)$ is the direction vector of $\ell$.
   
\end{definition}

\begin{definition}[Two-parameter persistence landscape, \cite{vipond2020multiparameter}]
\label{def:MPL}
All notations are shared in Definition \ref{def:MPI}. Denote rays by
$$\ell_i = \begin{cases}
\ell \left( (m_1 - i {M_1-m_1 \over r-1}, m_2 ) , \left(1,1 \right) \right), & \mbox{if }-(r-1) \le i \le 0 \\
\ell	\Bigl( (m_1, m_2 + i{M_2-m_2 \over r-1} ) , \left(1,1 \right) \Bigr), & \mbox{if } 0< i \le r-1
\end{cases},$$
and endpoint of rays $l_i$ by
$$\o_i = \begin{cases}
(m_1 - i {M_1-m_1 \over r-1}, m_2 ), & \mbox{if }-(r-1) \le i \le 0 \\
(m_1, m_2 + i{M_2-m_2 \over r-1} ), & \mbox{if } 0< i \le r-1
\end{cases}.$$
Define a $k$-th two-parameter persistence landscape $\mathcal{S} = (\mathcal{S}_{i,j})_{0 \le i,j \le r-1} \in \R^{r^2}$, where 

$$\mathcal{S}_{i,j} = k\text{-th largest value of } \bigcup\limits_{s}	\left\{\max \left\{ 0, \min \left\{  {\lVert \mathbf{g}_{i,j} - \o_{j-i} \rVert  - b_s \over \sqrt{2}} , {d_s - \lVert \mathbf{g}_{i,j} - \o_{j-i} \rVert  \over \sqrt{2}} \right\} \right\}\right\},$$
where $(b_s, d_s] \in \mathsf{bcd}^{\ell_{j-i}}_{*}(\mathsf{MP})$.
\end{definition}

Although Definition \ref{def:MPI} and \ref{def:MPL} may seem complicated, we have prepared the following simple example.

\begin{example}
\label{ex:MPI}
Let $\mathcal{G}$ be a grid with $m_1=m_2=0$,$M_1=M_2=2$ and $r=2$ in Definition \ref{def:MPI}. Take the graph $X = \left\{ a,b,c,d,ac,bc \right\}$ and define a two-parameter filtration $F : X \rightarrow \R^2$ with $F(a) = (0,1), F(b) = (1,0), F(c) = (1,1), F(d) = (1,1), F(ac) = (1,1)$, and $F(bc)= (1,1)$. Define a two-parameter persistent homology $\mathsf{MP}$ as $\mathsf{MP}(r_1,r_2):= H_0 	\left(F^{-1}(r_1, r_2)\right)$.

\begin{enumerate}
    \item (Two-parameter persistence image)
Set $q=1$ and $\sigma=1$ in Definition \ref{def:MPI}. The Vineyard Decomposition $D_L(\mathsf{MP}) = \left\{ I_1, I_2 \right\}$, where $I_1$ consists of three blue lines and $I_2$ consists of a single red line as shown in Figure \ref{fig:example MPI}. For the components of the two-parameter persistence image, the calculations are as follows: $A(I_1) = {20 \over 9}, A(I_2) = 0, w_1(I_1) = {5 \over 9}, w_1(I_2) = 0$, and $w_2(\ell^*) = {\sqrt{2} \over 2}$. 
\begin{align*}
\mathcal{I}_{00} &= {5 \over 9} \cdot {\sqrt{2} \over 2} \cdot \exp\left(- {16 \over 9} \right) \approx 0.07, \\
\mathcal{I}_{01} &= {5 \over 9} \cdot {\sqrt{2} \over 2} \cdot \exp\left(- {2 \over 9} \right) \approx 0.31, \\
\mathcal{I}_{10} &= {5 \over 9} \cdot {\sqrt{2} \over 2} \cdot \exp\left(- {2 \over 9} \right) \approx 0.31, \\
\mathcal{I}_{11} &= {5 \over 9} \cdot {\sqrt{2} \over 2} \cdot \exp\left(0 \right) \approx 0.39.
\end{align*}
\begin{figure}[h]
    \centering
    \includegraphics[width=1\linewidth]{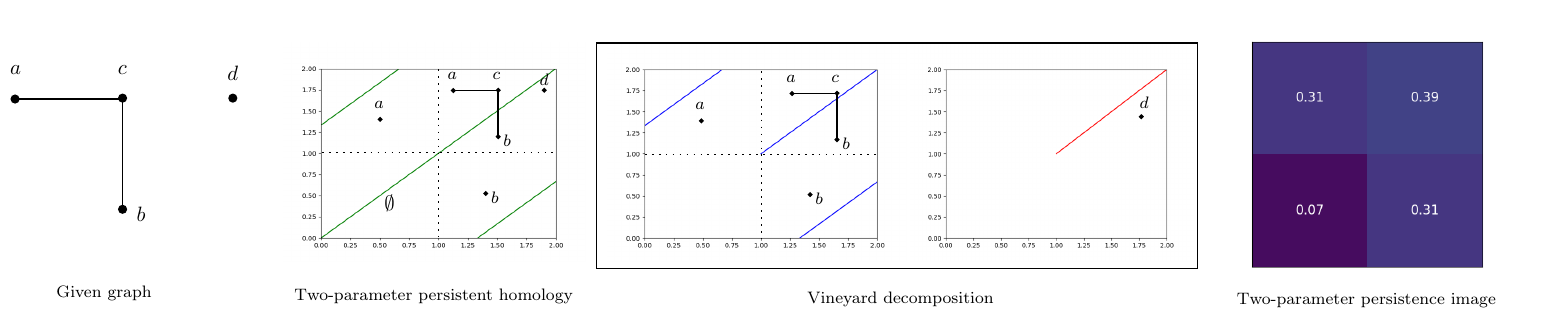}
    \caption{Supplementary of two-parameter persistence image.}
    \label{fig:example MPI}
\end{figure}

\item (Two-parameter persistence landscape)
Set $k=1$ in Definition \ref{def:MPL}. Then $\ell_{-1} = \ell\left( (2,0), (1,1) \right), \ell_{0} = \ell\left( (0,0), (1,1) \right)$ and $\ell_{1} = \ell\left( (0,2), (1,1) \right)$. The fibered barcodes are $\mathsf{bcd}^{\ell_{-1}}_{0}(\mathsf{MP}) = 	\left\{ (0, \infty), (\sqrt{2}, \infty) \right\},$
$\mathsf{bcd}^{\ell_{0}}_{0}(\mathsf{MP}) = 	\left\{(\sqrt{2}, \infty)_{(2)} \right\}$ and $\mathsf{bcd}^{\ell_{1}}_{0}(\mathsf{MP}) = 	\left\{ (0, \infty), (\sqrt{2}, \infty) \right\}$. The distances $\lVert \mathbf{g}_{0,0} - \o_0 \rVert = \lVert \mathbf{g}_{1,0} - \o_{-1} \rVert = \lVert \mathbf{g}_{0,1} - \o_{1} \rVert = 0$ and $\lVert \mathbf{g}_{1,1} - \o_0 \rVert = 2 \sqrt{2}$.

{\scriptsize
\begin{align*}
\mathcal{S}_{00} &= 1\text{-st largest value of } \left\{\max \left\{ 0, \min	\left\{{0-\sqrt{2} \over \sqrt{2}}, \infty \right\}\right\},  \max \left\{ 0, \min	\left\{{0-\sqrt{2} \over \sqrt{2}}, \infty \right\} \right\}\right\} = 0, \\
\mathcal{S}_{01} &= 1\text{-st largest value of } 	\left\{\max \left\{ 0, \min	\left\{{0-0 \over \sqrt{2}}, \infty \right\}\right\},  \max \left\{ 0, \min	\left\{{0-\sqrt{2} \over \sqrt{2}}, \infty \right\} \right\}\right\} = 0, \\
\mathcal{S}_{10} &= 1\text{-st largest value of } 	\left\{\max \left\{ 0, \min	\left\{{0-0 \over \sqrt{2}}, \infty \right\}\right\},  \max \left\{ 0, \min	\left\{{0-\sqrt{2} \over \sqrt{2}}, \infty \right\} \right\}\right\} = 0, \\
\mathcal{S}_{11} &= 1\text{-st largest value of } 	\left\{\max \left\{ 0, \min	\left\{{2\sqrt{2}-\sqrt{2} \over \sqrt{2}}, \infty \right\}\right\},  \max \left\{ 0, \min	\left\{{2\sqrt{2}-\sqrt{2} \over \sqrt{2}}, \infty \right\} \right\}\right\}  = 1.
\end{align*}
}
\begin{figure}[h]
    \centering
    \includegraphics[width=1\linewidth]{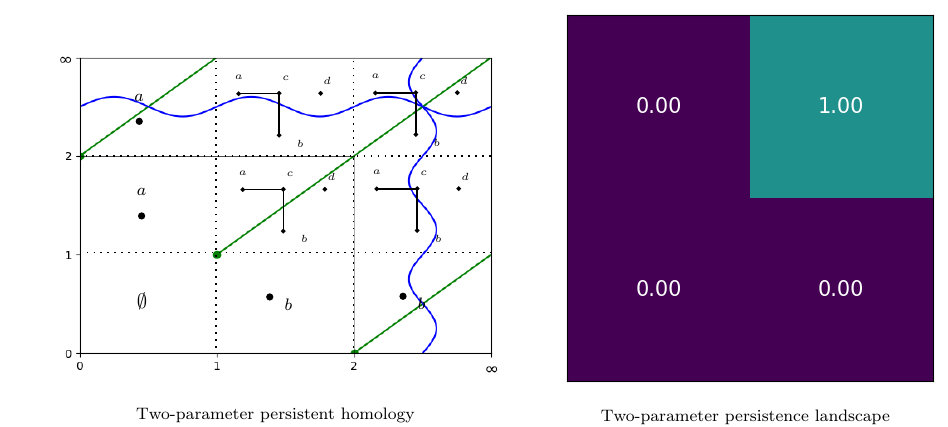}
    \caption{Supplementary of two-parameter persistence landscape.}
    \label{fig:example MPL}
\end{figure}
\end{enumerate}
\end{example}

\vskip .1in
\normalem
\bibliographystyle{ieeetr}
\nocite*{}
\bibliography{reference}

\end{document}